\tikzstyle{vertex}=[circle, draw, inner sep=0pt, minimum size=6pt]
\newcommand{\vertex}{\node[vertex]}
\newtheorem{thm}{Theorem}[section]
\newtheorem{cor}[thm]{Corollary}
\newtheorem{lem}[thm]{Lemma}
\newtheorem{dfn}[thm]{Definition}
\newtheorem{pro}[thm]{Proposition}
\newtheorem{rem}[thm]{Remark}
\newtheorem{exa}[thm]{Example}
\def\e{\mathbf {e}}
\begin{document}

\begin{frontmatter}

\title{On partial cubes, well-graded families and their duals with some applications in graphs}

\author{Alireza Mofidi}

\address{
{\footnotesize Department of Mathematics and Computer Science,}
\\{\footnotesize Amirkabir University of Technology (Tehran Polytechnic), Iran}

and

{\footnotesize School of Mathematics, Institute for Research in Fundamental Sciences {\rm (IPM)}}
\\{\footnotesize{\it P.O. Box {\rm 19395-5746} Tehran, Iran.\\e-mail: mofidi@aut.ac.ir}}
}

\begin{abstract}
Well-graded families, extremal systems and maximum systems (the last two in the sense of VC-theory and Sauer-Shelah lemma on VC-dimension) are three important classes of set systems. 
This paper aims to study the notion of duality in the context of these classes of set systems and then use the obtained results for studying graphs.
More specifically, we are concerned with  
the characterization of the finite set systems 
which themselves and their dual systems are both well-graded, extremal or maximum.
On the way to this goal, 
and maybe also of independent interest,
we study the structure of the well-graded families with the property that the size of the system is not much bigger than the size of
its essential domain, 
that is, the set of elements of the domain which are shattered by the system as single element subsets. 
As another target of the paper, we use the above results to characterize graphs 
whose set systems of open or closed neighbourhoods, cliques or independent sets are well-graded, extremal or maximum.
We clarify the relation of such graphs to the celebrated half-graphs.
Through the paper, we frequently relate our investigations to the VC-dimension of the systems.
Also we use one-inclusion graphs associated to set systems as an important technical tool. 
\end{abstract}

\begin{keyword}

Well-graded families \sep partial cubes \sep dual of set systems \sep VC-dimension \sep 
extremal systems \sep maximum systems.

{\sc MSC codes: 05C75 \sep 05C65 \sep 05D05 \sep 05C20 \sep 03C45}

\end{keyword}

\end{frontmatter}

\section{Introduction}

The general theme of this paper is to study certain interactions between notions of duality in set systems and the properties such as well-gradedness, being maximum or extremal.
We also frequently relate these notions to the VC-dimension of the systems. Moreover, we use the result we obtain for set systems for studying graphs.
The notions of well-gradedness and duality 
are of special interest in this paper and play crucial roles.

Partial cubes are graphs that admit isometric embeddings into hypercube graphs.
These graphs were introduced by Graham and Pollak \cite{GrahamPollak} for modelling interconnection networks 
and were extensively studied afterwards in several papers 
such as \cite{Chepoi,Djokovic,EppsteinThelatticedimension,KlavzarShpectorovTribesofcubicpartialcubes,OvchinnikovPartialcubesStructurescharacterizations,WinklerIsometricembeddingsinproductsofcompletegraphs}.
Nowadays, the investigation of hypercubes and partial cubes 
has found many applications in different parts of combinatorics and established links to other fields of mathematics as well as computer science.
From another point of view, there is a well known class of set systems, called well-graded families (or isometrically-embedded systems), which are defined as the systems whose one-inclusion graphs (see Definition \ref{defoneincgraph}) are partial cube. 
Note that the study of set systems (or hypergraphs) is one of the core areas of research focus in combinatorics.
Well-graded families 
are of special interest in this paper.
The notion of well-gradedness 
was introduced in \cite{DoignonFalmagnewgfamiliesofrelations} and was extensively studied later.
For a comprehensive account of known results on partial cubes and well-graded families, the interested reader can refer to for example the books 
\cite{bookmediatheory}, \cite{BookImrichKlavzarProductGraphs} and \cite{bookpartialcubes}.
One can see that partial cubes and well-graded families are two faces of the same coin. So, partial cubes can be naturally studied via well-graded families and vice versa and this could be considered as one of the reasons of significance of well-graded families.
Another reason to consider well-graded families as important set systems might rely on the very fact that they contain the well-known class of extremal systems (Definition \ref{defextremalsystems}), which in turn include the class of maximum systems (Definition \ref{defmaximumsystems}). Numerous nice examples of set systems stemming from 
geometrical configurations such as hyperplane-arrangements lie into these classes.  
Maximum set systems are the ones with the largest possible cardinality
for their given VC-dimension. 
Such classes are of particular
interest in discrete geometry and learning theory.
Similarly, 
extremal systems are also of particular interest in learning theory and subjects such as construction of compression schemes (see for example \cite{MornWarmuthLabeledCompSch}).
Another reason of importance of partial cubes and well-graded families is their connections to a very general and broad programme called "metric embeddings", which roughly speaking, deals with the problems of how good different metric spaces can be embedded into each other 
(see Chapter 15 of \cite{MatousekLeconDiscGeom} for some combinatorial approaches to the metric embedding problems).
More precisely, 
partial cubes by definition are instances of metric spaces (with distance metric on graphs) which are nicely (isometrically) embedded in certain important metric spaces, namely hypercubes.
This viewpoint might enable one to see partial cubes and well-graded families from the lens of the metric embedding problems.

During the course of our investigation,
we mainly pursue three goals.
First and foremost, we are interested in the characterization of well-graded families, extremal systems and maximum systems whose duals are again well-graded, extremal and maximum respectively. 
Note that the dual system of a set system is a consequential system associated to it which encodes various structural properties of that system. 
Second, we provide 
the characterization of particular classes of well-graded families possessing certain property which we will call additionality 1 or 2. This result, 
which may also be of independent interest,
serves as an introductory step for the characterization mentioned in the first goal.
Third, we are concerned with employing the above mentioned results
to study certain set systems associated to graphs from the point of view of the properties of well-gradedness, being extremal or being maximum. For instance, we characterize all graphs whose set systems of neighbourhoods are well-graded, extremal or maximum and show that they are closely related to the well-known half-graphs. 
Note that the half-graphs are important combinatorial objects which appear in graph theory and some other fields of mathematics. For example, in model theory, and more precisely Shelah's classification theory (see \cite{bookshelahclassification}), a very important class of theories introduced by Shelah, called stable theories, are defined, roughly speaking, by  
the property that
their formulas are not able to
encode arbitrary large half-graphs (or order property in the terminology of model theory) in the models of the theory. 
In light of this connection to model theory, one can see that our above-mentioned result on graphs can naturally connect us to logic and find a model theoretic meaning. However, in the present paper, we will only have a combinatorial point of view and leave pursuing the model theoretical aspects to a further work.
In another direction, it is worth mentioning that in the context of celebrated Szemeredi regularity lemma, many interesting connections between the notion of "irregular-pairs" 
and 
half-graphs have been discovered in combinatorics.
The interested reader can refer to for example \cite{MalliarisShelahReglemforstablegraphs} where the half-graphs and the model theoretic order property are involved 
for analysing the Szemeredi regularity lemma.

One-inclusion graphs of set systems appear in this paper as an important technical tool
for analysing properties such as well-gradedness in set systems and their dual counterparts. 
Such graphs
have been frequently used as central notions and substantial 
tools to scrutinize set systems in several works in the literature. 

Organization of the paper is as follows.
In Section \ref{sectionpreliminaries}, we first review in Subsection \ref{reviewpartcubesandrelatedsystems} some essential notions from the literature and previous works that we need in the rest of the paper such as well-graded families, extremal systems, maximum systems, etc. Then, in Subsection \ref{basicdefnotationfacts}, we introduce some new notions and prove some facts which we will need in the forthcoming sections.

In Section \ref{strofpartcubewithsmalladditionalities}, we 
define the notion of additionality of a set system as the difference between the size of the system and the number of elements of its domain which are shattered as one element subsets. 
Then, utilizing one-inclusion graphs, we characterize well-graded families with additionalities 1 and 2. 
In fact, we prove a handful of properties of well-graded families and their one-inclusion graphs (such as Proposition \ref{FessEGFinequalities}, Lemma \ref{sizeofbelow} 
and characterization theorems \ref{characterizeextvc=1withequalities,right=impliesleft=} and \ref{structureofpathdensewithF=ess+2}) where apart from their own interests, help us in the different characterizations we have in mind to settle later on in this work 
such as the main theorem of Section \ref{sectioncharselfanddualpartcube}.
Note that 
we also set forth a few characterizations of well-graded families of VC-dimension 1 in Theorem \ref{characterizeextvc=1withequalities,right=impliesleft=}; meanwhile some more characterizations of extremal systems of VC-dimension $1$
which were established before in \cite{MeszarosRonyaismallvc} are presented.
Moreover, as another result involving VC-dimension, we show that the VC-dimension of well-graded families with additionality $2$ is equal to $2$.

In Section \ref{sectioncharselfanddualpartcube}, we state and prove Theorem \ref{characterizationselfanddualpathdense}, which is one of the central goals of this paper. This theorem characterizes well-graded families 
whose duals 
are well-graded as well.
The proof of this theorem heavily relies on several results proved in this and former sections in particular characterization of set systems with additionalities 1 and 2 which was completely studied in Section \ref{strofpartcubewithsmalladditionalities}.
We further derive a few more readily obtainable characterizations of extremal (maximum) systems 
whose duals are also extremal (maximum).

Finally, in Section \ref{partcubeofsetsystemsassociatedtographs}, equipped with our 
acquired knowledge through the previous sections on well-graded families (in particular Theorem \ref{characterizationselfanddualpathdense}), we proceed to explore the properties such as well-gradedness, being extremal or being maximum for some of the set systems associated to graphs such as the systems of open and closed neighbourhoods, set system of cliques and set system of the independent sets. Half-graphs come to the picture in these investigations.

\section{Preliminaries}\label{sectionpreliminaries}

\subsection{A review on well-graded families and related set systems}\label{reviewpartcubesandrelatedsystems}
In this subsection we review some definitions and facts from the literature and fix some notations we will need in the the paper.

By a \textit{set system} (or hypergraph) $(X,\mathcal{F})$ we mean a set $X$, which is called the domain of the set system, and a nonempty family $\mathcal{F}$ of distinct subsets of $X$.
Through the paper, we assume that the domains of the set systems under consideration are finite. 
We call two set systems $(X,\mathcal{F})$ and $(Y,\mathcal{G})$ isomorphic if there exists some bijection $f:X \rightarrow Y$ such that for every $A \subseteq X$, we have $A \in \mathcal{F}$ if and only if $f(A) \in \mathcal{G}$. Note that an equivalent way to define the isomorphism is that there exists a bijection $f$ 
such that $\mathcal{G}=\{f(A):A \in \mathcal{F}\}$.
In a graph $G$, we denote the set of vertices and edges by $V(G)$ and $E(G)$ respectively. If $G$ is connected, one can define a distance between any two vertices $v$ and $w$ of $G$, and denote it by $d_G(v,w)$, as the number of edges in a shortest path between $v$ and $w$. One can see that $d_G$ is a metric on the set $V(G)$.
In this paper we call a directed graph connected when its underlying undirected graph is connected. Also we consider the distance between two vertices in a directed graph as their distance in the underlying undirected graph.
By the degree of a vertex in a directed graph we mean the degree of that vertex in its underlying undirected graph. Also by incoming (outgoing) degree of a vertex  we mean the number of edges incoming to (outgoing from) that vertex. 
In an edge-labelled graph we denote the label of an edge $e$ by $lab(e)$.

\vspace{2mm}

\noindent \textbf{Well-graded families and one-inclusion graphs}

\vspace{1.5mm}

\begin{dfn}\label{defoneincgraph}
By the \textit{one-inclusion graph} of a set system $(X,\mathcal{F})$ we mean a simple edge-labelled directed graph $G_{\mathcal{F}}$ whose vertices are in a one-to-one correspondence to members of $\mathcal{F}$, namely each element $A \in \mathcal{F}$ corresponds to a vertex of $G_{\mathcal{F}}$ which we denote it by $v_A$. Moreover, for any $A, B \in \mathcal{F}$ there is a directed edge $e$ from $v_A$ to $v_B$ if and only if there exists some $b \in X$ such that $B =A \cup \{b\}$.
Also the label on this directed edge 
is $b$ and is referred by the notation $lab(e)$.
For every vertex $v \in V(G_{\mathcal{F}})$ we denote the member of $\mathcal{F}$ corresponding to $v$ by $st(v)$.
\end{dfn}

One-inclusion graphs are essential tools for studying set systems and have been used as important technical tools in many papers such as 
\cite{GrecoEmbeddingstracefinitesets},
\cite{HausslerLittlestoneWarmuthPredictingfunctionsonrandomlydrawnpoints},
\cite{KuzminWarmuth}, \cite{MeszarosRonyaismallvc}, \cite{MornWarmuthLabeledCompSch},
and \cite{RubinsteinRubinsteinBartlettShiftingOneinclusionmistake}.
The following statement is a well-known fact and can be verified easily.
Note that we call a directed graph bipartite when its underlying undirected graph is bipartite.

\begin{rem}\label{1incgraphisbipartite}
The one-inclusion graph of every set system is bipartite.
\end{rem}

For any two sets $A,B$ in a set system, we define $d_h(A,B)=|A \triangle B|$ and call it the Hamming distance of $A, B$.
In this paper we consider a well-known class of set systems called well-graded families defined as follows.

\begin{dfn}\label{defisomembedinfinitesystems}
We call a set system $(X,\mathcal{F})$ 
a \textit{well-graded family} (or well-graded system) 
if for every $A, B \in \mathcal{F}$ we have
$d_h(A,B)=d_{G_{\mathcal{F}}}(v_A,v_B).$
\end{dfn}

Well-graded families are also known as \textit{isometrically embedded systems} in the literature.
One sees that the one-inclusion graph of every well-graded family is connected.

\begin{dfn}
For any natural number $n$, by the $n$-dimensional hypercube graph we mean a graph 
whose vertices corresponding to binary $\{0,1\}$-vectors of length $n$ where two vertices are connected if and only if the corresponding vectors 
differ in exactly one coordinate.
\end{dfn}

\begin{dfn}
A graph is a called a \textit{partial cube} if and only if it can be isometrically embedded in a hypercube.
\end{dfn}
The above definition can be equivalently stated in this way that a graph $G$ is a partial cube if and only if its vertices can be labelled by subsets of a fix set $X$ in such a way that the distance between any two vertices in the graph is equal to the size of the symmetric difference of
the labels of those vertices. 
\begin{rem}
An undirected graph is a partial cube if and only if it is isomorphic to the underlying undirected graph of the one-inclusion graph of a well-graded family.
\end{rem}

By the above remark, partial cubes can be naturally studied via well-graded families and vice versa.
Partial cubes have 
rich structural properties. 
These graphs were introduced by Graham and Pollak in \cite{GrahamPollak}.
Note that the characterizations of Djokovic, \cite{Djokovic}, Winkler \cite{WinklerIsometricembeddingsinproductsofcompletegraphs} and Chepoi \cite{Chepoi} and many other fundamental results in \cite{Chepoi,Djokovic,EppsteinThelatticedimension,OvchinnikovPartialcubesStructurescharacterizations,WinklerIsometricembeddingsinproductsofcompletegraphs}
have given us a good understanding of the structure of partial cubes.
The notion of well-graded families
was introduced in \cite{DoignonFalmagnewgfamiliesofrelations}.
Several other investigations concerning partial cubes and well-graded families have been carried out
in numerous papers such as 
\cite{AlbenqueKnauerConvexityinpartialcubes,BresarImrichKlavzarFastrecognitionpartialcubesDiscAppMath,EppsteinThelatticedimension,FukudaHanda, KlavzarMulderPartialcubesandcrossinggraphs,KlavzarShpectorovTribesofcubicpartialcubes,KlavzarShpectorovConvexexcesspartialcubes,OvchinnikovPartialcubesStructurescharacterizations, 
WilkeitIsometricembeddingsinHamminggraphs,
WinklerIsometricembeddingsinproductsofcompletegraphs}.
Also interactions of these notions with media theory were explored in 
several works such as \cite{EppsteinFalmagneAlgorithmsformediaDiscAppMath}, \cite{FalmagneOvchinnikovMediaTheoryDiscAppMath}, \cite{OvchinnikovMediaTheoryRepandExmDiscAppMath}. 
Partial cubes and well-graded families have many applications in different parts of combinatorics and established links to other fields of mathematics as well as theoretical computer science. 
Also partial cubes have found many applications outside of mathematics. For example, papers such as \cite{KlavzaGutmanMoharLabelingBenzenoid}, \cite{KlavzarNadjafiAraniComputingdistanceDjokovicWinklerrelationDiscApplMath} or \cite{ZhangXuNonecoronoidembeddedintohypercubeDiscApplMath} study certain relations between 
partial cubeness and properties of some particular classes of graphs with application in the natural sciences.
For a comprehensive account of known results and related papers on partial cubes and well-graded families, the interested reader can refer the books 
\cite{BookDezaLaurentGeometryofCutsandMetrics, bookmediatheory,BookImrichKlavzarProductGraphs} and \cite{bookpartialcubes}.

\vspace{1mm}

The following is a simple observation about well-graded families.

\begin{rem}\label{rempathdenseforinfiniteandfinitesystems}
A set system $(X,\mathcal{F})$ 
is a well-graded family if and only if for every $A, B \in \mathcal{F}$ with 
$|A \bigtriangleup B|\geqslant 2$
there exists some $C \in \mathcal{F}$ such that $C \bigtriangleup A \subsetneqq A \bigtriangleup B$ and 
$C \bigtriangleup B \subsetneqq A \bigtriangleup B$ (or equivalently 
$A \cap B \subseteq C \subseteq A \cup B$ and $C \not=A$ and $C \not = B$). 
\end{rem}

One of the reasons for importance of class of well-graded families, besides its tight connections to partial cubes, 
arises from the fact that it contains many important subclasses such as the class of maximum systems and the class of extremal systems which we define below.

\vspace{2mm}

\noindent \textbf{Maximum and extremal systems}

\vspace{2mm}

For a set system $(X,\mathcal{F})$ and $Y \subseteq X$ define
$\mathcal{F} \cap Y:=\{A \cap Y: A \in \mathcal{F}\}.$
We call the new set system $\mathcal{F} \cap Y$ on the domain $Y$ the \textit{trace} of the set system $\mathcal{F}$ on the set $Y$.
In a set system $(X,\mathcal{F})$ a subset $Y \subseteq X$ is called \textit{shattered} by $\mathcal{F}$ if $\mathcal{F} \cap Y=\mathcal{P}(Y)$.
The VC-dimension of $\mathcal{F}$, denoted by $VCdim(\mathcal{F})$, is the largest integer $n$ 
such that there exists some 
subset of $X$ of size $n$ which is shattered by $\mathcal{F}$.
One can find the following important theorem in \cite{SauerOnthedensityoffamiliesofsets} or \cite{ShelahAcombinatorialproblemstabilityandorderformodelsandtheories}.

\begin{thm}[Sauer-Shelah lemma]\label{SauerShelahLemma}
Assume that $(X,\mathcal{F})$ is a set system with $VCdim(\mathcal{F})=d$.
Then for every 
$Y \subseteq X$ we have
$$|\mathcal{F} \cap Y| \leqslant \sum_{i=0}^{d}\binom{|Y|}{i}.$$
\end{thm}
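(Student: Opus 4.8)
The plan is to reduce the statement to the case $Y = X$ and then prove that case by induction on $|X|$. For the reduction, note that the trace $\mathcal{F} \cap Y$ is itself a set system on the domain $Y$, and any subset of $Y$ shattered by $\mathcal{F} \cap Y$ is automatically shattered by $\mathcal{F}$; hence $VCdim(\mathcal{F} \cap Y) \leqslant d$. Since the right-hand side $\sum_{i=0}^{d}\binom{|Y|}{i}$ is monotone in the dimension parameter, it suffices to prove that every finite set system $(Z,\mathcal{G})$ with $VCdim(\mathcal{G}) = d'$ satisfies $|\mathcal{G}| \leqslant \sum_{i=0}^{d'}\binom{|Z|}{i}$, and then apply this to $(Y, \mathcal{F} \cap Y)$ with $d' = VCdim(\mathcal{F}\cap Y) \leqslant d$.

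For the induction, I would fix an element $x \in X$ and split $\mathcal{F}$ according to its interaction with $x$. Let $\mathcal{F}_1 = \mathcal{F} \cap (X \setminus \{x\})$ be the trace on $X \setminus \{x\}$, and let $\mathcal{F}_2 = \{A \subseteq X \setminus \{x\} : A \in \mathcal{F} \text{ and } A \cup \{x\} \in \mathcal{F}\}$, both regarded as set systems on $X \setminus \{x\}$. The first key step is the counting identity $|\mathcal{F}| = |\mathcal{F}_1| + |\mathcal{F}_2|$: deleting $x$ sends each set of $\mathcal{F}$ to its trace in $\mathcal{F}_1$, and the only traces with two preimages are exactly those $B$ for which both $B$ and $B \cup \{x\}$ lie in $\mathcal{F}$, which are precisely the members recorded by $\mathcal{F}_2$; so $\mathcal{F}_2$ accounts for the cardinality lost under the projection.

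The second, and I expect the main, step is the control of the two VC-dimensions. One has $VCdim(\mathcal{F}_1) \leqslant d$ immediately. The crucial claim is the strict drop $VCdim(\mathcal{F}_2) \leqslant d - 1$: if $S \subseteq X \setminus \{x\}$ is shattered by $\mathcal{F}_2$, then for every $R \subseteq S$ there is some $A \in \mathcal{F}_2$ with $A \cap S = R$, and by the definition of $\mathcal{F}_2$ both $A$ and $A \cup \{x\}$ belong to $\mathcal{F}$; these two sets realize the traces $R$ and $R \cup \{x\}$ of $S \cup \{x\}$, so $S \cup \{x\}$ is shattered by $\mathcal{F}$ and therefore $|S| + 1 \leqslant d$. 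Establishing this dimension drop cleanly is the heart of the argument; the rest is bookkeeping.

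Finally, applying the induction hypothesis on the $(|X|-1)$-element domain gives $|\mathcal{F}_1| \leqslant \sum_{i=0}^{d}\binom{|X|-1}{i}$ and $|\mathcal{F}_2| \leqslant \sum_{i=0}^{d-1}\binom{|X|-1}{i}$, and adding these and regrouping through Pascal's identity $\binom{|X|-1}{i} + \binom{|X|-1}{i-1} = \binom{|X|}{i}$ telescopes the total to $\sum_{i=0}^{d}\binom{|X|}{i}$, which closes the induction; the base case $|X| = 0$ is immediate since then $|\mathcal{F}| \leqslant 1 = \binom{0}{0}$. As an alternative route one could instead use the down-shifting (polarization) operators $S_x$ that push each set toward $\emptyset$ along the coordinate $x$: these preserve $|\mathcal{F}|$ and never increase $VCdim$, and iterating them until the family is stable yields a downward-closed system in which every member is itself a shattered set and hence has size at most $d$, again giving the claimed bound. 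In either approach the technical crux is the non-increase of VC-dimension under the relevant reduction.
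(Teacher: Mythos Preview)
Your argument is correct and is essentially the standard induction proof of the Sauer--Shelah lemma (often attributed to Perles, and found in many textbooks): reduce to the case $Y=X$, split via an element $x$ into the trace $\mathcal{F}_1$ and the ``link'' $\mathcal{F}_2$, use the dimension drop $VCdim(\mathcal{F}_2)\leqslant d-1$, and finish with Pascal's identity. The alternative shifting approach you sketch is also standard and correct.

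However, there is nothing to compare against: the paper does not supply its own proof of this theorem. It is stated as a known result with a citation to the original papers of Sauer and Shelah, and later it is remarked in passing that the lemma also follows from the right-hand inequality of the Sandwich theorem (citing Anstee--R\'onyai--Sali). So your proposal stands on its own as a valid proof, but the paper simply quotes the result from the literature rather than proving it.
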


\begin{dfn}\label{defmaximumsystems}(Maximum systems)
We call a set system $(X,\mathcal{F})$ with $VCdim(\mathcal{F})=d$ a \textit{$d$-maximum} system if for any $Y \subseteq X$, the inequality of the Sauer-Shelah lemma turns to equality. Also we call a set system maximum if it is $d$-maximum for some non-negative integer $d$.
\end{dfn}
Maximum systems are of particular interest in discrete geometry and learning theory. One can refer to for example \cite{KuzminWarmuth} and \cite{RubinsteinRubinsteinBartlettBoundingEmbeddings} to get a glimpse of some of the focal results on maximum systems and applications of this class in learning theory.

By a \textit{shattering-cube} on $Y \subseteq X$ (or a $Y$-shattering-cube) we mean a set system $\mathcal{C}$ on $X$ of the form 
$\mathcal{C}=\{Z \cup T: Z \subseteq Y\}$ for some fix $T \subseteq X \setminus Y$ which is called the \textit{tag} of the shattering-cube $\mathcal{C}$.
In a set system $(X,\mathcal{F})$, the subset $Y \subseteq X$ is said to be \textit{strongly shattered} by $\mathcal{F}$ if $\mathcal{F}$ contains a $Y$-shattering-cube.
By $sht(\mathcal{F})$ and $ssht(\mathcal{F})$ we mean the set of all subsets of $X$ shattered and strongly shattered by $\mathcal{F}$ respectively. The following theorem is an important result about set systems and was proven independently in several papers. One can refer to \cite{MoranShatteringExtremalSystemsMastersthesis} to see a list of references for this theorem. 

\begin{thm}[Sandwich theorem] 
Let $(X,\mathcal{F})$ be a set system. 
Then  $|ssht(\mathcal{F})| \leqslant |\mathcal{F}| \leqslant |sht(\mathcal{F})|.$
\end{thm}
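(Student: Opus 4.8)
The plan is to prove the two inequalities simultaneously by the \emph{down-shifting} (compression) technique, reducing an arbitrary family to a downward-closed one for which all three quantities coincide. For a fixed element $x \in X$, I would define the shift operator $S_x$ that sends each $A \in \mathcal{F}$ to $A \setminus \{x\}$ whenever $x \in A$ and $A \setminus \{x\} \notin \mathcal{F}$, and fixes $A$ otherwise; set $S_x(\mathcal{F}) = \{S_x(A) : A \in \mathcal{F}\}$. The first routine observations are that $S_x$ is injective on $\mathcal{F}$, so $|S_x(\mathcal{F})| = |\mathcal{F}|$, and that the potential $\sum_{A \in \mathcal{F}} |A|$ strictly decreases whenever some set is actually shifted. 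Hence, iterating the operators $S_x$ over $x \in X$ terminates after finitely many steps at a \emph{stable} family $\mathcal{F}^{*}$ with $|\mathcal{F}^{*}| = |\mathcal{F}|$ and $S_x(\mathcal{F}^{*}) = \mathcal{F}^{*}$ for every $x$. Stability is precisely the statement that $\mathcal{F}^{*}$ is downward closed: $A \in \mathcal{F}^{*}$ and $B \subseteq A$ imply $B \in \mathcal{F}^{*}$.

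The two engine lemmas I would establish are the monotonicity of the shattering functionals under a single shift, namely
\[
sht(S_x(\mathcal{F})) \subseteq sht(\mathcal{F}) \quad\text{and}\quad ssht(\mathcal{F}) \subseteq ssht(S_x(\mathcal{F})).
\]
For the first inclusion, if $x \notin Y$ the traces of $\mathcal{F}$ and $S_x(\mathcal{F})$ on $Y$ coincide, so shattering is unaffected; if $x \in Y$ and $Y$ is shattered after the shift, then for every $W \subseteq Y \setminus \{x\}$ the member realizing $W \cup \{x\}$ in $S_x(\mathcal{F})$ still contains $x$, whence it was not shifted and both it and its $x$-deletion already lie in $\mathcal{F}$, realizing $W \cup \{x\}$ and $W$; this shows $Y$ is already shattered by $\mathcal{F}$. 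For the second inclusion one takes a $Y$-shattering cube $\mathcal{C} = \{Z \cup T : Z \subseteq Y\} \subseteq \mathcal{F}$ with tag $T$ and checks that either $\mathcal{C}$ itself survives in $S_x(\mathcal{F})$, or, in the one delicate case $x \in T$, that the translated cube with tag $T \setminus \{x\}$ lies in $S_x(\mathcal{F})$.

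Once these inclusions are in hand the conclusion is immediate: $|sht|$ can only decrease and $|ssht|$ can only increase along the sequence of shifts, so
\[
|ssht(\mathcal{F})| \leqslant |ssht(\mathcal{F}^{*})| \quad\text{and}\quad |sht(\mathcal{F})| \geqslant |sht(\mathcal{F}^{*})|.
\]
It therefore remains to treat the downward-closed family $\mathcal{F}^{*}$, where everything collapses: every $A \in \mathcal{F}^{*}$ is strongly shattered (witnessed by the cube of all its subsets, with empty tag) and hence shattered, while conversely any shattered $Y$ satisfies $Y \subseteq A$ for some $A \in \mathcal{F}^{*}$ and thus $Y \in \mathcal{F}^{*}$ by downward closure; consequently $ssht(\mathcal{F}^{*}) = \mathcal{F}^{*} = sht(\mathcal{F}^{*})$, and in particular $|ssht(\mathcal{F}^{*})| = |\mathcal{F}^{*}| = |sht(\mathcal{F}^{*})|$. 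Combining this with the monotonicity above and with $|\mathcal{F}^{*}| = |\mathcal{F}|$ yields $|ssht(\mathcal{F})| \leqslant |\mathcal{F}| \leqslant |sht(\mathcal{F})|$. I expect the main obstacle to be the verification of the strong-shattering inclusion $ssht(\mathcal{F}) \subseteq ssht(S_x(\mathcal{F}))$ in the case where $x$ belongs to the tag $T$, since there the original cube need not survive verbatim and one must exhibit a genuinely different surviving cube; the remaining steps are essentially bookkeeping.
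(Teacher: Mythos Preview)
The paper does not actually supply a proof of the Sandwich theorem; it simply states it as a known result and refers the reader to \cite{MoranShatteringExtremalSystemsMastersthesis} for a list of original references. So there is no proof in the paper to compare against.

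That said, your proposal is correct and is one of the standard arguments. The down-shifting you describe preserves cardinality, weakly shrinks $sht$, weakly enlarges $ssht$, and terminates at a downward-closed family where all three quantities coincide. Your handling of the delicate case $x \in T$ for the strong-shattering inclusion is right: for each $Z \subseteq Y$, either $Z \cup (T\setminus\{x\})$ is already in $\mathcal{F}$ (and, containing no $x$, is fixed by $S_x$), or it is not, in which case $Z \cup T \in \mathcal{F}$ is genuinely shifted down to it; either way the cube with tag $T\setminus\{x\}$ lies in $S_x(\mathcal{F})$. The rest is bookkeeping exactly as you say.
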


It is worth mentioning that an easy consequence of the right side inequality of the sandwich theorem implies the Sauer-Shelah lemma (see Section 1 of \cite{AnsteeRonyaiSaliShatteringnews}).
We define another important class of set systems 
as the systems for which the inequalities of the sandwich theorem 
turn into equality. 

\begin{dfn}\label{defextremalsystems}
A system $(X,\mathcal{F})$ is called \textit{extremal} if $ssht(\mathcal{F})=sht(\mathcal{F})$.
\end{dfn}

If in a set system, the right inequality of the sandwich theorem turns to equality, then the system is called \textit{s-extremal}. It is shown in \cite{BollobasRadcliffe} and \cite{BandeltCombinatoricsoflopsidedsets} that if either of the inequalities of the sandwich theorem turns to equality, then the other one turns to equality too. In fact, s-extremal systems are the same as the extremal systems.
Extremal systems were discovered independently several times in different papers such as \cite{LawrenceLopsidedsets} 
and \cite{BollobasRadcliffe}. 
They were extensively studied and several characterizations of them were given in 
\cite{AnsteeRonyaiSaliShatteringnews,
BandeltCombinatoricsoflopsidedsets,
BollobasRadcliffe,
GrecoEmbeddingstracefinitesets,
LawrenceLopsidedsets,
MoranShatteringExtremalSystemsMastersthesis,
RonyaiMeszarosSomecombinatorialapplicationsofGrobnerbases}.
Also many other interesting related results in connection with combinatorial structure of these systems have been obtained in papers such as \cite{BollobasLeaderRadcliffeReverseKleitmanInequalities}, 
\cite{FurediQuinnTraceofFiniteSets} and \cite{KozmaMoranShatteringGraphorientation}.
The following remark is a known fact about relation between maximum and extremal systems.
\begin{rem}\label{maximumsareextremal}
Every maximum set system 
is extremal. 
\end{rem}

Note that the other direction of the Remark \ref{maximumsareextremal} does not hold and there are extremal systems which are not maximum. 
The classes of maximum and extremal systems contain many important set systems arising from combinatorial and computational geometry such as the hyperplane-arrangements. 
Also they have very good connections to several problems in learning theory. 
The interested reader can refer to for example \cite{MornWarmuthLabeledCompSch} or \cite{KuzminWarmuth} to see some examples and further details about appearance of these classes of systems in different areas.
One of the sources of importance of class of well-graded families might be because of the following result due to Greco 
which shows that maximum systems and extremal systems are both well-graded families.

\begin{rem}\label{Grecohamdisequalsgraphdis}
(See \cite{GrecoEmbeddingstracefinitesets})
Every extremal set system is a well-graded family.
\end{rem}

Therefore, the class of well-graded families is a rich class containing the classes of maximum and extremal systems and much more.
We denote the class of maximum systems, extremal systems and well-graded families by $MAX, EXT$ and $WG$ respectively.
As a summary of remarks \ref{maximumsareextremal} and \ref{Grecohamdisequalsgraphdis}, we have
$MAX \subseteq EXT \subseteq WG.$ 

\subsection{Some basic definitions, notations and facts}\label{basicdefnotationfacts}
In this part we introduce some notions we need in the next sections and prove some statements about them.

\begin{dfn}\label{defFbarreverse}
For a set system $(X,\mathcal{F})$ by $(X,\overline{\mathcal{F}})$ we mean a set system on $X$ with $\overline{\mathcal{F}}=\{A^c:A \in \mathcal{F}\}$.
\end{dfn}
\begin{dfn}
By the \textit{reverse} of an edge-labelled directed graph we mean a graph with the same vertex set, edges and labels with the only difference that the directions of the edges are reversed.
\end{dfn}

The statements of the following remarks are easy to be verified.

\begin{rem}\label{reverseofoneincgraph}
The reverse of the one-inclusion graph of a set system $(X,\mathcal{F})$ is the one-inclusion graph of the system $(X,\overline{\mathcal{F}})$.
\end{rem}

\begin{rem}\label{complementofpartcubeispartcube}
A set system $(X,\mathcal{F})$ is a well-graded family, an extremal system or a maximum system if and only if 
$(X,\overline{\mathcal{F}})$ is a well-graded family, an extremal system or a maximum system respectively.
\end{rem}

\begin{dfn}
In a set system $(X,\mathcal{F})$ we call a $x \in X$ \textit{essential} if there are $A,B \in \mathcal{F}$ such that $x \in A$ and $x \not \in B$ or equivalently if $\{x\}$ as a one element subset of $X$ is shattered by $\mathcal{F}$. We denote the subset of essential elements of $X$ by $ess_{\mathcal{F}}(X)$ and call it the \textit{essential domain} of the system $(X,\mathcal{F})$. 
Also for every $Y \subseteq X$, we define $ess_{\mathcal{F}}(Y):=ess_{\mathcal{F}}(X) \cap Y$.
We call the quantity 
$|\mathcal{F}|-|ess_{\mathcal{F}}(X)|$ the \textit{additionality} of the system $(X,\mathcal{F})$. 
\end{dfn}

The following remarks are easy to be verified.

\begin{rem} Extending the domain of a system by non-essential elements does not affect the properties of being extremal or being well-graded for the system, but can affect the property of being maximum. More precisely, extending the domain of a maximum set system with positive VC-dimension by non-essential elements gives rise to a non-maximum system.
\end{rem}

\begin{rem}\label{|ess(X)|numberofdistinctlabels}
Let $(X,\mathcal{F})$ be a set system 
whose one-inclusion graph is connected (for example well-graded families).
Then every $x \in ess_{\mathcal{F}}(X)$ appears as the label of at least one edge of the graph $G_{\mathcal{F}}$. Moreover, the label of every edge of $G_{\mathcal{F}}$ is an element of $ess_{\mathcal{F}}(X)$. Therefore, $|ess_{\mathcal{F}}(X)|$ is equal to the number of distinct labels in $G_{\mathcal{F}}$.

\end{rem}

\begin{dfn}\label{defflipoperation}
In a set system $(X,\mathcal{F})$ and for any $a \in X$, by the \textit{bit-flip} operation in $\mathcal{F}$ on element $a$ we mean the set system $\mathcal{F}'$ defined on $X$ as 
$\mathcal{F}':=\{A \setminus \{a\}: A \in \mathcal{F}, a \in A\} \cup \{A \cup \{a\}: A \in \mathcal{F}, a \not \in A\}.$
Also for any $A \in \mathcal{F}$ and $B \subseteq X$, 
by $flip(\mathcal{F}, A \rightarrow B)$ we mean a set system obtained from $\mathcal{F}$ by applying a bit-flip operation for every $a \in A \triangle B$.
We call such operations the flip operations.
\end{dfn}

\begin{rem}\label{infliptoemtryeverymemberissubsetofess}
Let $(X,\mathcal{F})$ be a set system, $A \in \mathcal{F}$ and $\mathcal{F}':=flip(\mathcal{F}, A \rightarrow \emptyset)$.
Then we have $ess_{\mathcal{F}'}(X)=ess_{\mathcal{F}}(X)$.
Also $(X,\mathcal{F})$ and $(X,\mathcal{F}')$ have the same additionality.
Moreover, every member of $\mathcal{F}'$, viewed as a subset of $X$, is a subset of $ess_{\mathcal{F}}(X)$. 
\end{rem}

\begin{rem}\label{VCdimofflipissameastheoriginalsystem}

Let $(X,\mathcal{F})$ be a system, $A,B \in \mathcal{F}$ and $\mathcal{F}':=flip(\mathcal{F}, A \rightarrow B)$.
Then $VCdim(\mathcal{F})=VCdim(\mathcal{F}')$.
\end{rem}
\begin{proof}
One can see that the shattered subsets of $X$ by $\mathcal{F}$ and $\mathcal{F}'$ are the same.
It follows the result. \hfill $\square$
\end{proof}

\begin{rem}\label{inWGeveryelemofshatteredsetappearslabel}
Let $(X,\mathcal{F})$ be a well-graded family and assume that $D \subseteq X$ is shattered by $\mathcal{F}$. Then every $a \in D$ appears as the label of at least $2^{|D|-1}$ edges of $G_{\mathcal{F}}$.
\end{rem}
\begin{proof}
Fix an arbitrary $a \in D$. For every $U \subseteq D$ containing $a$, since $D$ is shattered by $\mathcal{F}$, we can find $K_U$ and $L_U$ in $\mathcal{F}$ such that $K_U \cap D=U$ and $L_U \cap D=U \setminus \{a\}$.
Since the system is well-graded, there is a path $P_U$ in $G_{\mathcal{F}}$ between $v_{K_U}$ and $v_{L_U}$ of length $|K_U \triangle L_U|$ such that the labels of its edges are elements of $K_U \triangle L_U$ each one used once. So, there is an edge, say $e_U$, 
in $P_U$ with label $a$. Hence, every $U \subseteq D$ containing $a$ gives rise to an edge $e_U$ with label $a$ in $G_{\mathcal{F}}$. Thus, it is enough to show that different such $U$'s give rise to different $e_U$'s.
It is not hard to see that for every such $U$'s and every vertex $v_A$ in $P_U$, we have 
$A \cap (D \setminus \{a\})=U \setminus \{a\}$.
So for every different $U, U' \subseteq D$ containing $a$, two paths $P_U$ and $P_{U'}$ do not have any common vertex which follows that $e_U$ and $e_U'$ are distinct.
\hfill $\square$
\end{proof}

\vspace{3mm}

\noindent \textbf{Dual systems and well-gradedness}

\vspace{2mm}

The notion of dual systems defined as follows is an important tool in studying set systems and has been frequently used in the literature.
The interested reader can see for example Chapter 17 of the book \cite{bookBergegraphsandhypergraphs} for some concepts around duality.

\begin{dfn}\label{defdualsystem}
By \textit{dual system} of a set system $(X,\mathcal{F})$, denoted by $(X,\mathcal{F})^*$, we mean a set system $(Y_{\mathcal{F}},\mathcal{F}^*)$ where $Y_{\mathcal{F}}=\{y_A:A \in \mathcal{F}\}$ where for each $A \in \mathcal{F}$, $y_A$ is a symbol corresponding to $A$ and $y_A$'s are different for different $A$'s, and $\mathcal{F}^*=\{\mathcal{A}_x:x \in X\}$ where for every $x \in X$, we define $\mathcal{A}_x:=\{y_A: A \in \mathcal{F}, x \in A\}$. 
If instead of $\mathcal{F}^*$ we consider its subset $\mathcal{F}^*_{ess}:=\{\mathcal{A}_x:x \in ess_{\mathcal{F}}(X)\}$ as the system on $Y_{\mathcal{F}}$, then we call this system the 
\textit{essential dual} (denoted by ess-dual) of $(X,\mathcal{F})$. We call a set system \textit{self-dual} (\textit{self-ess-dual}) if it is isomorphic to its dual (ess-dual) system.
\end{dfn}

In this paper and in the definition of the dual systems, we consider $\mathcal{F}^*$ as a set and not a multiset. It means that if for $x,y \in X$ we have $\mathcal{A}_x=\mathcal{A}_y$, then $\mathcal{A}_x$ and $\mathcal{A}_y$ are counted both as one member of $\mathcal{F}^*$ and not two.
Duality of set systems can be also explained nicely with the language of matrices.
By incidence matrix of a set system, we mean a matrix that has one column corresponding to each member $A$ of the system and one row corresponding to each member $x$ of the domain of the system and the entry in the column corresponding to $A$ and row corresponding to $x$ is $1$ if $x \in A$ and $0$ otherwise.
In order to find the dual system of a set system, it is enough to consider the incidence matrix of the system, then transpose it, and finally identify the identical columns. Then, the obtained matrix would be the incidence matrix of the dual set system.

\begin{rem}\label{partcube,complement,dual}
Assume that $(X,\mathcal{F})$ is a set system. 
Then two set systems $(Y_{\mathcal{F}},\overline{\mathcal{F}^*})$ and $(Y_{\overline{\mathcal{F}}},{\overline{\mathcal{F}}}^*)$ are isomorphic via the map $y_A \rightarrow y_{A^c}$ for every $A \in \mathcal{F}$.
\end{rem}

\begin{rem}\label{identifyingY_Fwithsetofverticesof1-inc}
For a set system $(X,\mathcal{F})$, whenever we need we identify the elements of $Y_{\mathcal{F}}$ with the set of vertices of the graph $G_{\mathcal{F}}$ via the correspondence $y_A \leftrightarrow v_A$ for every $A \in \mathcal{F}$. This enables us to see $\mathcal{A}_x$'s, the members of the dual system, as subsets of the vertices of $G_{\mathcal{F}}$.
\end{rem}

For a graph $G$ and two disjoint subsets $A, B \subseteq V(G)$, we denote the set of edges between $A$ and $B$ by $E(A,B)$.
By a cut-set in a graph $G$ we mean a set of edges of the form $E(A,A^c)$ for some $A \subseteq V(G)$.
In any set system $(X,\mathcal{F})$ and for every $a \in ess_{\mathcal{F}}(X)$, we denote the set of edges with label $a$ in $G_{\mathcal{F}}$ by $C_a$.
In the following statement we relate the dual of the set systems with the cut-sets of the one-inclusion graphs of them.

\begin{rem}\label{inpartcubedgeswithlabelaiscutsets}
If $(X,\mathcal{F})$ is a set system, then for every $a \in ess_{\mathcal{F}}(X)$, $C_a$ is the same as the cut-set $E(\mathcal{A}_a,\mathcal{A}_a^c)$ of the graph $G_{\mathcal{F}}$ 
(where $\mathcal{A}_a$ is viewed as subsets of the vertices of $G_{\mathcal{F}}$ as explained in Remark \ref{identifyingY_Fwithsetofverticesof1-inc}) and directions of the edges in $C_a$ is from $\mathcal{A}_a^c$ to $\mathcal{A}_a$. Moreover, if the system is well-graded, then subgraphs induced on both $\mathcal{A}_a$ and $\mathcal{A}_a^c$ are connected.
\end{rem}
\begin{proof}
Fix some $a \in ess_{\mathcal{F}}(X)$.
Every edge $e \in E(\mathcal{A}_a,\mathcal{A}_a^c)$ is connecting a vertex $v_A$ to a vertex $v_B$ for some $A,B \in \mathcal{F}$ with $a \not \in A$ and $a \in B$.
So By definition of one-inclusion graph $B=A \cup \{a\}$, the edge has a direction from $v_A$ to $v_B$ and the label on it is $a$. So $e \in C_a$. Hence $E(\mathcal{A}_a,\mathcal{A}_a^c) \subseteq C_a$. On the other hand, since every edge $e \in C_a$ has label $a$, the member of $\mathcal{F}$ corresponding to one of ends of $e$ contains $a$ and the other one not. 
So one of two ends of $e$ belongs to $\mathcal{A}_a$ and the other end not. 
Thus, $e$ is an edge between $\mathcal{A}_a$ and $\mathcal{A}_a^c$. 
Therefore, $C_a \subseteq  E(\mathcal{A}_a,\mathcal{A}_a^c)$.
So far we have shown that $C_a=E(\mathcal{A}_a,\mathcal{A}_a^c)$.
Now it is also clear that direction of every edge in $C_a$ is from $\mathcal{A}_a^c$ to $\mathcal{A}_a$.
For every two vertices $v_A$ and $v_B$ in $\mathcal{A}_a$, both $A$ and $B$ contain $a$. So if $(X,\mathcal{F})$ is moreover a well-graded family, then there is a path with $|A \triangle B|$ edges in $G_{\mathcal{F}}$ between $v_A$ and $v_B$ and since $a \not \in A \triangle B$, no edge of this path has $a$ as its label. Therefore, this path does not pass through the set of edges $C_a$ (which is shown to be the same as $E(\mathcal{A}_a,\mathcal{A}_a^c)$) and is entirely inside the subgraph induced on $\mathcal{A}_a$ by $G_{\mathcal{F}}$. So this subgraph is connected. A similar argument shows that the subgraph induced on $\mathcal{A}_a^c$ is also connected.
\hfill $\square$
\end{proof}

\vspace{1.5mm}

By using Remark \ref{inpartcubedgeswithlabelaiscutsets}, one can see that the members of the dual system $(Y_{\mathcal{F}},\mathcal{F}^*)$ (except possibly two members $\emptyset$ and $Y_{\mathcal{F}}$ which correspond to elements of $X \setminus ess_{\mathcal{F}}(X)$) correspond to the cut-sets of the graph $G_{\mathcal{F}}$ of the form $C_a$ for some $a \in ess_{\mathcal{F}}(X)$. Roughly speaking, certain cut-sets of the graph $G_{\mathcal{F}}$ determine the dual system.

One of the goals of this paper is to study the well-graded families 
whose dual systems are also well-graded. We first give a name to such systems in the following definition.

\begin{dfn}\label{defselftanddualextsystem}
We call a set system $(X,\mathcal{F})$ 
"\textit{self-and-(ess-) dual well-graded}"
if the system itself and also its (ess-) dual system are both well-graded families. The notions of "\textit{self-and-(ess-) dual maximum}" and "\textit{self-and-(ess-) dual extremal}" are also defined in a similar way only by replacing the property of well-gradedness by being maximum and being extremal respectively.
\end{dfn}

We define a system to be \textit{(ess-) dual well-graded family} if its (ess-) dual system is a well-graded family.

\begin{rem}\label{selfanddualpartcubeiffcomplementselfanddualpartcube}
A system $(X,\mathcal{F})$ is a dual well-graded family if and only if 
$(X,\overline{\mathcal{F}})$ is a dual well-graded family.
Similarly, $(X,\mathcal{F})$ is a self-and-dual well-graded family if and only if $(X,\overline{\mathcal{F}})$ is self-and-dual well-graded.
\end{rem}
\begin{proof}
If $(X,\mathcal{F})$ is a dual well-graded family, $(Y_{\mathcal{F}},\mathcal{F}^*)$ is a well-graded family. So by Remark 
\ref{complementofpartcubeispartcube}, $(Y_{\mathcal{F}},\overline{\mathcal{F}^*})$ is a well-graded family. Now by Remark \ref{partcube,complement,dual},
$(Y_{\overline{\mathcal{F}}},\overline{\mathcal{F}}^*)$ is a well-graded family which follows that $(X,\overline{\mathcal{F}})$ is a dual well-graded family. 
Conversely, if $(X,\overline{\mathcal{F}})$ is a dual well-graded family, then the same argument and the fact that $(X,\overline{\overline{\mathcal{F}}})=(X,\mathcal{F})$ completes the proof.
In the case that $(X,\mathcal{F})$ is self-and-dual well-graded,
again by 
Remark \ref{complementofpartcubeispartcube}
$(X,\overline{\mathcal{F}})$ is well-graded. So combining this with the previous argument, $(X,\overline{\mathcal{F}})$ is self-and-dual well-graded. The converse is also clear by using the similar arguments as above.
\hfill $\square$
\end{proof}

\vspace{1.5mm}

By using remarks \ref{maximumsareextremal} and \ref{Grecohamdisequalsgraphdis}, one can see that every self-and-(ess-) dual maximum or extremal system is self-and-(ess-) dual well-graded.

\vspace{4mm}

\noindent \textbf{Purification of set systems}

\begin{dfn}\label{eqrelhavingsameFtype}
Let $(X,\mathcal{F})$ be a set system. We say that two elements $x,y \in X$ have the same $\mathcal{F}$-types, denoted by $x \sim_{\mathcal{F}} y$, if for every $A \in \mathcal{F}$ we have $x \in A$ if and only if $y \in A$.
\end{dfn}
One can see that the relation $\sim_{\mathcal{F}}$ is an equivalence relation on $X$. The system $\mathcal{F}$ induces a system $\tilde{\mathcal{F}}$ on $\frac{X}{\sim_{\mathcal{F}}}$ in a natural canonical way and there is a canonical surjective set system homomorphism from $(X,\mathcal{F})$ to $(\frac{X}{\sim_{\mathcal{F}}},\tilde{\mathcal{F}})$ where a surjective homomorphism between two set systems is a surjective map from the domain of the first system to the domain of the second one such that the image of every member of the first system under the map is a member of the second system.

\begin{dfn}\label{systemquotientofERsameFtype}
We call the set system $(\frac{X}{\sim_{\mathcal{F}}},\tilde{\mathcal{F}})$ obtained from $(X,\mathcal{F})$ in the above way the \textit{purification} of the system $(X,\mathcal{F})$. We call a set system \textit{purified} if it is equal to its purification.
\end{dfn}

\begin{dfn}\label{defalmostselfdual}
We call a set system $(X,\mathcal{F})$ \textit{almost self-dual} 
if $(X,\mathcal{F})^*$, the dual system of $(X,\mathcal{F})$, and $(\frac{X}{\sim_{\mathcal{F}}},\tilde{\mathcal{F}})$, the purification of the system $(X,\mathcal{F})$ defined in Definition \ref{systemquotientofERsameFtype}, are isomorphic to each other. 
\end{dfn}

\begin{rem}\label{pathdensex,ysametypeoutsideess}
Assume that $(X,\mathcal{F})$ is a well-graded family and $x$ and $y$ are distinct elements of $X$ with the same $\mathcal{F}$-types. Then $x,y \in X \setminus ess_{\mathcal{F}}(X)$.
In particular, if 
$X=ess_{\mathcal{F}}(X)$, then the system is purified.
\end{rem}
\begin{proof}
Clearly an element of $ess_{\mathcal{F}}(X)$ cannot have the same $\mathcal{F}$-type as an element in $X \setminus ess_{\mathcal{F}}(X)$.
So it is enough to show that every $x,y \in ess_{\mathcal{F}}(X)$ do not have the same $\mathcal{F}$-types. Assume for contradiction that $x,y \in ess_{\mathcal{F}}(X)$ have the same $\mathcal{F}$-types. 
Let $A,B \in \mathcal{F}$ be such that $x \in A$ and $x \not \in B$. 
So $y \in A$ and $y \not \in B$ too.
Hence, $x,y \in A \bigtriangleup B$. 
Since $(X,\mathcal{F})$ is a well-graded family, one sees that there is some $C \in \mathcal{F}$ 
containing exactly one of $x$ and $y$. But this contradicts with the assumption that $x$ and $y$ have the same $\mathcal{F}$-types.
\hfill $\square$ 
\end{proof}

\begin{rem}\label{seconddualisomorphtoERsameFtype}
Let $(X,\mathcal{F})$ be a set system. Then $(X,\mathcal{F})^{**}$, the dual of dual of the system (which is also called the second dual of the system), is isomorphic to the purification of 
$(X,\mathcal{F})$ namely $(\frac{X}{\sim},\tilde{\mathcal{F}})$. 
So a set system is isomorphic to its second dual if and only if it is purified.
\end{rem}

\begin{cor}
Every well-graded family $(X,\mathcal{F})$ with $X=ess_{\mathcal{F}}(X)$ is isomorphic to its second dual.
\end{cor}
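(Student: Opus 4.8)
The plan is to chain together the two immediately preceding remarks, since the hypotheses of this corollary are exactly tailored to trigger them in sequence. First I would invoke Remark \ref{pathdensex,ysametypeoutsideess}. That remark asserts that in a well-graded family, any two distinct elements of $X$ sharing the same $\mathcal{F}$-type must both lie outside $ess_{\mathcal{F}}(X)$; its stated consequence is that when $X=ess_{\mathcal{F}}(X)$ there can be no such pair, so the system is purified. Since the corollary's hypothesis gives us both well-gradedness and $X=ess_{\mathcal{F}}(X)$, we conclude at once that $(X,\mathcal{F})$ equals its own purification, i.e. $(X,\mathcal{F})$ is purified in the sense of Definition \ref{systemquotientofERsameFtype}.

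Second, I would apply Remark \ref{seconddualisomorphtoERsameFtype}, which identifies the second dual $(X,\mathcal{F})^{**}$ with the purification $(\tfrac{X}{\sim},\tilde{\mathcal{F}})$ and, as its final sentence records, yields the clean equivalence that a set system is isomorphic to its second dual precisely when it is purified. Feeding in the conclusion of the previous paragraph — that our system is purified — gives immediately that $(X,\mathcal{F})$ is isomorphic to $(X,\mathcal{F})^{**}$, which is exactly the assertion of the corollary.

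There is essentially no genuine obstacle here: the corollary is a corollary in the strict sense, being a two-line deduction from Remarks \ref{pathdensex,ysametypeoutsideess} and \ref{seconddualisomorphtoERsameFtype}. The only mild point of care is logical bookkeeping, namely making sure the hypothesis $X=ess_{\mathcal{F}}(X)$ is used only to secure purification (via well-gradedness in Remark \ref{pathdensex,ysametypeoutsideess}) and is not silently needed anywhere in the second-dual identification, which holds for arbitrary set systems. Once that is checked, the argument is complete; no combinatorial or graph-theoretic work on the one-inclusion graph is required, since all such work has already been absorbed into the two cited remarks.
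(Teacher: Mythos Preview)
Your proposal is correct and follows exactly the same approach as the paper's own proof, which is simply the one-line observation that the result follows by combining Remarks \ref{pathdensex,ysametypeoutsideess} and \ref{seconddualisomorphtoERsameFtype}. Your exposition merely unpacks this combination in more detail.
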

\begin{proof}
By combination of remarks \ref{pathdensex,ysametypeoutsideess} and \ref{seconddualisomorphtoERsameFtype} the result is clear. \hfill $\square$
\end{proof}

\begin{cor}
Every self-dual system is almost self-dual.
\end{cor}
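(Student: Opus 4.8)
The plan is to avoid any direct manipulation of the cut-set/incidence descriptions and instead reduce the statement to Remark \ref{seconddualisomorphtoERsameFtype}, which already identifies the second dual of a system with its purification. The only genuinely new ingredient I need is that the duality operation respects isomorphism: if $(X,\mathcal{F})$ and $(Y,\mathcal{G})$ are isomorphic set systems, then so are $(X,\mathcal{F})^*$ and $(Y,\mathcal{G})^*$. Granting this, the argument is a two-line chaining.

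First I would verify the functoriality of duality under isomorphism. Suppose the isomorphism is witnessed by a bijection $f:X \rightarrow Y$ with $\mathcal{G}=\{f(A):A \in \mathcal{F}\}$. This induces a bijection $A \mapsto f(A)$ between the members of $\mathcal{F}$ and those of $\mathcal{G}$, and hence a bijection $y_A \mapsto y_{f(A)}$ between the domains $Y_{\mathcal{F}}$ and $Y_{\mathcal{G}}$ of the two dual systems. I would then check that under this domain bijection, the member $\mathcal{A}_x$ of $\mathcal{F}^*$ (for $x \in X$) is carried exactly to the member $\mathcal{A}_{f(x)}$ of $\mathcal{G}^*$: indeed $x \in A$ iff $f(x) \in f(A)$, so $y_A \in \mathcal{A}_x$ corresponds to $y_{f(A)} \in \mathcal{A}_{f(x)}$. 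This gives the required isomorphism of dual systems. The one point demanding care here is that $\mathcal{F}^*$ is taken as a set rather than a multiset, so distinct $x$ with $\mathcal{A}_x=\mathcal{A}_{x'}$ are collapsed to a single member; but since $f$ is a bijection preserving the incidence relation, it sends coinciding columns to coinciding columns and distinct columns to distinct ones, so the identification is preserved on both sides and the induced map remains a well-defined bijection of the two systems.

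With functoriality in hand, the conclusion is immediate. Since $(X,\mathcal{F})$ is self-dual, we have $(X,\mathcal{F}) \cong (X,\mathcal{F})^*$. Applying the dual operation to both sides and using the functoriality just established yields $(X,\mathcal{F})^* \cong (X,\mathcal{F})^{**}$. By Remark \ref{seconddualisomorphtoERsameFtype}, the second dual satisfies $(X,\mathcal{F})^{**} \cong (\frac{X}{\sim_{\mathcal{F}}},\tilde{\mathcal{F}})$, the purification of $(X,\mathcal{F})$. Composing these two isomorphisms gives $(X,\mathcal{F})^* \cong (\frac{X}{\sim_{\mathcal{F}}},\tilde{\mathcal{F}})$, which is exactly the defining condition of almost self-duality in Definition \ref{defalmostselfdual}. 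Thus the only real work is the routine bookkeeping of the functoriality step; once that is recorded, the self-dual hypothesis plus the already-proven second-dual identity close the argument with no further computation.
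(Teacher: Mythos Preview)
Your proposal is correct and follows essentially the same approach as the paper: both arguments combine the functoriality of the dual operation with Remark~\ref{seconddualisomorphtoERsameFtype} identifying the second dual with the purification. The paper leaves the functoriality step implicit and routes through the intermediate observation that $(X,\mathcal{F})$ is purified, whereas you make the functoriality explicit and chain directly to $(X,\mathcal{F})^* \cong (\frac{X}{\sim_{\mathcal{F}}},\tilde{\mathcal{F}})$; these are cosmetic differences, not substantive ones.
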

\begin{proof}
Let $(X,\mathcal{F})$ be a self-dual set system. Thus, $(X,\mathcal{F})^{**}$, the dual of dual of the system, is isomorphic to the system.
But by Remark \ref{seconddualisomorphtoERsameFtype}, $(X,\mathcal{F})^{**}$ is isomorphic to the purification of the system. So $(X,\mathcal{F})$ is purified. Hence, the dual system is isomorphic to the purification of the system. So $(X,\mathcal{F})$ is almost self-dual. \hfill $\square$
\end{proof}

\section{Structure of well-graded families with small additionalities}\label{strofpartcubewithsmalladditionalities}

The main goals of this section are theorems \ref{characterizeextvc=1withequalities,right=impliesleft=} and \ref{structureofpathdensewithF=ess+2} where we characterize well-graded families with small additionalities (additionalities 1 and 2) in terms of the properties of their one-inclusion graphs and also VC-dimension.
On the way to these goals, we study one-inclusion graphs of well-graded families and prove several properties of them. These properties help us to obtain the mentioned characterizations.
Results of this section will be also used later in Section \ref{sectioncharselfanddualpartcube}. 
We remind that when we say that a directed graph is connected, we mean that its underlying undirected graph is connected. Also we consider the distance between two vertices in a directed graph as their distance in the underlying undirected graph.

\begin{pro}\label{FessEGFinequalities}
Assume that $(X,\mathcal{F})$ is a well-graded family.
Then $|ess_{\mathcal{F}}(X)|+1 \leqslant |\mathcal{F}|\leqslant |E(G_{\mathcal{F}})|+1.$
\end{pro}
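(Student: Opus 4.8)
The plan is to treat the two inequalities separately, using in both cases the fact that the one-inclusion graph $G_{\mathcal{F}}$ of a well-graded family is connected (as observed right after Definition \ref{defisomembedinfinitesystems}). Thus $G_{\mathcal{F}}$ has exactly $|\mathcal{F}|$ vertices and, being connected, admits a spanning tree $T$ with exactly $|\mathcal{F}|-1$ edges.

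For the right-hand inequality $|\mathcal{F}| \leqslant |E(G_{\mathcal{F}})|+1$, I would simply invoke the elementary fact that a connected graph on $n$ vertices has at least $n-1$ edges. Applying this with $n=|V(G_{\mathcal{F}})|=|\mathcal{F}|$ gives $|E(G_{\mathcal{F}})| \geqslant |\mathcal{F}|-1$, which immediately rearranges to the claim.

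For the left-hand inequality $|ess_{\mathcal{F}}(X)|+1 \leqslant |\mathcal{F}|$, I would fix a spanning tree $T$ of $G_{\mathcal{F}}$ and show that each essential element forces at least one edge of $T$ carrying its label. Concretely, fix $a \in ess_{\mathcal{F}}(X)$. By Remark \ref{inpartcubedgeswithlabelaiscutsets}, the set $C_a$ of edges labelled $a$ coincides with the cut-set $E(\mathcal{A}_a,\mathcal{A}_a^c)$, and since $a$ is essential both $\mathcal{A}_a$ and $\mathcal{A}_a^c$ are nonempty. Hence removing $C_a$ disconnects $G_{\mathcal{F}}$, so the spanning tree $T$ must contain at least one edge of $C_a$, that is, at least one edge with label $a$. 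Because distinct elements of $ess_{\mathcal{F}}(X)$ yield distinct labels and no single edge carries two labels, selecting one such tree edge for each essential element defines an injection from $ess_{\mathcal{F}}(X)$ into $E(T)$. This gives $|ess_{\mathcal{F}}(X)| \leqslant |E(T)| = |\mathcal{F}|-1$, as desired.

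The right inequality is routine; the only step needing care is the left one, and there the crux is precisely the cut-set description of the label classes furnished by Remark \ref{inpartcubedgeswithlabelaiscutsets}, which guarantees that every essential label must appear somewhere on the spanning tree. Once that is in hand, the counting via the injection into $E(T)$ is immediate.
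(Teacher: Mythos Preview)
Your proof is correct. The right-hand inequality is handled identically to the paper's argument. For the left-hand inequality you take a different but equally elementary route. The paper selects, for each essential element, one edge of $G_{\mathcal{F}}$ carrying that label, obtaining a subgraph $G'$ with $|ess_{\mathcal{F}}(X)|$ edges all distinctly labelled; it then observes that in any cycle of $G_{\mathcal{F}}$ each label must occur an even number of times, so $G'$ is acyclic (a forest), whence $|ess_{\mathcal{F}}(X)| = |E(G')| < |V(G')| \leqslant |V(G_{\mathcal{F}})| = |\mathcal{F}|$. You instead fix a spanning tree $T$ first and use the cut-set description of the label classes (Remark \ref{inpartcubedgeswithlabelaiscutsets}) to force each essential label onto at least one edge of $T$, yielding an injection $ess_{\mathcal{F}}(X) \hookrightarrow E(T)$. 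Both arguments ultimately use only the connectedness of $G_{\mathcal{F}}$. Your version has the mild advantage of citing a remark already proved in the paper, while the paper's version makes explicit the parity observation about labels on cycles, which is independently useful later (e.g.\ in showing trees arise when labels are distinct).
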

\begin{proof}
We first prove the right inequality. Since the system is well-graded, its one-inclusion graph $G_{\mathcal{F}}$ 
is connected. So $|E(G_{\mathcal{F}})| \geqslant |V(G_{\mathcal{F}})|-1=|\mathcal{F}|-1.$
Now we prove the left inequality. 
First we note that since the system is a well-graded family, by Remark \ref{|ess(X)|numberofdistinctlabels} every $x \in ess_{\mathcal{F}}(X)$ should appear as the label of at least one edge of $G_{\mathcal{F}}$.
Choose a subset $K$ of edges of $G_{\mathcal{F}}$ consisting of $|ess_{\mathcal{F}}(X)|$ edges with different labels. So every $x \in ess_{\mathcal{F}}(X)$ appears as the label of some edge in $K$. 
Now the set of edges in $K$ form a subgraph of $G_{\mathcal{F}}$. 
Ignore the directions of the edges of this subgraph and denote its underlying undirected graph by $G'$. 
Note that $G'$ might be disconnected. Also $|ess_{\mathcal{F}}(X)|=|K|=|E(G')|$. 
Since label of each edge of $G_{\mathcal{F}}$ is the single element in the symmetric difference of the members of $\mathcal{F}$ corresponding to its ends, it is not hard to see that in every cycle of $G_{\mathcal{F}}$, the number of appearance of every label is even. Therefore, since all labels of edges of $G'$ are different, $G'$ does not contain any cycle. So $G'$ is a tree or a forest.
Thus, 
$|ess_{\mathcal{F}}(X)|=|E(G')| < |V(G')| \leqslant |V(G_{\mathcal{F}})|= |\mathcal{F}|$.
It completes the proof. \hfill $\square$
\end{proof}

\begin{cor}
Every well-graded family has positive additionality.
\end{cor}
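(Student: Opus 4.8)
The plan is to read this off directly from the left-hand inequality of Proposition \ref{FessEGFinequalities}, which has just been established. By definition the additionality of the system is the quantity $|\mathcal{F}| - |ess_{\mathcal{F}}(X)|$, so the claim "positive additionality" is precisely the assertion that $|\mathcal{F}| - |ess_{\mathcal{F}}(X)| \geqslant 1$, i.e. $|\mathcal{F}| > |ess_{\mathcal{F}}(X)|$. Since $(X,\mathcal{F})$ is a well-graded family, Proposition \ref{FessEGFinequalities} applies and yields $|ess_{\mathcal{F}}(X)| + 1 \leqslant |\mathcal{F}|$; rearranging gives exactly $|\mathcal{F}| - |ess_{\mathcal{F}}(X)| \geqslant 1$.

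There is essentially no obstacle here: the corollary is a one-line consequence and contains no content beyond the left inequality of the proposition. If one wished to make the proof self-contained rather than merely citing the proposition, the only nontrivial ingredient to recall is the argument underlying that left inequality, namely that the edges of $G_{\mathcal{F}}$ realizing the $|ess_{\mathcal{F}}(X)|$ distinct labels span an acyclic subgraph (a forest) on at most $|\mathcal{F}|$ vertices, forcing strictly fewer edges than vertices and hence $|ess_{\mathcal{F}}(X)| < |\mathcal{F}|$. But since the proposition is already proved, the clean approach is simply to invoke it and rearrange.

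Thus I would write: by Proposition \ref{FessEGFinequalities} we have $|\mathcal{F}| \geqslant |ess_{\mathcal{F}}(X)| + 1$, whence the additionality $|\mathcal{F}| - |ess_{\mathcal{F}}(X)|$ is at least $1$ and in particular positive, completing the proof.
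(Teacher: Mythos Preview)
Your proposal is correct and matches the paper's own proof exactly: the paper simply cites Proposition~\ref{FessEGFinequalities} to obtain $|\mathcal{F}|-|ess_{\mathcal{F}}(X)| \geqslant 1$. Your additional remarks about the forest argument are accurate but unnecessary, as the corollary is indeed just a one-line rearrangement of the already-established left inequality.
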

\begin{proof}
By Proposition \ref{FessEGFinequalities} we have
$|\mathcal{F}|-|ess_{\mathcal{F}}(X)| \geqslant 1$.
\hfill $\square$
\end{proof}
\begin{dfn}\label{dfnlabeless}
Let $(X,\mathcal{F})$ be a set system.
For every subset $U$ of vertices of $G_{\mathcal{F}}$ we define 
$$st(U):=\bigcup_{v \in U}st(v),$$
$$below(U):=\{v \in V(G_{\mathcal{F}}):\exists u \in U, \ st(v) \subseteq st(u)\}$$
where we recall from Definition \ref{defoneincgraph} that $st(v)$ is the member of $\mathcal{F}$ corresponding to $v$.
Also define $ess_{\mathcal{F}}(U):=ess_{\mathcal{F}}(\bigcup_{v \in U} st(v))$.
\end{dfn}

\begin{lem}\label{sizeofbelow}
Let $(X,\mathcal{F})$ be a well-graded family with $\emptyset \in \mathcal{F}$ and 
additionality $r$ for some $r \in \mathbb{N}$.
Let $U$ be a nonempty subset of vertices of $G_{\mathcal{F}}$. Then we have 
$|st(U)|+1 \leqslant |below(U)| \leqslant |st(U)|+r.$
\end{lem}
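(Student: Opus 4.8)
The plan is to work throughout with the subfamily $\mathcal{H}:=\{st(v):v\in below(U)\}$, so that $|below(U)|=|\mathcal{H}|$, and to set $E:=ess_{\mathcal{F}}(X)$ and $r=|\mathcal{F}|-|E|$. Since $\emptyset\in\mathcal{F}$, every member of $\mathcal{F}$ is a subset of $E$ (an element lying in some member is automatically essential, cf.\ Remark~\ref{infliptoemtryeverymemberissubsetofess}), so $st(U)\subseteq E$ and $\mathcal{H}=\bigcup_{u\in U}\mathcal{F}_{\subseteq st(u)}$, where $\mathcal{F}_{\subseteq st(u)}:=\{A\in\mathcal{F}:A\subseteq st(u)\}$. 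Two structural facts will be used repeatedly. First, by Remark~\ref{rempathdenseforinfiniteandfinitesystems} each $\mathcal{F}_{\subseteq st(u)}$ is again well-graded (given $A,B$ inside $st(u)$, the interpolating $C$ it produces satisfies $C\subseteq A\cup B\subseteq st(u)$), and each such piece contains $\emptyset$; hence the induced subgraph $H:=G_{\mathcal{F}}[below(U)]$ is connected and its edge labels are exactly the elements of $st(U)$. Second, as in the proof of Proposition~\ref{FessEGFinequalities}, along any closed walk of $G_{\mathcal{F}}$ every label occurs an even number of times, because by Remark~\ref{inpartcubedgeswithlabelaiscutsets} each label class is a cut and a closed walk must cross each cut evenly.

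For the lower bound I would argue exactly as in Proposition~\ref{FessEGFinequalities}. For each $x\in st(U)$ choose $u\in U$ with $x\in st(u)$ and take a geodesic from $v_{\emptyset}$ to $v_{st(u)}$; all its vertices are subsets of $st(u)$, hence lie in $below(U)$, and it contains an edge $e_x$ labelled $x$. The edges $\{e_x:x\in st(U)\}$ have pairwise distinct labels, so by the parity fact they form a forest whose vertex set is contained in $below(U)$; a forest with $|st(U)|$ edges has at least $|st(U)|+1$ vertices, giving $|below(U)|\ge|st(U)|+1$ (the degenerate case $st(U)=\emptyset$ forces $U=\{v_{\emptyset}\}$ and equality).

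The upper bound is the substantive part, and I would first reduce it. Writing $W:=\mathcal{F}\setminus\mathcal{H}$ and using $|below(U)|=|\mathcal{F}|-|W|$, $r=|\mathcal{F}|-|E|$ and $st(U)\subseteq E$, the desired inequality $|below(U)|\le|st(U)|+r$ rearranges to the purely combinatorial claim $|E\setminus st(U)|\le|W|$. To prove this I would contract the connected subgraph $H$ of $G_{\mathcal{F}}$ to a single vertex $h$, obtaining a connected (multi)graph on vertex set $W\cup\{h\}$. Every label $x\in E\setminus st(U)$ still occurs after contraction: an $x$-labelled edge has the endpoint containing $x$ outside $\mathcal{H}$, so it is not absorbed into $h$. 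Choosing one edge $e_x$ per label $x\in E\setminus st(U)$ produces a subgraph $F^{*}$ with $|E\setminus st(U)|$ edges on at most $|W|+1$ vertices, and the claim follows once $F^{*}$ is shown to be a forest, since a forest has fewer edges than vertices.

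Establishing acyclicity of $F^{*}$ is the main obstacle, and this is where connectivity of $H$ and the parity fact combine. A cycle of $F^{*}$ avoiding $h$ is a genuine cycle of $G_{\mathcal{F}}$ with pairwise distinct labels, contradicting the parity fact. A cycle through $h$ uses two edges at $h$, which lift to $G_{\mathcal{F}}$-edges landing at two vertices $p,q\in below(U)$; joining $q$ to $p$ by a path inside the connected graph $H$ (whose labels all lie in $st(U)$) closes the cycle into a closed walk of $G_{\mathcal{F}}$. In this walk the label of one of the two $h$-edges, which lies in $E\setminus st(U)$, appears exactly once---once on that edge and nowhere else, since the remaining chosen edges carry distinct labels and the $H$-path carries only labels from $st(U)$---contradicting the parity fact. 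Hence $F^{*}$ is a forest, which yields $|E\setminus st(U)|\le|W|$ and thus the upper bound.
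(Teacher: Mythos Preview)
Your argument is correct, and the lower bound is essentially the same forest idea that underlies the paper's proof (the paper phrases it via a walk through $H$ and ``first appearance'' vertices $w_x$, but that is just another way of extracting an acyclic set of distinct-label edges inside $below(U)$).

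For the upper bound you take a genuinely different route. The paper does not rearrange to $|E\setminus st(U)|\le|W|$ or contract $H$; instead it repeats the walk trick on all of $G_{\mathcal{F}}$: take a walk from $v_{\emptyset}$ covering every vertex, and for each $x\in ess_{\mathcal{F}}(X)$ let $w_x$ be the first vertex along the walk with $x\in st(w_x)$. The $w_x$'s are pairwise distinct (two new elements cannot appear at the same step, since consecutive vertices differ by a single element), and for $x\notin st(U)$ one has $w_x\notin below(U)$ because $st(w_x)\not\subseteq st(u)$ for any $u\in U$. This already gives $|ess_{\mathcal{F}}(X)|-|st(U)|$ vertices outside $below(U)$, hence the bound. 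So the paper handles both inequalities with one short claim, while your approach recycles the forest idea by pushing it into the quotient graph $G_{\mathcal{F}}/H$; this is a little longer but arguably more conceptual, since it makes transparent that the upper bound is the lower bound applied ``outside $below(U)$'' after collapsing $below(U)$ to a point.
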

\begin{proof}
The inequalities can be easily verified for the case $U=\{v_{\emptyset}\}$.
We consider the other cases.
Let $H$ be the induced subgraph of $G_{\mathcal{F}}$ on the vertices of $below(U)$. Obviously $v_{\emptyset} \in below(U)$ and $U \subseteq below(U)$. Also since $\mathcal{F}$ is a well-graded family, for every $u \in below(U)$ different from $v_{\emptyset}$, there is a path between $v_{\emptyset}$ and $u$ such that for every vertex $z$ on the path we have $st(z) \subseteq st(u)$. But it implies that $z \in below(U)$. So the path is entirely inside the subgraph $H$.
It follows that $H$ is a connected subgraph. So there is a walk, denoted by W, in $H$ which starts from $v_{\emptyset}$ and passes through all vertices in $H$. 
Note that since $\emptyset \in \mathcal{F}$, $st(U)=ess_{\mathcal{F}}(U)$. 
Obviously, every $x \in st(U)$ appears in $st(v)$ for some vertex $v$ in $H$.
For every $x \in st(U)$, let $w_x$ be the vertex in $H$ such that $x$ appears in the set $st(w_x)$ for the first time since we started walking in $H$ on walk $W$ from its starting vertex $v_{\emptyset}$.

\vspace{1.5mm}

\textit{Claim.} For any two distinct $x,y \in st(U)$ we have $w_x \not =w_y$.

\vspace{1.5mm}

\textit{Proof of Claim.}
Assume for contradiction that $w_x=w_y$ for two distinct $x,y \in st(U)$. Let $c$ be the previous vertex of $w_x$ in the walk $W$ (since walk starts from $v_{\emptyset}$ and $w_x$ cannot be the same as $v_{\emptyset}$, there exists a previous vertex for $w_x$ in the walk). 
We have $x,y \in st(w_x)=st(w_y)$. But $x,y \not \in st(c)$ since $w_x$ is the first vertex in the order of the walk $W$ 
whose corresponding element of $\mathcal{F}$ contains $x$ and $y$. So $|st(c) \triangle st(w_x)|\geqslant 2$. 
But since $c$ and $w_x$ are connected, we have $|st(c) \triangle st(w_x)|=1$.
This is a contradiction.
\hfill \textit{Claim} $\square$

\vspace{1mm}

Since by the above claim $V(H)$ contains distinct $w_x$'s for distinct $x$'s in $st(U)$ and also it contains $v_{\emptyset}$ (which is different from all such $w_x$'s), then 
$|V(H)| \geqslant |st(U)|+1$.
Therefore, since $V(H)=below(U)$, we have $|st(U)|+1 \leqslant |below(U)|$.

Now we prove the second inequality in a similar way. Since $G_{\mathcal{F}}$ is connected, there is a walk $W'$ in $G_{\mathcal{F}}$ which starts from $v_{\emptyset}$ and passes through all vertices of it. Define $w_x$'s in a similar way as above for every 
$x \in ess_{\mathcal{F}}(X)$. Again similar to the above claim, for any two $x,y \in ess_{\mathcal{F}}(X)$ we have $w_x \not =w_y$. 
In particular, for any two $x,y \in ess_{\mathcal{F}}(X) \setminus st(U)$ we have $w_x \not =w_y$.
Moreover, for any $x \in ess_{\mathcal{F}}(X) \setminus st(U)$, we have 
$w_x \not \in below(U)$ since otherwise $st(w_x) \subseteq st(v) \subseteq st(U)$ for some $v \in U$, while
$x \in st(w_x)$ (by definition of $w_x$) but $x \not \in st(U)$ which is a contradiction.
Hence, for any $x \in ess_{\mathcal{F}}(X) \setminus st(U)$, $w_x$
is not a vertex of $H$.
So there are at least $|ess_{\mathcal{F}}(X)|-|st(U)|$ vertices of $G_{\mathcal{F}}$ outside of $H$.
Hence, there are at most $|\mathcal{F}|-|ess_{\mathcal{F}}(X)|+|st(U)|$ vertices in $H$. 
Since $|\mathcal{F}|=|ess_{\mathcal{F}}(X)|+r$, we have
$|below(U)| =|V(H)| \leqslant |\mathcal{F}|-|ess_{\mathcal{F}}(X)|+|st(U)| \leqslant |st(U)|+r.$ \hfill  $\square$ 
\end{proof}

\subsection{Structure of well-graded families with additionality 1}\label{strpartcubewithF-ess=1}

In Theorem \ref{characterizeextvc=1withequalities,right=impliesleft=}, 
we give some characterizations for well-graded families with additionality 1.
A characterization of extremal systems of VC-dimension at most $1$ was given in  \cite{MeszarosRonyaismallvc}(Proposition 2).
In Theorem \ref{characterizeextvc=1withequalities,right=impliesleft=}, 
we also give some characterizations for well-graded families of VC-dimension at most 1 and meanwhile, extend the mentioned result of \cite{MeszarosRonyaismallvc} by giving a few new characterizations for extremal systems of VC-dimension at most $1$. 

\begin{dfn}\label{defdownwardrootedtree}
We call a directed rooted tree \textit{uniformly directed} if the direction of every edge is from that vertex of the edge which is closer to the root to the other one.
\end{dfn}
\begin{dfn}\label{defonewaypath}
By a \textit{one-way path} we mean a directed path between two ending vertices $A$ and $B$ such that its edge directions are all towards one of its ending vertices, say $B$.
In other words, it is a path which is also a uniformly directed rooted tree and its root is one of its ending vertices.
\end{dfn}

We again remind that when we say that a directed graph is connected, we mean that its underlying undirected graph is connected.

\begin{thm}\label{characterizeextvc=1withequalities,right=impliesleft=}
Let $(X,\mathcal{F})$ be a set system. 
Then the following are equivalent.
\begin{enumerate}
\item{The system $(X,\mathcal{F})$ is a well-graded family with VC-dimension at most 1.}\label{partcubevcatmost1}
\item{The system $(X,\mathcal{F})$ is extremal with VC-dimension at most 1.}\label{extofvc1}
\item{The graph $G_{\mathcal{F}}$ is connected and the inequalities of conclusion of Proposition \ref{FessEGFinequalities} turn to equality.}\label{connectedandineqsareequ}
\item{The system $(X,\mathcal{F})$ is well-graded and right inequality of conclusion of Proposition \ref{FessEGFinequalities} turns to equality.}\label{pathdenseandrightinequisequ} 
\item{The system $(X,\mathcal{F})$ is well-graded with additionality 1
	 (or equivalently, the system is well-graded and the left inequality of conclusion of Proposition \ref{FessEGFinequalities} turns to equality).}\label{pathdenseandleftinequisequ} 
\item{The graph $G_{\mathcal{F}}$ is a tree and all labels on its edges are different.}\label{GFistreealllabelsaredifferent}
\item{The graph $G_{\mathcal{F}}$ is a tree with $|ess_{\mathcal{F}}(X)|+1$ vertices and $|ess_{\mathcal{F}}(X)|$ edges where labels of edges are elements of $ess_{\mathcal{F}}(X)$ and each one is used once.}\label{GFtree|ess|+1vertices}
\item{For every $A \in \mathcal{F}$, 
$G_{\mathcal{F}'}$ is a uniformly directed rooted tree with root $v_{\emptyset}$ 
and distinct labels on the edges where $\mathcal{F}'=flip(\mathcal{F}, A \rightarrow \emptyset)$.}\label{downwardtree}
\item{Same as \ref{downwardtree} with "every" replaced by "some".}\label{downwardtreeforsome}
\end{enumerate}
\end{thm}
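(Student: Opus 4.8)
The plan is to make statement (\ref{GFistreealllabelsaredifferent}), ``$G_{\mathcal F}$ is a tree with all edge-labels distinct'', the pivot of the whole equivalence, since it is the most concrete, and to route every other statement through it. I would first record the one structural fact that makes (\ref{GFistreealllabelsaredifferent}) so powerful: if $G_{\mathcal F}$ is a tree with distinct labels, then for any $v_A,v_B$ the unique $v_A$--$v_B$ path carries distinct labels, each edge toggles its label exactly once, so $A\triangle B$ is precisely the label-set of that path and $d_h(A,B)=d_{G_{\mathcal F}}(v_A,v_B)$; thus (\ref{GFistreealllabelsaredifferent}) already forces well-gradedness (Definition \ref{defisomembedinfinitesystems}). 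Combined with Remark \ref{inWGeveryelemofshatteredsetappearslabel} (a shattered pair would force some label onto $\geq 2$ edges, impossible here) this yields $VCdim\le 1$, i.e.\ (\ref{GFistreealllabelsaredifferent})$\Rightarrow$(\ref{partcubevcatmost1}). By Remark \ref{|ess(X)|numberofdistinctlabels} the distinct labels are exactly the $|ess_{\mathcal F}(X)|$ elements of the essential domain, each used once, and a tree on $|V(G_{\mathcal F})|=|\mathcal F|$ vertices has $|\mathcal F|-1=|ess_{\mathcal F}(X)|$ edges; this bookkeeping identity instantly gives (\ref{GFistreealllabelsaredifferent})$\Rightarrow$ each of (\ref{connectedandineqsareequ}),(\ref{pathdenseandrightinequisequ}),(\ref{pathdenseandleftinequisequ}),(\ref{GFtree|ess|+1vertices}). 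Conversely, each of (\ref{connectedandineqsareequ}),(\ref{GFtree|ess|+1vertices}) plus connectivity forces $|E(G_{\mathcal F})|=|V|-1$ (a tree) with ``number of labels $=$ number of edges'' (hence distinct), returning to (\ref{GFistreealllabelsaredifferent}); for (\ref{pathdenseandrightinequisequ})$\Rightarrow$(\ref{GFistreealllabelsaredifferent}) the tree comes for free and distinctness of labels follows from the cut description of Remark \ref{inpartcubedgeswithlabelaiscutsets}, since deleting the label-$a$ edges $C_a$ from a tree yields $|C_a|+1$ pieces but must yield exactly the two connected halfspaces $\mathcal A_a,\mathcal A_a^c$, forcing $|C_a|=1$.

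The genuinely substantial implication, and the main obstacle, is (\ref{pathdenseandleftinequisequ})$\Rightarrow$(\ref{GFistreealllabelsaredifferent}): additionality $1$ must be upgraded from the \emph{left} equality of Proposition \ref{FessEGFinequalities} to the \emph{right} one, i.e.\ one must prove $G_{\mathcal F}$ is actually a tree and not merely connected. Here I would reuse the subgraph $G'$ built in the proof of Proposition \ref{FessEGFinequalities}: selecting one edge per label gives an acyclic subgraph with exactly $|ess_{\mathcal F}(X)|=|\mathcal F|-1=|V|-1$ edges, hence a spanning tree of $G_{\mathcal F}$ whose labels are distinct with each label used exactly once. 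It then suffices to exclude any edge $e\notin G'$. Such an $e$, say with label $a$, together with the unique $G'$-path joining its endpoints forms a cycle; since $e$ crosses the cut $C_a=E(\mathcal A_a,\mathcal A_a^c)$ of Remark \ref{inpartcubedgeswithlabelaiscutsets} and the only label-$a$ edge of $G'$ is the corresponding tree edge $e'$, this path must traverse $e'$. As $G_{\mathcal F}$ is bipartite (Remark \ref{1incgraphisbipartite}) and simple, the cycle has length $\ge 4$, so the path contributes at least two further edges whose labels are distinct, different from $a$, and occur exactly once around the cycle --- contradicting the fact, established inside the proof of Proposition \ref{FessEGFinequalities}, that every label occurs an even number of times on every cycle. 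Hence $G_{\mathcal F}=G'$ is a tree with distinct labels. This argument genuinely requires additionality $1$: for larger additionality $G'$ is a non-spanning forest and the needed $G'$-path need not exist.

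For the extremal statement I would use Remark \ref{Grecohamdisequalsgraphdis} for the trivial direction (\ref{extofvc1})$\Rightarrow$(\ref{partcubevcatmost1}), and for the converse count shattered sets directly: under $VCdim\le 1$ the family $sht(\mathcal F)$ consists exactly of $\emptyset$ together with the singletons $\{x\}$, $x\in ess_{\mathcal F}(X)$, so $|sht(\mathcal F)|=|ess_{\mathcal F}(X)|+1$, which equals $|\mathcal F|$ by the additionality $1$ already obtained; since equality in the right-hand Sandwich inequality is $s$-extremality, and $s$-extremal coincides with extremal (as recalled after Definition \ref{defextremalsystems}), the system is extremal, giving (\ref{partcubevcatmost1})$\Rightarrow$(\ref{extofvc1}). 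To close the circle I still need (\ref{partcubevcatmost1})$\Rightarrow$(\ref{pathdenseandleftinequisequ}): this is where Sauer--Shelah (Theorem \ref{SauerShelahLemma}) enters, applied to $Y=ess_{\mathcal F}(X)$. Because every non-essential element is constant across $\mathcal F$, the trace map $A\mapsto A\cap ess_{\mathcal F}(X)$ is injective, so $|\mathcal F|=|\mathcal F\cap ess_{\mathcal F}(X)|\le 1+|ess_{\mathcal F}(X)|$; combined with the left inequality of Proposition \ref{FessEGFinequalities} this forces additionality $1$.

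Finally, for the flip characterizations (\ref{downwardtree})--(\ref{downwardtreeforsome}) I would first observe that a bit-flip leaves the symmetric difference of any two members unchanged, hence preserves the underlying undirected graph of the one-inclusion graph together with all edge-labels, reversing only the orientations of the edges whose label was flipped; consequently ``tree with distinct labels'' is invariant under $flip(\mathcal F,A\to\emptyset)$, which gives (\ref{downwardtreeforsome})$\Rightarrow$(\ref{GFistreealllabelsaredifferent}) at once. For (\ref{GFistreealllabelsaredifferent})$\Rightarrow$(\ref{downwardtree}) note that $\emptyset\in\mathcal F'=flip(\mathcal F,A\to\emptyset)$ (it is the image of $A$), and for any $C\in\mathcal F'$ the unique $v_\emptyset$--$v_C$ path has distinct labels whose toggles must build up $\emptyset\triangle C=C$; so the labels along it are exactly the elements of $C$, added one at a time, the vertex-sets strictly increase along the path, and every edge is oriented away from the root $v_\emptyset$ --- that is, $G_{\mathcal F'}$ is a uniformly directed rooted tree (Definition \ref{defdownwardrootedtree}) with distinct labels. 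Since (\ref{downwardtree})$\Rightarrow$(\ref{downwardtreeforsome}) is immediate, all nine statements are equivalent.
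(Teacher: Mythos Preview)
Your proof is correct and the logical skeleton is sound: routing everything through statement~(\ref{GFistreealllabelsaredifferent}) works, and each implication you sketch goes through as written. The argument is, however, organised quite differently from the paper's.

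The main divergence is in the handling of the substantial implication out of (\ref{pathdenseandleftinequisequ}). The paper proves (\ref{pathdenseandleftinequisequ})$\Rightarrow$(\ref{downwardtree}) by first passing to $\mathcal F'=flip(\mathcal F,A\to\emptyset)$ and then invoking Lemma~\ref{sizeofbelow} (the $below(U)$ counting lemma) together with two claims about how one-way paths from $v_\emptyset$ can overlap; this is where the machinery of Section~\ref{strofpartcubewithsmalladditionalities} is really exercised. Your route (\ref{pathdenseandleftinequisequ})$\Rightarrow$(\ref{GFistreealllabelsaredifferent}) is more elementary: you observe that the ``one-edge-per-label'' subgraph $G'$ from the proof of Proposition~\ref{FessEGFinequalities} is forced, under additionality~$1$, to be a spanning tree, and then you kill any extra edge with the cycle-parity observation (every label occurs evenly on a cycle) already recorded inside that same proof. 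This avoids Lemma~\ref{sizeofbelow} entirely for the present theorem. A second difference is that the paper imports (\ref{extofvc1})$\Leftrightarrow$(\ref{GFistreealllabelsaredifferent}) from \cite{MeszarosRonyaismallvc}, whereas you close the loop (\ref{partcubevcatmost1})$\Rightarrow$(\ref{extofvc1}) internally via $|sht(\mathcal F)|=|ess_{\mathcal F}(X)|+1=|\mathcal F|$ and the $s$-extremal/extremal identification. Finally, your (\ref{pathdenseandrightinequisequ})$\Rightarrow$(\ref{GFistreealllabelsaredifferent}) uses the half-space connectivity of Remark~\ref{inpartcubedgeswithlabelaiscutsets} to force $|C_a|=1$, while the paper argues directly that a repeated label on a tree would violate well-gradedness; both are short.

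What your approach buys is self-containment and a shorter critical step; what the paper's approach buys is that Lemma~\ref{sizeofbelow}, once proved, also drives the additionality-$2$ characterisation in Theorem~\ref{structureofpathdensewithF=ess+2}, so the investment there is amortised.
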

\begin{proof}
\ref{extofvc1} $\Rightarrow$ \ref{partcubevcatmost1}) This holds since by Remark \ref{Grecohamdisequalsgraphdis}, 
extremal set systems are well-graded.

\ref{extofvc1} $\Leftrightarrow$ \ref{GFistreealllabelsaredifferent})
This is Proposition 2 of \cite{MeszarosRonyaismallvc}.

\ref{GFtree|ess|+1vertices} $\Rightarrow$ \ref{connectedandineqsareequ}, \ref{GFtree|ess|+1vertices} $\Rightarrow$ \ref{pathdenseandrightinequisequ}, \ref{GFtree|ess|+1vertices} $\Rightarrow$ \ref{pathdenseandleftinequisequ})
and
\ref{downwardtreeforsome} $\Rightarrow$ \ref{GFistreealllabelsaredifferent})
Easy.

\ref{GFtree|ess|+1vertices} $\Rightarrow$ \ref{GFistreealllabelsaredifferent} and \ref{downwardtree} $\Rightarrow$ \ref{downwardtreeforsome}) Trivial.

\ref{GFistreealllabelsaredifferent} $\Rightarrow$ \ref{GFtree|ess|+1vertices})
It is easy to see that $(X,\mathcal{F})$ is a well-graded family.
So by using Remark \ref{|ess(X)|numberofdistinctlabels},
every $x \in ess_{\mathcal{F}}(X)$ appears as the label of some edge of $G_{\mathcal{F}}$. Since labels of different edges are different, every $x \in ess_{\mathcal{F}}(X)$ appears as the label of exactly one edge. 
So there are exactly $|ess_{\mathcal{F}}(X)|$ edges. It follows that 
$G_{\mathcal{F}}$ is a tree with $|ess_{\mathcal{F}}(X)|+1$ vertices and $|ess_{\mathcal{F}}(X)|$ edges and different edges have different labels.

\vspace{1mm}

\ref{connectedandineqsareequ} $\Rightarrow$ \ref{GFistreealllabelsaredifferent})
Since $|\mathcal{F}|=|E(G_{\mathcal{F}})|+1$ and 
$|V(G_{\mathcal{F}})|=|\mathcal{F}|$ we have $|V(G_{\mathcal{F}})|=|E(G_{\mathcal{F}})|+1$. So since $G_{\mathcal{F}}$ is connected, $G_{\mathcal{F}}$ is a tree.
On the other hand, by Remark \ref{|ess(X)|numberofdistinctlabels} 
every $x \in ess_{\mathcal{F}}(X)$ appears as the label of at least one edge in $E(G_{\mathcal{F}})$.
Now since by assumption $|ess_{\mathcal{F}}(X)|=|E(G_{\mathcal{F}})|$, every edge in $E(G_{\mathcal{F}})$ has a different label.

\vspace{1mm}

\ref{pathdenseandrightinequisequ} $\Rightarrow$ 
\ref{GFistreealllabelsaredifferent})
We recall that the one-inclusion graphs of 
well-graded families are connected.
By assumption 
$|\mathcal{F}|=|E(G_{\mathcal{F}})|+1$. 
Note that $|\mathcal{F}|=|V(G_{\mathcal{F}})|$.
So the one-inclusion graph $G_{\mathcal{F}}$ is a tree. We prove that labels on edges are different. Assume not and there are two edges with the same labels. So there exist a path $P$ in $G_{\mathcal{F}}$ such that contains two edges with the same labels. Let $v_1$ and $v_2$ be the two ending vertices of $P$.
Since there are edges with the same labels in the path, $|st(v_1) \triangle st(v_2)|$ is strictly less than the number of edges of the path.
On the other hand, $P$ is the only path between $v_1$ and $v_2$ since $G_{\mathcal{F}}$ is a tree. It follows that 
$d_{G_{\mathcal{F}}}(v_1,v_2)$ is equal to the number of edges of $P$.
But now $|st(v_1) \triangle st(v_2)| < d_{G_{\mathcal{F}}}(v_1,v_2)$ which is a contradiction with the assumption of well-gradedness of $(X,\mathcal{F})$.
So labels are all different.

\vspace{1mm}

\ref{pathdenseandleftinequisequ} $\Rightarrow$ \ref{downwardtree})
Let $A \in \mathcal{F}$ be arbitrary and $\mathcal{F}':=flip(\mathcal{F}, A \rightarrow \emptyset)$.
One can see that $(X,\mathcal{F}')$ is also a well-graded family and similar to $\mathcal{F}$, 
we have $|ess_{\mathcal{F}'}(X)|+1 = |\mathcal{F}'|$. So additionality of $(X,\mathcal{F}')$ is 1.

\vspace{1mm}

\textit{Claim 1.} For every $B \in \mathcal{F}'$ we have 
$|\{C \in \mathcal{F}': C \subseteq B\}|=|B|+1$.

\vspace{1mm}

\textit{Proof of Claim 1.}
Fix some $B \in \mathcal{F}'$ and let $U:=\{v_B\}$. 
So $st(U)=B$ where we remind that $st(U)$ was defined in Definition \ref{dfnlabeless}.
One can see that 
$\{v_C:C \in \mathcal{F}', C \subseteq B\}=below(U)$.
Now by applying Lemma \ref{sizeofbelow} for the system $(X,\mathcal{F}')$ and the defined $U$ and since the parameter $r$ appeared in that lemma in here is $1$, we have 
$|\{C \in \mathcal{F}': C \subseteq B\}|=|\{v_C:C \in \mathcal{F}', C \subseteq B\}|=|below(U)|=|st(U)|+1=|B|+1.$
\hfill \textit{Claim 1} $\square$

\vspace{1mm}

\textit{Claim 2.}
Let $P_1, P_2$ be two one-way path's in $G_{\mathcal{F}'}$ started from $v_{\emptyset}$ with vertices $V(P_1)=\{v_{\emptyset},w_1,\ldots,w_r\}$ and $V(P_1)=\{v_{\emptyset},z_1,\ldots,z_s\}$. Then $P_1$ and $P_2$ can have common vertices only in an initial part of themselves.
More precisely, there is some $t \leqslant min(r,s)$ such that $w_i=z_i$ for every $1 \leqslant i \leqslant t$ and these vertices and also $v_{\emptyset}$ are the only common vertices of $P_1$ and $P_2$.

\vspace{1mm}

\textit{Proof of claim 2.}
Assume that $v_C \in V(P_1) \cap V(P_2)$ for some $C \in \mathcal{F}'$.
It is enough to show that the set of vertices before $v_C$ (in the order of the one-way path) in $P_1$ is the same as the one in $P_2$.
Since $P_1$ and $P_2$ are directed path's starting from $v_{\emptyset}$ and because of the way that $G_{\mathcal{F}'}$ is defined, the number of vertices of $G_{\mathcal{F}'}$ before $v_C$ in each of $P_1$ and $P_2$ is exactly $|C|$. Note that every vertex before $v_C$ in each of $P_1$ and $P_2$ corresponds to a proper subset of $C$ in $\mathcal{F}'$. But now by using Claim 1, the number of elements of $\mathcal{F}'$ which are proper subsets of $C$ is $|C|$. So vertices before $v_C$ in both $P_1$ and $P_2$ should be the same. 
\hfill \textit{Claim 2} $\square$

\vspace{1mm}

Let $\{A_1,\ldots,A_r\}$ be a set of maximal elements of $\mathcal{F}'$ (i.e. for each $j\leqslant r$, $A_j$ is not a proper subset of any other element of $\mathcal{F}'$) and assume that it covers $ess_{\mathcal{F}'}(X)$ which means that $ess_{\mathcal{F}'}(X) \subseteq \bigcup_{i\leqslant r}A_i$. 
Note that such a set exists since for example the set of all maximal elements of $\mathcal{F}'$ covers $ess_{\mathcal{F}'}(X)$.
Also note that by Remark \ref{infliptoemtryeverymemberissubsetofess}, for every $A_i$ we have 
$A_i \subseteq ess_{\mathcal{F}}(X)=ess_{\mathcal{F}'}(X)$ which follows that $ess_{\mathcal{F}'}(X) = \bigcup_{i\leqslant r}A_i$. 
Since $\mathcal{F}'$ is a well-graded family, for each $i \leqslant r$ there is a one-way path in $G_{\mathcal{F}'}$ denoted by $P_i$ directed from $v_{\emptyset}$ to $v_{A_i}$. 
Since $A_i$'s are among maximal elements of $\mathcal{F}'$, none of $P_i$'s is contained in another one.
Let $H$ be the subgraph of $G_{\mathcal{F}'}$ consisting of the union of all $P_i$'s. Since $v_{\emptyset} \in V(H)$, $H$ is connected and moreover, by Claim 2 is a tree. In fact, $H$ is a uniformly directed rooted tree with root $v_{\emptyset}$.
Now we show that $H=G_{\mathcal{F}'}$.

It is easy to see that every element of $\bigcup_{i\leqslant r} A_i$ appears as the label of some edge of $H$. It follows that $H$ has at least $|\bigcup_{i\leqslant r} A_i|$ edges. Therefore, since $H$ is a tree it has at least $|\bigcup_{i\leqslant r} A_i|+1$ vertices.
Also since 
$ess_{\mathcal{F}'}(X)=\bigcup_{i\leqslant r}A_i$,
we have $|\bigcup_{i\leqslant r} A_i|+1=|ess_{\mathcal{F}'}(X)|+1$.
Thus $|V(H)|\geqslant |\bigcup_{i\leqslant r} A_i|+1=|ess_{\mathcal{F}'}(X)|+1$. 
One can see that $ess_{\mathcal{F}}(X)=ess_{\mathcal{F}'}(X)$ and so now by using the assumption $|\mathcal{F}|=|ess_{\mathcal{F}}(X)|+1$ we have $|V(G_{\mathcal{F}'})|=|\mathcal{F}'|=|\mathcal{F}|=|ess_{\mathcal{F}}(X)|+1=|ess_{\mathcal{F}'}(X)|+1 \leqslant |V(H)|$.
On the other hand, 
$V(H) \subseteq V(G_{\mathcal{F}'})$.
Hence, $V(H)=V(G_{\mathcal{F}'})$.
Now we claim that $G_{\mathcal{F}'}$ does not have any more edges than $H$. 
Assume for contradiction that for some $v_1,v_2 \in V(H)$, $v_1v_2$ is an edge (with direction from $v_1$ to $v_2$) in $E(G_{\mathcal{F}'}) \setminus E(H)$. One can see that there is no $i \leqslant r$ such that $v_1$ and $v_2$ both belong to $P_i$ since there is no edge in $E(G_{\mathcal{F}'})$ between any two non-consecutive vertices of any $P_i$. So there are distinct $m,n \leqslant r$ such that $v_1 \in V(P_m) \setminus V(P_n)$ and $v_2 \in V(P_n) \setminus V(P_m)$. Let $P_3$ be the one-way path formed by the union of the initial part of $P_m$ from $v_{\emptyset}$ to $v_1$ and the edge $v_1v_2$.
Also let $P_4$ be the initial part of $P_n$ from $v_{\emptyset}$ to $v_2$. Now one can see that the common vertices of two one-way path's $P_3$ and $P_4$ do not form an initial part of them which is a contradiction with Claim 2.
Hence, $E(G_{\mathcal{F}'}) = E(H)$. 
So we have shown that $H=G_{\mathcal{F}'}$.
Thus, $G_{\mathcal{F}'}$ is a uniformly directed rooted tree with root $v_{\emptyset}$. 
Therefore, we have $|\mathcal{F}'|=|V(G_{\mathcal{F}'})|=|E(G_{\mathcal{F}'})|+1$ and moreover, since $\mathcal{F}'$ is a well-graded family, by (\ref{pathdenseandrightinequisequ} $\Rightarrow$ 
\ref{GFistreealllabelsaredifferent}) of this theorem one sees that the labels on edges of $G_{\mathcal{F}'}$ are distinct.

\vspace{1mm}

\ref{partcubevcatmost1} $\Rightarrow$ \ref{GFistreealllabelsaredifferent})
The proof is similar to the proof of one direction of Proposition 2 of \cite{MeszarosRonyaismallvc} but with some modifications.
First we show that all labels of edges of $G_{\mathcal{F}}$ are different.
Assume for contradiction that $a$ is the label of two different edges $v_{A}v_{B}$ and $v_{C}v_{D}$ (with directions from $v_{A}$ to $v_{B}$ and from $v_{C}$ to $v_{D}$) for some $A,B,C,D \in \mathcal{F}$. So $B=A\cup \{a\}$, $D=C \cup \{a\}$ and $A$ and $C$ do not contain $a$. Also one can see that $A,B,C$ and $D$ are all distinct. Since $a \in B \cap D$, there is some $b$ different from $a$ in $B \triangle D$. Now one can verify that the set $\{a,b\}$ is shattered by four sets $A,B,C$ and $D$ which is a contradiction with the assumption that VC-dimension of $\mathcal{F}$ is at most 1. So the labels of edges of $G_{\mathcal{F}}$ are different. It follows that there is no cycle in $G_{\mathcal{F}}$ since in each cycle, every label repeats with an even number of times.
Since $(X,\mathcal{F})$ is a well-graded family, $G_{\mathcal{F}}$ is connected.
So $G_{\mathcal{F}}$ is a tree and labels of its edges are different.
\hfill $\square$
\end{proof}

\subsection{Structure of well-graded families with additionality 2}\label{strpartcubewithF-ess=2}
In this subsection, we characterize well-graded families with additionality 2 by using the one-inclusion graphs.
We first give an example of such systems. 

\begin{exa}\label{exampleofadditionality2}
Let $X:=\{a,b,c,d,e,f,g,h,i,j,k,l\}$ and 
$$\mathcal{F}:=\{a,ab,abc,ac,ace,acef,aceg,ah,abcd,abi,abj,abjk,abjl\}.$$
Then $(X,\mathcal{F})$ is a set system where any member of $\mathcal{F}$, for example $abc$, is seen as a subset of $X$, in this example $\{a,b,c\}$.
The set system $(X,\mathcal{F})$ is a well-graded family.
Also $ess_{\mathcal{F}}(X)=\{b,c,d,e,f,g,h,i,j,k,l\}$ and so $|\mathcal{F}|=|ess_{\mathcal{F}}(X)|+2$.
Figure 1 illustrates the 
one-inclusion graph of the system $(X,\mathcal{F})$.
\end{exa}

\begin{center}

\begin{tikzpicture}
\usetikzlibrary{arrows}
\tikzset{
    edge/.style={->,> = latex'}
}


\vertex (a) at (0,0) [label=above:$v_{a}$] {};
\vertex (ac) at (-1,1) [label=above:$v_{ac}$] {};
\vertex (ab) at (1,1) [label=below:$v_{ab}$] {};
\vertex (abc) at (0,2) [label=left:$v_{abc}$] {};
\vertex (ace) at (-2,1) [label=below:$v_{ace}$] {};
\vertex (aceg) at (-3.25,0.5) [label=below:$v_{aceg}$] {};
\vertex (acef) at (-3.25,1.5) [label=below:$v_{acef}$] {};
\vertex (abcd) at (0,2.75) [label=above:$v_{abcd}$] {};
\vertex (ah) at (0,-0.75) [label=below:$v_{ah}$] {};
\vertex (abi) at (2.25,1.5) [label=above:$v_{abi}$] {};
\vertex (abj) at (2.25,0.5) [label=below:$v_{abj}$] {};
\vertex (abjk) at (3.5,1) [label=above:$v_{abjk}$] {};
\vertex (abjl) at (3.5,0) [label=below:$v_{abjl}$] {};


\draw[edge] (a) -- (ac) node[midway, above] {$c$};
\draw[edge] (a) -- (ab) node[midway, right] {$b$};
\draw[edge] (ac) -- (abc) node[midway, right] {$b$};
\draw[edge] (ab) -- (abc) node[midway, right] {$c$};
\draw[edge] (ac) -- (ace) node[midway, below] {$e$};
\draw[edge] (ace) -- (aceg) node[midway, below] {$g$};
\draw[edge] (ace) -- (acef) node[midway, above] {$f$};
\draw[edge] (abc) -- (abcd) node[midway, right] {$d$};
\draw[edge] (a) -- (ah) node[midway, right] {$h$};
\draw[edge] (ab) -- (abi) node[midway, above] {$i$};
\draw[edge] (ab) -- (abj) node[midway, below] {$j$};
\draw[edge] (abj) -- (abjk) node[midway, above] {$k$};
\draw[edge] (abj) -- (abjl) node[midway, below] {$l$};
\end{tikzpicture}

Figure 1: The one-inclusion graph of the set system of Example \ref{exampleofadditionality2}.

\end{center}

We fix some notations.
By a \textit{parallel-directed-labelled $C_4$ cycle} we mean a labelled $C_4$ cycle graph $a_1a_2a_3a_4$ 
where the edges $a_1a_2$ and $a_4a_3$ in cycle have the same labels and also parallel directions (i.e. they are directed either as $a_1$ to $a_2$ and $a_4$ to $a_3$ or directed as $a_2$ to $a_1$ and $a_3$ to $a_4$) and similarly, $a_2a_3$ and $a_4a_1$ have the same labels (but different from labels of the edges $a_1a_2$ and $a_4a_3$) and parallel directions.

\begin{dfn}\label{defgraphstructureofF=essX+2}
We call a labelled directed graph 
$G$ a \textit{semitree} if 
$G$ consists of a parallel-directed-labelled $C_4$ cycle $a_1a_2a_3a_4$ such that four (possibly empty) trees are attached to each of the vertices $a_1,a_2,a_3$ and $a_4$ and moreover, labels of edges outside of $C_4$ cycle are different from each other and from the labels of the $C_4$ cycle.
\end{dfn}

As an example of a semitree one can see the one-inclusion graph illustrated in Figure 1.
It is easy to see that if $G$ is a semitree and $ab$ is an edge and $o \in V(G)$, then $d_G(o,a)$ and $d_G(o,b)$ differ exactly by one unit.
\begin{dfn}
We call a semitree $G$ \textit{uniformly directed} if either $G$ is a parallel-directed-labelled $C_4$ cycle or there is a 
vertex $o \in V(G)$ 
such that for every edge $ab$ of $G$, the direction of the edge 
is from $a$ to $b$ if and only if $d_G(o,b)> d_G(o,a)$. 
\end{dfn}

The following result characterizes well-graded families with additionality 2 via the one-inclusion graphs.

\begin{thm}\label{structureofpathdensewithF=ess+2}
Assume that $(X,\mathcal{F})$ is a set system. 
Then the following are equivalent.
\begin{enumerate}
\item{The system $(X,\mathcal{F})$
is a well-graded family with additionality 2. 
}\label{pathdensewithF=ess+2}
\item{For every $D \in \mathcal{F}$, the graph $G_{\mathcal{F}'}$
is a uniformly directed semitree  
where $\mathcal{F}':=flip(\mathcal{F}, D \rightarrow \emptyset)$.}\label{1-incgraphofeveryflipissemitree}
\item{Same as \ref{1-incgraphofeveryflipissemitree} with "every" replaced by "some".}\label{1-incgraphofsomeflipissemitree}
\item{The graph $G_{\mathcal{F}}$ is a semitree graph.}\label{1-incgraphissemitree}
\end{enumerate}
\end{thm}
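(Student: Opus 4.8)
The plan is to prove the cycle of implications $\ref{pathdensewithF=ess+2} \Rightarrow \ref{1-incgraphofeveryflipissemitree} \Rightarrow \ref{1-incgraphofsomeflipissemitree} \Rightarrow \ref{1-incgraphissemitree} \Rightarrow \ref{pathdensewithF=ess+2}$, mirroring the structure of the additionality-1 theorem and leaning heavily on Lemma \ref{sizeofbelow} with parameter $r=2$. The implication $\ref{1-incgraphofeveryflipissemitree} \Rightarrow \ref{1-incgraphofsomeflipissemitree}$ is trivial, and $\ref{1-incgraphofsomeflipissemitree} \Rightarrow \ref{1-incgraphissemitree}$ should follow by observing that the flip operation $flip(\mathcal{F}, D \rightarrow \emptyset)$ only relabels directions and shifts which member sits at the origin, so being a semitree (an undirected/labelled structural property up to the $C_4$ and attached trees) is preserved when we undo the flip; here I would invoke Remark \ref{reverseofoneincgraph} and the fact that a flip is a composition of bit-flips, each of which reverses the cut-set $C_a$ and hence does not change the underlying labelled cycle-plus-trees shape. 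The genuinely substantive steps are $\ref{pathdensewithF=ess+2} \Rightarrow \ref{1-incgraphofeveryflipissemitree}$ and $\ref{1-incgraphissemitree} \Rightarrow \ref{pathdensewithF=ess+2}$.

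For $\ref{pathdensewithF=ess+2} \Rightarrow \ref{1-incgraphofeveryflipissemitree}$, I would first reduce to the flipped system $\mathcal{F}' := flip(\mathcal{F}, D \rightarrow \emptyset)$, which by Remark \ref{infliptoemtryeverymemberissubsetofess} again has additionality $2$, contains $\emptyset$, and has every member a subset of $ess_{\mathcal{F}'}(X)$. The key counting input is Lemma \ref{sizeofbelow} with $r=2$: for any $B \in \mathcal{F}'$, the number of members of $\mathcal{F}'$ contained in $B$ lies in $\{|B|+1, |B|+2\}$. Arguing as in Claim 1 and Claim 2 of Theorem \ref{characterizeextvc=1withequalities,right=impliesleft=}, the ``slack'' of $2$ instead of $1$ means two one-way paths from $v_{\emptyset}$ to maximal elements can share an initial segment and then \emph{diverge and reconverge exactly once}, creating a single parallel-directed-labelled $C_4$ (the two coordinates realizing the local reconvergence), after which the tree structure resumes. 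I would formalize this by counting distinct first-appearance vertices $w_x$ along a spanning walk exactly as in Lemma \ref{sizeofbelow}, where the discrepancy $|V(H)| - |st(U)|$ being at most $2$ forces at most one ``extra'' vertex beyond a tree, and analyzing where that extra vertex can sit: it must close off a $C_4$ whose opposite edges carry equal labels with parallel directions (this is exactly the structure making the excess contribution equal to one member rather than two), while all edges off the cycle have pairwise-distinct labels distinct from the cycle labels, by the same even-cycle-label argument used in Proposition \ref{FessEGFinequalities}. The uniform-directedness with root $v_{\emptyset}$ follows since in $\mathcal{F}'$ every edge points from the smaller to the larger set and distance from $v_{\emptyset}$ equals cardinality.

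For $\ref{1-incgraphissemitree} \Rightarrow \ref{pathdensewithF=ess+2}$, I would verify well-gradedness directly from the semitree shape using Remark \ref{rempathdenseforinfiniteandfinitesystems}: given $A, B \in \mathcal{F}$ with $|A \triangle B| \geqslant 2$, I must produce an intermediate $C$. If the geodesic between $v_A$ and $v_B$ avoids the $C_4$ this is immediate from the tree structure; if it uses the cycle, the parallel-directed-labelled condition guarantees two distinct geodesics through the two ``sides'' of the $C_4$, and either side supplies a strict intermediate vertex. Then I compute the additionality: by Remark \ref{|ess(X)|numberofdistinctlabels}, $|ess_{\mathcal{F}}(X)|$ equals the number of distinct labels, which in a semitree is (number of off-cycle edges) plus $2$ (the two cycle labels); since $G_{\mathcal{F}}$ is a semitree it has exactly one independent cycle, so $|E(G_{\mathcal{F}})| = |V(G_{\mathcal{F}})|$, giving $|\mathcal{F}| = |V(G_{\mathcal{F}})| = |E(G_{\mathcal{F}})|$, and combining these label counts yields $|\mathcal{F}| - |ess_{\mathcal{F}}(X)| = 2$ precisely. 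The main obstacle I anticipate is the divergence-reconvergence analysis in $\ref{pathdensewithF=ess+2} \Rightarrow \ref{1-incgraphofeveryflipissemitree}$: showing that the single unit of excess in Lemma \ref{sizeofbelow} is realized by exactly one $C_4$ with the \emph{parallel-directed equal-label} structure — rather than, say, a longer even cycle or two separate defects — requires carefully ruling out alternatives via the well-gradedness (geodesic) condition together with the bipartiteness from Remark \ref{1incgraphisbipartite}, and this is where I would spend the most care.
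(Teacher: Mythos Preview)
Your overall plan matches the paper's: the same cycle of implications, the same reliance on Lemma \ref{sizeofbelow} with $r=2$ for the hard direction, and essentially identical arguments for $\ref{1-incgraphofsomeflipissemitree} \Rightarrow \ref{1-incgraphissemitree}$ (a flip only reverses some edge directions and preserves the parallel-directed-labelled $C_4$ and the attached trees) and for $\ref{1-incgraphissemitree} \Rightarrow \ref{pathdensewithF=ess+2}$ (a semitree has $|E|=|V|$ and exactly $|\mathcal{F}|-2$ distinct labels, so Remark \ref{|ess(X)|numberofdistinctlabels} gives additionality $2$).

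Where you differ is the internal organization of $\ref{pathdensewithF=ess+2} \Rightarrow \ref{1-incgraphofeveryflipissemitree}$. You propose a ``diverge and reconverge exactly once'' picture tracked through first-appearance vertices $w_x$ along a spanning walk. The paper instead pivots on \emph{incoming degree}: it first proves (its Claim 1) that $G_{\mathcal{F}'}$ has a unique vertex $a_0$ of incoming degree exactly $2$ and that every other vertex has incoming degree at most $1$; each of the three sub-assertions (existence of an indegree-$\geqslant 2$ vertex, no indegree $\geqslant 3$, uniqueness) is a short application of Lemma \ref{sizeofbelow}. It then builds the $C_4$ explicitly as $d_0 b_0 a_0 c_0$, where $b_0, c_0$ are the two predecessors of $a_0$ and $d_0$ is forced, again by Lemma \ref{sizeofbelow}, to be the unique vertex in $below(\{a_0\})$ of cardinality $|st(a_0)|-2$. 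Finally (its Claim 2) any cycle must contain a vertex of indegree $2$ at its maximum, hence equals this $C_4$. This indegree framing dispatches exactly the obstacle you flag---ruling out longer even cycles or multiple separate defects---in a few lines, whereas your divergence-reconvergence argument, while sound in spirit, would most likely end up re-deriving the indegree statement to close that gap. Both routes work; the paper's is more direct for the uniqueness-of-the-cycle step.
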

\begin{proof}
($\ref{pathdensewithF=ess+2} \Rightarrow \ref{1-incgraphofeveryflipissemitree}$) 
Fix some $D \in \mathcal{F}$. For simplifying the notations, 
we use the notation $G$ instead of $G_{\mathcal{F}'}$, the one-inclusion graph associated to $\mathcal{F}':=flip(\mathcal{F}, D \rightarrow \emptyset)$.
One can see that $(X,\mathcal{F}')$ is also a well-graded family and by Remark \ref{infliptoemtryeverymemberissubsetofess}, its additionality is 2.
So $G$ is a connected graph where we remind that when we say that a directed graph is connected, we mean that its underlying undirected graph is connected.
Also note that $\emptyset \in \mathcal{F}'$.

\vspace{1mm}

\textit{Claim 1.} There is a unique vertex in $G$ with incoming degree $2$. Also the incoming degrees of other vertices are at most 1.

\textit{Proof of claim 1.}
We first show that there is at least one vertex with incoming degree at least $2$. Assume for contradiction that every vertex of $G$ has incoming degree at most $1$. 
Note that $v_{\emptyset}$ is a vertex in $G$ with incoming degree zero. 
Now it is not hard to see that $G$ does not have any cycle
since if there is a cycle $C$, then the vertex $v_0 \in V(C)$ with $|st(v_0)|=max(\{|st(v)|:v \in V(C)\})$ has incoming degree 2 in $C$ 
which is in contradiction with our assumption.
So since $G$ is connected, it is a tree. Thus, $|\mathcal{F}'|=|V(G)|=|E(G)|+1$. Now by Theorem \ref{characterizeextvc=1withequalities,right=impliesleft=}(\ref{pathdenseandrightinequisequ} $\Rightarrow$ \ref{connectedandineqsareequ}) we have 
$|\mathcal{F}'|=|ess_{\mathcal{F}'}(X)|+1$ which is a contradiction with the fact that $(X,\mathcal{F}')$ has additionality 2.
So there are vertices with incoming degrees at least $2$.

Now we show that incoming degree of every vertex in $G$ is at most $2$. 
Assume for contradiction that there are $a,b,c,d \in V(G)$ such that $ba,ca,da$ are three edges with directions incoming to the vertex $a$.
One can see that $below(\{b\}) \subseteq below(\{a\})$ while $a,c,d \not \in below(\{b\})$ and $a,c,d \in below(\{a\})$ where these notations were defined in Definition \ref{dfnlabeless}.
So we have $|below(\{a\})| \geqslant |below(\{b\})|+3$.
Since $(X,\mathcal{F}')$ is a well-graded family, there is a one-way path in $G$ from $v_{\emptyset}$ to $b$.
It is not hard to see that the number of vertices on this path is $|st(b)|+1$
and all of these vertices belong to $below(\{b\})$. So $|below(\{b\})| \geqslant |st(b)|+1$.
Also it is easily seen that $|st(b)|+1=|st(a)|=|st(\{a\})|$.
Hence, $|below(\{b\})| \geqslant |st(\{a\})|$.
Combining the above inequalities, we get
$|below(\{a\})| \geqslant |st(\{a\})|+3$ which is a contradiction with Lemma \ref{sizeofbelow} and the fact that additionality is $2$. So the incoming degree of every vertex of $G$ is at most $2$.

So far by combining the above paragraphs we have shown that there are vertices with incoming degrees exactly $2$. Now we show that such a vertex is unique.
Assume for contradiction that $a_0,a_1$ are two vertices with incoming degrees $2$ and $b_0a_0$, $c_0a_0$ and also $b_1a_1$ and $c_1a_1$ are the directed edges incoming to $a_0$ and $a_1$ respectively. 
Note that the sets $\{b_0,c_0\}$ and $\{b_1,c_1\}$ can intersect.
We divide the situation to the following two cases.

\textit{Case I.} In this case we assume that none of $st(a_0)$ and $st(a_1)$ is subset of the other one (or equivalently $a_0 \not \in below(\{a_1\})$ and $a_1 \not \in below(\{a_0\})$).
One can see that $st(\{b_0,c_0\})=st(\{a_0\})$ and $st(\{b_1,c_1\})=st(\{a_1\})$. So $st(\{b_0,c_0,b_1,c_1\})=st(\{a_0,a_1\})$.
By the assumption that none of $st(a_0)$ and $st(a_1)$ is a subset of the other one we have $a_0,a_1 \in below(\{a_0,a_1\}) \setminus below(\{b_0,c_0,b_1,c_1\})$.
Also clearly $below(\{b_0,c_0,b_1,c_1\}) \subseteq below(\{a_0,a_1\})$.
Now by combining these facts with the left inequality of Lemma \ref{sizeofbelow} we have
$$|below(\{a_0,a_1\})| \geqslant |below(\{b_0,c_0,b_1,c_1\})|+2
\geqslant |st(\{b_0,c_0,b_1,c_1\})|+1+2=|st(\{a_0,a_1\})|+3$$ which is a contradiction with Lemma \ref{sizeofbelow} and the fact that additionality is $2$.

\vspace{1mm}

\textit{Case II.} In this case we assume that one of $st(a_0)$ or $st(a_1)$ is a subset of the other one.
Without loss of generality, we may assume that $st(a_1) \subseteq st(a_0)$. 
Because of the well-gradedness of the system $(X,\mathcal{F}')$, there is a directed one-way path from $a_1$ to $a_0$. Since $b_0a_0$ and $c_0a_0$ are the only incoming edges to $a_0$, this path should pass through exactly one of $b_0$ or $c_0$. Hence, either $st(a_1) \subseteq st(b_0)$ or $st(a_1) \subseteq st(c_0)$.
We may assume that $st(a_1) \subseteq st(b_0)$. 
Again, because of the well-gradedness of the system $(X,\mathcal{F}')$, there is a one-way path $P$ which starts from $v_{\emptyset}$ and ends in $b_0$ and passes through $a_1$.
Since $b_1a_1$ and $c_1a_1$ are the only incoming edges to $a_1$, the path $P$ must also pass either through $b_1$ or $c_1$ and not both.
Without loss of generality, assume that $P$ passes through $b_1$ and not $c_1$.
One can see that $|V(P)|=|st(b_0)|+1$ and also $V(P) \cup \{c_1\} \subseteq below(\{b_0\})$. So $|below(\{b_0\})| \geqslant |V(P)|+1$.
Hence, $|below(\{b_0\})| \geqslant |st(\{b_0\})|+2$.
Note that $a_0,c_0 \in below(\{a_0\}) \setminus below(\{b_0\})$ and $below(\{b_0\}) \subseteq below(\{a_0\})$.
Therefore, it is not difficult to verify that we have
$$|below(\{a_0\})| \geqslant |below(\{b_0\})|+2 \geqslant |st(\{b_0\})|+2+2
=|st(\{a_0\})|-1+4=|st(\{a_0\})|+3$$ which is a contradiction with Lemma \ref{sizeofbelow} and the fact that additionality is $2$.

So in both cases we got contradiction. Therefore, there is a unique vertex with incoming degree $2$. Also all other vertices have incoming degrees at most one.
\hfill \textit{Claim 1} $\square$

\vspace{1mm}

Denote the unique vertex with incoming degree $2$ by $a_0$
and assume that $b_0a_0$ and $c_0a_0$ are the only incoming edges to $a_0$ for some vertices $b_0$ and $c_0$.
Obviously $b_0,c_0 \in below(\{a_0\})$.
Because of the well-gradedness of the system $(X,\mathcal{F}')$, there is a one-way path $P$ which starts from $v_{\emptyset}$ and ends in $a_0$. It is easy to see that $P$ has $|st({a_0})|+1$ vertices.
Since $b_0$ and $c_0$ are the only incoming edges to $a_0$, $P$ must pass through exactly one of them. 
Without loss of generality, we assume that $P$ passes through $b_0$. By right inequality of Lemma \ref{sizeofbelow} and the fact that the additionality is $2$, we have $|below(\{a_0\})|\leqslant |st(\{a_0\})|+2=|V(P)|+1$.
Also it is obvious that $V(P) \subseteq below(\{a_0\})$ and $c_0 \in below(\{a_0\}) \setminus V(P)$. So combining these facts, we have $below(\{a_0\})=V(P) \cup \{c_0\}$. 
Having this, it is not difficult to see that there is a unique vertex $v \in below(\{a_0\})$ with $|st(\{v\})|=|st(\{a_0\})|-2$ and it is the vertex before $b_0$ in the path $P$ (with respect to the one-way direction on $P$). Denote this vertex by $d_0$.
Again, by well-gradedness of $(X,\mathcal{F}')$, there is a one-way path $P'$ from $v_{\emptyset}$ to $c_0$. Let $y$ be the vertex before $c_0$ in $P'$.
Obviously $V(P') \subseteq below(\{a_0\})$ and $y \in below(\{a_0\})$. 
Also we have $|st(\{y\})|=|st(\{c_0\})|-1=|st(\{a_0\})|-2$. Since $d_0$ was the unique vertex with the mentioned property, we have $y=d_0$. Therefore, $d_0b_0$ and $d_0c_0$ are two directed edges with directions from $d_0$ to $b_0$ and $d_0$ to $c_0$ respectively.
We remind that the directions of $c_0a_0$ and $b_0a_0$ are from $b_0$ and $c_0$ to $a_0$.
Now $d_0b_0a_0c_0$ forms a $C_4$ cycle (which we denote it by $C_0$).
One can see that $lab(d_0b_0)=lab(c_0a_0)$ and $lab(d_0c_0)=lab(b_0a_0)$.
So $C_0$ is a parallel-directed-labelled $C_4$ cycle.
Also if $d_0 \not =v_{\emptyset}$, the initial part of the above defined path $P$ from $v_{\emptyset}$ to $d_0$ would be a one-way path between those two vertices. 

\vspace{1mm}

\textit{Claim 2.} $C_0$ is the only cycle in $G$.

\textit{Proof of claim 2.} 
Assume that $C$ is a cycle in $G$. We show that $C$ is the same as $C_0$. Assume that $a$ is a vertex of $C$ such that the set $st(a)$ is maximal (in the sense of set-inclusion order) in the $\{st(v):v \in V(C)\}$. Because of such a maximality mentioned for $a$, 
it has incoming degree $2$ and outgoing degree $0$ in $C$. Let $ba$ and $ca$ be the incoming edges to $a$ in $C$. 
Since $a_0$ 
has incoming degree $2$ and by Claim 1, the vertex with incoming degree 2 in $G$ 
is unique and other vertices of $G$ have incoming degrees less than or equal to $1$, we have $a=a_0$.
We remind that $b_0a_0$ and $c_0a_0$ are two incoming edges to $a_0$. So $ba$ and $ca$ are exactly these two incoming edges to $a_0 (=a)$ in $G$. 
Without loss of generality, we may assume that $b=b_0$ and $c=c_0$. 
Assume that $e$ and $k$ are vertices of $C$ (different from $a,b$ and $c$ and not necessarily distinct from each other) such that $eb$ and $kc$ are edges of $C$. 
In order to show that two cycles $C$ and $C_0$ are the same, it would be enough to show that $d_0=e=k$. Assume for contradiction that this does not hold and without loss of generality, assume that $d_0 \not = e$.
Note that in this case the direction of the edge $eb$ is from $b$ to $e$ since otherwise $b$ would have incoming degree $2$ (since direction of $d_0b$ is also incoming to $b$) which contradicts 
Claim 1 and uniqueness of $a (=a_0)$ as the only vertex with incoming degree greater than 1.
We first show that in this situation $d_0$ is different from $k$ too.
If $d_0=k$, then again by using the uniqueness of $a$ as the only vertex with incoming degree greater than 1 and also the mentioned fact that direction of the edge $eb$ is from $b$ to $e$, one sees that the arc of $C$ which starts from $b$, passes through $e$ and ends in $k (=d_0)$ is a one-way path directed from $b$ to $d_0$.
So we have $|st(b)|<|st(e)|<|st(d_0)|$. But since $d_0b$ is also an edge in $G$ with direction from $d_0$ to $b$, we have $|st(d_0)| < |st(b)|$ which is a contradiction. So $d_0$ is different from $k$ too.
A similar argument as before, this time for the edge $ck$ instead of $eb$, shows that the direction of the edge $ck$ must be from $c$ to $k$.
Now we start from the vertex $e$ and move on edges of $C$ according to the directions on the edges until when it is impossible to move anymore which means that we have arrived to a vertex $t$ such that the next edge of $C$ adjacent to it is also incoming to $t$. So in this case $t$ is a vertex with incoming degree $2$. Hence, again by Claim 1 and uniqueness of $a_0$ as the only vertex with incoming degree greater than $1$, we have $t=a \ (=a_0)$. 
But since direction of the edge $ck$ is from $c$ to $k$, the above movements on the edges of $C$
must have stopped before arriving to the vertex $c$. Thus, $t$ must be different from $a$ which is a contradiction. 
Therefore, $d_0=e$ and also by a similar argument, we have $d_0=k$.
It follows that $C=C_0$ and there is only one cycle in $G$ which is a $C_4$ cycle.
\hfill \textit{Claim 2} $\square$

\vspace{1mm}

Since the system $(X,\mathcal{F}')$ is a well-graded family, $G$ is connected.
Also since by Claim 2, the graph $G$ contains exactly one cycle which is the above mentioned parallel-directed-labelled $C_4$ cycle $d_0b_0a_0c_0$, there are four (possibly empty) trees attached to each of the vertices $d_0,b_0,a_0$ and $c_0$.
Again, because of well-gradedness of $(X,\mathcal{F}')$, it is not very difficult to verify that the labels of edges outside of the $C_4$ cycle are different from each other and from the labels of the $C_4$ cycle. Thus, $G$ is a semitree graph. 
We remind that $G$ contains $v_{\emptyset}$. Let $e$ be any arbitrary edge of $G$ connecting two vertices $v_A$ and $v_B$ for some $A,B \in \mathcal{F}'$. 
So $|A \triangle B|=1$.
Because of the well-gradedness of $(X,\mathcal{F}')$, 
we have $d_G(v_{\emptyset},v_B)=|B|$ and $d_G(v_{\emptyset},v_A)=|A|$.
So the direction of $e$ is from $v_A$ to $v_B$ if and only if $A \subseteq B$ and $|B \setminus A|=1$ if and only if $d_G(v_{\emptyset},v_B) > d_G(v_{\emptyset},v_A)$.
It follows that $G$ is a uniformly directed semitree.

($\ref{1-incgraphofeveryflipissemitree} \Rightarrow \ref{1-incgraphofsomeflipissemitree}$) Obvious.

($\ref{1-incgraphofsomeflipissemitree} \Rightarrow \ref{1-incgraphissemitree}$) 
By assumption, there is some $D \in \mathcal{F}$ such that $G_{\mathcal{F}'}$ is a uniformly directed semitree where $\mathcal{F}':=flip(\mathcal{F}, D \rightarrow \emptyset)$.
First note that $\mathcal{F}=flip(\mathcal{F}', \emptyset \rightarrow D)$.
We remind that the bit-flip and flip operations on a set system do not change the edges of the one-inclusion graph and their labels and only might change the directions of some of the edges.
Moreover, one can see that after flip operations any parallel-directed-labelled $C_4$ cycle also turns to a parallel-directed-labelled $C_4$ cycle.
So since the one-inclusion graph of $\mathcal{F}'$ is a semitree graph and $\mathcal{F}$ is obtained from $\mathcal{F}'$ by flip operations, then $G_{\mathcal{F}}$ is also a semitree graph.

($\ref{1-incgraphissemitree} \Rightarrow \ref{pathdensewithF=ess+2}$)
Assume that $G_{\mathcal{F}}$ is a semitree graph.
Therefore, by definition 
labels of the edges outside of the $C_4$ cycle of $G_{\mathcal{F}}$ are distinct from each other and from the labels of edges of the $C_4$ cycle. Also every pair of non-adjacent edges of the $C_4$ cycle have the same labels but different from the labels of the other pair.
So it is not hard to verify that 
$\mathcal{F}$ is a well-graded family. 
Note that in a semitree, the number of edges and vertices are the same. Also remind that the labels of the parallel edges in $C_4$ cycle are the same. So the number of distinct labels in $G_{\mathcal{F}}$ is $|\mathcal{F}|-2$. On the other hand, by Remark $\ref{|ess(X)|numberofdistinctlabels}$ 
we know that 
$|ess_{\mathcal{F}}(X)|$ is equal to the number of distinct labels in $G_{\mathcal{F}}$. So $|\mathcal{F}|-2=|ess_{\mathcal{F}}(X)|$. It follows that the additionality of the system is equal to 2. \hfill $\square$
\end{proof}

\vspace{1mm}

The following statement determines the VC-dimension of well-graded families with additionality $2$.

\begin{rem}\label{VCdimadditionality2}
\begin{enumerate}
\item{If the VC-dimension of a set system is less than or equal to 1, then the additionality of it is less than or equal to 1 too. So every set system with additionality 2 has VC-dimension at least 2.}\label{VCdimadditionality2vclessthaneq1}

\item{The VC-dimension of every well-graded family with additionality 2 is equal to $2$.}\label{VCdimadditionality2WGvcis2}
\end{enumerate}
\end{rem}
\begin{proof}
\ref{VCdimadditionality2vclessthaneq1})
If VC-dimension of a set system $(X,\mathcal{F})$ with nonempty $\mathcal{F}$ is $0$, then $|\mathcal{F}|=1$, $ess_{\mathcal{F}}(X)=\emptyset$ and additionality is 1. So assume that $VCdim(\mathcal{F})=1$. Let $D:=ess_{\mathcal{F}}(X)$ and $\mathcal{F}':=\mathcal{F} \cap D$. 
It is easily seen that $|\mathcal{F}'|=|\mathcal{F}|$ and also $VCdim(\mathcal{F}')=1$. Now by applying Sauer-Shelah lemma (Theorem \ref{SauerShelahLemma}) on the set system $(D,\mathcal{F}')$ we have
$$|\mathcal{F}|=|\mathcal{F}'| \leqslant \sum_{i=0}^{1}\binom{|D|}{i}=|D|+1=|ess_{\mathcal{F}}(X)|+1.$$
So the additionality of the system is less than or equal to 1.

\vspace{1mm}

\ref{VCdimadditionality2WGvcis2})
Let $(X,\mathcal{F})$ be a well-graded family with additionality 2. 
So by part \ref{VCdimadditionality2vclessthaneq1}, VC-dimension of the system is at least 2.
By Theorem $\ref{structureofpathdensewithF=ess+2}(\ref{pathdensewithF=ess+2} \Rightarrow \ref{1-incgraphissemitree})$, 
the graph $G_{\mathcal{F}}$ is a semitree graph. Assume that $a$ and $b$ are the labels of the edges of the parallel-directed-labelled $C_4$ cycle of this graph. 
It worth mentioning and not hard to see that $\{a,b\}$ as a subset of $X$ is shattered by $\mathcal{F}$.
We claim that there is no three element subset of $X$ which is shattered by $\mathcal{F}$. 
The reason is that if $\{w_1,w_1,w_3\} \subseteq X$ is shattered, 
then by Remark \ref{inWGeveryelemofshatteredsetappearslabel}, each $w_i$ appears as the label of at least $4$ edges of $G_{\mathcal{F}}$. But one can see that since $G_{\mathcal{F}}$ is a semitree graph and by definition of semitree graphs this is impossible to happen.
Therefore, $VCdim(\mathcal{F})=2$.
\hfill $\square$

\end{proof}

\section{Characterization of self-and-dual well-graded families}\label{sectioncharselfanddualpartcube}

The main goal of this section is Theorem \ref{characterizationselfanddualpathdense}, where we characterize self-and-dual well-graded families.
Note that apart from its own interest, this characterization will have some applications in studying the graphs which we will discuss in the next section.
This section uses the results of previous sections in particular Section \ref{strofpartcubewithsmalladditionalities} and the structural characterizations of the systems with additionalities 1 and 2 in there. 
Similar to the previous sections, many techniques of this section work on the base of analysing the one-inclusion graphs of the set systems.
We first define some new notions and state a few lemmas about them.

\begin{dfn}\label{defgettingcloserfar}
Let $v$ be a vertex of a connected directed graph $G$ and $e$ an edge of the graph connecting two vertices $w$ and $z$ and directed from $w$ to $z$.
We say that (the direction of) $e$ is getting closer to $v$ if 
$d_G(w,v) > d_G(z,v)$. Also we say that $e$ is getting far from $v$ if $d_G(w,v) < d_G(z,v)$.
\end{dfn}

\begin{rem}\label{indirectedtreeeveryedgegetingcloseorfar}
In a directed tree, 
for any edge $e$ and vertex $v$, either $e$ is getting closer to $v$ or getting far from it.
\end{rem}

\begin{dfn}
We call a vertex $v$ of a connected directed graph $G$ a \textit{source} if every edge of $G$ is getting far from $v$. Also we call $v$ a \textit{sink} if every edge is getting closer to $v$.
\end{dfn}

\begin{rem}\label{sourcesinkindirectedtrees}
A connected directed graph $G$ has at most one source and at most one sink. 
Moreover, the only directed tree which has both source and sink is a one-way path.
\end{rem}
\begin{proof}
Let $G$ be a connected directed graph.
It is not hard to see that for any two vertices of $G$, every edge of the shortest path connecting those vertices together is getting closer to one of them and getting far from the other one.
So at least one of them cannot be sink. It follows that $G$ has at most one sink. A similar argument shows that $G$ has at most one source.

Now assume that $T$ is a directed tree and has a source $w_0$ and a sink $z_0$.
Let $P$ be the path between $w_0$ and $z_0$ in $T$.
Since $w_0$ is source and $z_0$ is sink, $P$ is a one-way path directed from $w_0$ to $z_0$. 
We claim that $T$ is the same as the path $P$.
Assume not. 
Hence, $T$ has an edge $e$ which is not an edge of $P$ but is adjacent to exactly one of vertices of $P$. 
Now one can verify that depending on the direction of $e$, either $e$ is getting closer to both vertices $w_0$ and $z_0$ or is getting far from both of them. But both cases contradict the assumption that $w_0$ is a source and $z_0$ is a sink.
So one sees that the only directed tree having both source and sink is a one-way path. \hfill $\square$
\end{proof}

\begin{lem}\label{indualpartcubewithGFtreesomeproperties}
Assume that $(X,\mathcal{F})$ is a dual well-graded family. Also assume that $G_{\mathcal{F}}$ is a directed tree with distinct labels.
Then the following hold.
\begin{enumerate}
\item{Every vertex of degree at least $3$ of $G_{\mathcal{F}}$ is either a source or a sink.}\label{indualpartcubewithGFtreeeveryvertexdeg3orbiggerissourceorsink}
\item{If a vertex $w$ of $G_{\mathcal{F}}$ of degree at least 2 is a source, then $\emptyset \in \mathcal{F}$ and 
$w=v_{\emptyset}$. Similarly, if a vertex $w$ of degree at least 2 is a sink, then $X \in \mathcal{F}$ and 
$w=v_X$.}\label{indualpartcubewithGFtreSourceSinkcorrespondemptyamdX}
\end{enumerate}
\end{lem}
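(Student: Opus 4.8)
The plan is to run both parts through a single \emph{blocking observation} about the members of the dual system, using that by Remark \ref{inpartcubedgeswithlabelaiscutsets} each member $\mathcal{A}_a$ (for $a\in ess_{\mathcal{F}}(X)$) is, as a set of vertices of $G_{\mathcal{F}}$, the connected component of $G_{\mathcal{F}}-C_a$ lying on the head side of the unique edge labelled $a$, while the only other possible members of $\mathcal{F}^*$ are $\emptyset$ and $Y_{\mathcal{F}}$ (arising from non-essential elements). Since $G_{\mathcal{F}}$ is a tree with distinct labels, removing the edge $e$ labelled $a$ splits it into exactly the two subtrees $\mathcal{A}_a$ and $\mathcal{A}_a^c$, and the only vertex outside $\mathcal{A}_a$ adjacent to it is the tail of $e$. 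The key fact I record is this: if $w$ is the tail of an outgoing edge $e$ and $\mathcal{A}$ is the head-component of $e$ (so $w\notin\mathcal{A}$ and $w$ is the unique external neighbour of $\mathcal{A}$), then $\mathcal{A}\cup\{w\}$ is a member of $\mathcal{F}^*$ \emph{only if} $\deg(w)=2$ and the other edge at $w$ is incoming. Indeed $\mathcal{A}\cup\{w\}$ is a component of $G_{\mathcal{F}}$ minus a single edge precisely when $\deg(w)=2$, and then it is a head-component (hence a genuine member $\mathcal{A}_d$) only when that other edge points into $w$; moreover $\mathcal{A}\cup\{w\}\neq\emptyset$ (it contains $w$) and $\mathcal{A}\cup\{w\}\neq Y_{\mathcal{F}}$ (as $\deg(w)\ge 2$ leaves a nonempty complement).

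The second ingredient is a first-step argument for the well-graded dual. Suppose $\mathcal{A}\subsetneq\mathcal{M}$ with $\mathcal{A},\mathcal{M}\in\mathcal{F}^*$. Because $\mathcal{F}^*$ is well-graded, $G_{\mathcal{F}^*}$ is a connected partial cube, so either $|\mathcal{M}\setminus\mathcal{A}|=1$ and $\mathcal{M}=\mathcal{A}\cup\{p\}$, or there is a neighbour of $\mathcal{A}$ on a shortest $\mathcal{A}$–$\mathcal{M}$ path of the form $\mathcal{A}\cup\{p\}\in\mathcal{F}^*$ with $p\in\mathcal{M}\setminus\mathcal{A}$ (apply Remark \ref{rempathdenseforinfiniteandfinitesystems} to descend to an adjacent member). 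Now assume $\mathcal{A}$ is the head-component of an outgoing edge at a vertex $w$ and that $w\in\mathcal{M}$. Since $\mathcal{A}\cup\{p\}$ is a genuine halfspace (it is neither $\emptyset$ nor, as in the previous paragraph, $Y_{\mathcal{F}}$), it is connected, which forces $p$ to be adjacent to $\mathcal{A}$; as $w$ is the unique external neighbour of $\mathcal{A}$, we get $p=w$. Thus $\mathcal{A}\cup\{w\}\in\mathcal{F}^*$, contradicting the blocking observation whenever $\deg(w)\ge 3$, or $\deg(w)=2$ with its other edge also outgoing.

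For part \ref{indualpartcubewithGFtreeeveryvertexdeg3orbiggerissourceorsink}, assume $w=v_A$ has degree at least $3$ and is neither a source nor a sink, so $w$ has an outgoing edge labelled $a$ and an incoming edge labelled $c$. Take $\mathcal{A}:=\mathcal{A}_a$ and $\mathcal{M}:=\mathcal{A}_c$. As the $c$-edge points into $w$, its head-component $\mathcal{A}_c$ contains $w$ together with all of $w$'s side of that edge; in particular $\mathcal{A}_a\subseteq\mathcal{A}_c$ and $w\in\mathcal{A}_c\setminus\mathcal{A}_a$, so $\mathcal{A}\subsetneq\mathcal{M}$ with $w\in\mathcal{M}$. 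Since $\deg(w)\ge 3$, the first-step argument yields a contradiction; hence every vertex of degree at least $3$ is a source or a sink. (No complementation is needed here, since ``neither source nor sink'' already supplies both an outgoing and an incoming edge at $w$.)

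For part \ref{indualpartcubewithGFtreSourceSinkcorrespondemptyamdX}, let $w=v_A$ be a source of degree at least $2$ and suppose for contradiction that $A\neq\emptyset$; fix $c_0\in A$. If $c_0$ is non-essential it lies in every member of $\mathcal{F}$, so $\mathcal{A}_{c_0}=Y_{\mathcal{F}}$; otherwise $c_0$ labels a unique edge $e_{c_0}$, and because $c_0\in A$ and $w$ is a source, $e_{c_0}$ lies strictly inside exactly one of the $\ge 2$ outgoing branches at $w$. In both cases $w\in\mathcal{A}_{c_0}$, and some outgoing edge labelled $a_j$ at $w$ (choose a branch not containing $e_{c_0}$) has $\mathcal{A}_{a_j}\subseteq\mathcal{A}_{c_0}$. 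Putting $\mathcal{A}:=\mathcal{A}_{a_j}$ and $\mathcal{M}:=\mathcal{A}_{c_0}$, the first-step argument forces $\mathcal{A}_{a_j}\cup\{w\}\in\mathcal{F}^*$. But $w$ is a source, so all its edges point outward: if $\deg(w)\ge 3$ then $\mathcal{A}_{a_j}\cup\{w\}$ is not a single-edge cut-component at all, and if $\deg(w)=2$ then it equals the \emph{tail}-component of the other (outgoing) edge, i.e.\ the complement of a halfspace, which is not a head-component and so not a member of $\mathcal{F}^*$ (distinct labels mean each cut is realised by exactly one oriented edge). Either way we contradict the blocking observation, so $A=\emptyset$, that is $\emptyset\in\mathcal{F}$ and $w=v_{\emptyset}$. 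The sink statement follows by applying this to $\overline{\mathcal{F}}$: by Remark \ref{reverseofoneincgraph} the graph $G_{\overline{\mathcal{F}}}$ is the reverse tree (still with distinct labels), by Remark \ref{selfanddualpartcubeiffcomplementselfanddualpartcube} it is again dual well-graded, a sink of $G_{\mathcal{F}}$ is a source of $G_{\overline{\mathcal{F}}}$, and the member of $\overline{\mathcal{F}}$ at $w$ is $A^{c}$, so $A^{c}=\emptyset$ gives $X\in\mathcal{F}$ and $w=v_X$.

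I expect the main obstacle to be the bookkeeping in part \ref{indualpartcubewithGFtreSourceSinkcorrespondemptyamdX}: one must locate $e_{c_0}$ relative to the outgoing branches at $w$ to secure a branch with $\mathcal{A}_{a_j}\subseteq\mathcal{A}_{c_0}$, and then justify $\mathcal{A}_{a_j}\cup\{w\}\notin\mathcal{F}^*$ separately in the regimes $\deg(w)\ge 3$ (not a cut-component) and $\deg(w)=2$ (a cut-component, but with the wrong orientation). The delicate point throughout is that the complement of a head-component is never itself a member of $\mathcal{F}^*$, which rests precisely on the labels being distinct so that each cut-set $C_a$ is a single oriented edge.
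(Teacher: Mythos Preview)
Your argument has a gap in part~\ref{indualpartcubewithGFtreeeveryvertexdeg3orbiggerissourceorsink}. You write ``$w$ is neither a source nor a sink, so $w$ has an outgoing edge labelled $a$ and an incoming edge labelled $c$'', but in this paper a source is a vertex $v$ such that \emph{every edge of $G$} is getting far from $v$ --- a global condition, not the mere absence of incoming edges at $v$. Thus ``$w$ is not a source'' only guarantees that some edge \emph{somewhere} gets closer to $w$; it need not produce an incoming edge at $w$. For instance, with $\mathcal{F}=\{\{4\},\{1,4\},\{2,4\},\{3,4\},\{1\}\}$ the graph $G_{\mathcal{F}}$ is a tree with distinct labels and $w=v_{\{4\}}$ has degree $3$ with all incident edges outgoing, yet $w$ is not a source because the edge $v_{\{1\}}\to v_{\{1,4\}}$ gets closer to $w$. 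Your proof never addresses this configuration, and the parenthetical ``no complementation is needed here'' shows the omission is deliberate rather than a slip of writing.

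The gap is readily repaired with your own tools. If $w$ has degree $\ge 3$ with all incident edges outgoing but is not a source, choose an edge $f$ (necessarily not incident to $w$) that gets closer to $w$; it lies in exactly one branch at $w$, and its head-component $\mathcal{A}_{lab(f)}$ contains $w$ together with every other branch. Taking $\mathcal{A}:=\mathcal{A}_{a_j}$ for an outgoing edge $a_j$ in a \emph{different} branch and $\mathcal{M}:=\mathcal{A}_{lab(f)}$ gives $\mathcal{A}\subsetneq\mathcal{M}$ with $w\in\mathcal{M}$, and your first-step argument plus blocking observation yield the contradiction exactly as in your mixed case; the all-incoming/not-a-sink case is symmetric. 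With this addition your approach is a clean alternative to the paper's: the paper fixes one incoming edge $e$ at $w_0$ and argues through two claims that every edge of $G_{\mathcal{F}}$ gets closer to $w_0$, while you manufacture a single forbidden member $\mathcal{A}\cup\{w\}$ of $\mathcal{F}^*$. One minor remark on part~\ref{indualpartcubewithGFtreSourceSinkcorrespondemptyamdX}: if $w=v_A$ is a source then $A\subseteq B$ for every $B\in\mathcal{F}$ (walking away from a source only enlarges sets), so any $c_0\in A$ is automatically non-essential; your ``essential $c_0$'' subcase is therefore vacuous --- harmless, but unnecessary.
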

\begin{proof}
\ref{indualpartcubewithGFtreeeveryvertexdeg3orbiggerissourceorsink}) Let $w_0$ be a vertex of degree at least 3 in $G_{\mathcal{F}}$. 
Without loss of generality, we can assume that there is an incoming edge to $w_0$ since otherwise 
we can work with $G_{\overline{\mathcal{F}}}$ and prove the statement for it instead of $G_{\mathcal{F}}$ while in this case by Remark \ref{reverseofoneincgraph} we are sure that $w_0$ has incoming edges in $G_{\overline{\mathcal{F}}}$ (in fact in this case every edge connected to $w_0$ would be incoming) and by Remark \ref{selfanddualpartcubeiffcomplementselfanddualpartcube},
$(X,\overline{\mathcal{F}})$ is a dual well-graded family. 
Also it is easy to see that if the statement holds for $G_{\overline{\mathcal{F}}}$, then it holds for $G_{\mathcal{F}}$ too.
So it would be enough to consider only the case that $w_0$ has incoming edge(s) in $G_{\mathcal{F}}$.

Assume that $e$ is an incoming edge to $w_0$. As usual, we let $lab(e)$ be the label of $e$ in $G_{\mathcal{F}}$ which is in fact a member of $X$.
Consider $\mathcal{A}_{lab(e)}$ (as defined in the definition of dual of systems) and identify it with its corresponding subset of vertices of $G_{\mathcal{F}}$ (as explained in Remark \ref{identifyingY_Fwithsetofverticesof1-inc}) and denote the subgraph induced on it by $G_e$. Also denote the subgraph induced on its complement in the set of vertices of $G_{\mathcal{F}}$ by $H_e$. 
Since the labels of edges of $G_{\mathcal{F}}$ are distinct, $e$ is the only edge with label $lab(e)$. Thus, it is easily seen that $G_e$ is the same as that connected component of $G_{\mathcal{F}} \setminus \{e\}$ to which the direction of $e$ is towards.
So, $G_e$ is a tree. Also $H_e$ is a tree.

\vspace{1mm}

\textit{Claim 1.} Directions of edges of $G_e$ are getting closer to $w_0$ in $G_{\mathcal{F}}$ (see Definition \ref{defgettingcloserfar}).

\textit{Proof of Claim 1.} Assume not. Then, since $G_{\mathcal{F}}$ is a tree and by Remark \ref{indirectedtreeeveryedgegetingcloseorfar}, there is an edge $e'$ in $G_e$ that is getting far from $w_0$. 
Now one can see that $\mathcal{A}_{lab(e')} \subsetneqq \mathcal{A}_{lab(e)}$ and since the degree of $w_0$ is at least 3, we have $|\mathcal{A}_{lab(e)} \setminus \mathcal{A}_{lab(e')}|\geqslant 2$.
So since the dual system is assumed to be a well-graded family, there exists some $\mathcal{A}_{lab(e'')}$ in the dual system for 
some edge $e''$ different from $e$ and $e'$ such that 
$\mathcal{A}_{lab(e')} \subset \mathcal{A}_{lab(e'')} \subset \mathcal{A}_{lab(e)}$ and $|\mathcal{A}_{lab(e)} \setminus \mathcal{A}_{lab(e'')}|=1$. 
Denote the subgraph of $G_{\mathcal{F}}$ induced on the vertex set $\mathcal{A}_{lab(e'')}$ by $G_{e''}$.
Since $\mathcal{A}_{lab(e'')} \subset \mathcal{A}_{lab(e)}$, $G_{e''}$ is a subgraph of $G_e$.
Also it is easily seen that $G_{e''}$ is one of the two connected components of $G_{\mathcal{F}} \setminus \{e''\}$, namely the one to which the direction of $e''$ is towards.
Since $G_{e''}$ is a subgraph of $G_e$, it is not hard to see that $e''$ must be an edge of $G_e$.
Since the degree of $w_0$ is at least 3, there are at least two vertices $w_1$ and $w_2$ in $G_e$ which are adjacent to $w_0$.
But it is not hard to see that $G_{e''}$
does not contain $w_0$ and at least one of $w_1$ or $w_2$ among its vertices. Hence, $|\mathcal{A}_{lab(e)} \setminus \mathcal{A}_{lab(e'')}| \geqslant 2$.
It follows that such a $\mathcal{A}_{lab(e'')}$ with mentioned properties does not exist
and this is a contradiction.
So all edges of $G_e$ are getting closer to $w_0$ in $G_{\mathcal{F}}$.
\hfill \textit{Claim 1} $\square$

\vspace{1mm}

\textit{Claim 2.} Directions of edges of $H_e$ are getting closer to $w_0$ in $G_{\mathcal{F}}$.

\textit{Proof of Claim 2.}
Let $q$ be one of the edges connected to $w_0$ and different from $e$.
One can see that $q$ is an edge in $G_e$ and by Claim 1, it is incoming to $w_0$.
Let $G_{q}$ be the subgraph induced from $G_{\mathcal{F}}$ on the vertex set $\mathcal{A}_{lab(q)}$.
Since $q$ is incoming to $w_0$, one can repeat the argument of Claim 1 for the graph $G_q$ instead of $G_e$ and the edge $q$ instead of $e$ and show that directions of edges of $G_{q}$ are getting closer to $w_0$ in $G_{\mathcal{F}}$. 
On the other hand, one can see that $H_e$ is a subgraph of $G_q$. 
Thus, every edge of $H_e$ is an edge of $G_q$.
So the directions of edges of $H_e$ are getting closer to $w_0$ in $G_{\mathcal{F}}$.
\hfill \textit{Claim 2} $\square$

\vspace{1.5mm}

Let $E$ be the union of sets of edges of $G_e$ and $H_e$. It is easy to see that $E$ contains every edge of $G_{\mathcal{F}}$ except $e$. But $e$ is incoming to $w_0$ and by Claim 1 and Claim 2, the directions of edges in $E$ are getting closer to $w_0$ in $G_{\mathcal{F}}$. 
So the direction of every edge of $G_{\mathcal{F}}$ is getting closer to $w_0$ in $G_{\mathcal{F}}$. Therefore, $w_0$ is a sink and the proof is complete.
Note that if at the beginning of the proof we had assumed that there is no incoming edge to $w_0$, then as mentioned earlier, we would have worked with $G_{\overline{\mathcal{F}}}$ and would have proved that $w_0$ is a sink in $G_{\overline{\mathcal{F}}}$. 
Therefore, in this case $w_0$ would be a source in $G_{\mathcal{F}}$.

\vspace{1mm}

\ref{indualpartcubewithGFtreSourceSinkcorrespondemptyamdX})
Let $w \in V(G_{\mathcal{F}})$ be a source of degree at least 2. 
We claim that $\emptyset \in \mathcal{F}$ and $st(w)=\emptyset$.
Assume for contradiction that there is some $x \in st(w)$. Then, since $w$ is a source, one can see that $x$ belongs to every member of $\mathcal{F}$. 
It follows that $\mathcal{A}_x=Y_{\mathcal{F}}$ (where these notations were defined in definition of dual of set systems). Thus,  $Y_{\mathcal{F}} \in \mathcal{F}^*$. 
Now since $(Y_{\mathcal{F}},\mathcal{F}^*)$ is a well-graded family and $\mathcal{F}^*$ in addition of $Y_{\mathcal{F}}$ contains also members with size strictly less than the size of $Y_{\mathcal{F}}$, one sees that there exists some member of $\mathcal{F}^*$, say $\mathcal{A}_z$ for some $z \in ess_{\mathcal{F}}(X)$, which being viewed as a subset of $Y_{\mathcal{F}}$, has size exactly one less than the size of $Y_{\mathcal{F}}$, namely $|\mathcal{F}|-1$.
By Remark \ref{|ess(X)|numberofdistinctlabels}, $z$ appears as the label of at least one edge of the graph $G_{\mathcal{F}}$. But since by assumption the labels of edges of $G_{\mathcal{F}}$ are distinct, $z$ appears as the label of exactly one edge, say $e$, of $G_{\mathcal{F}}$. 
Identify $\mathcal{A}_z$ with its corresponding subset of vertices of $G_{\mathcal{F}}$ (as explained in Remark \ref{identifyingY_Fwithsetofverticesof1-inc}). 
One sees that $\mathcal{A}_z$ corresponds to the vertex set of that connected component of $G_{\mathcal{F}} \setminus \{e\}$ to which the direction of $e$ is towards.
On the other hand, since $w$ has degree at least 2 and $G_{\mathcal{F}}$ is a tree, $G_{\mathcal{F}} \setminus \{w\}$ has at least two connected components and the number of vertices in each of them is less than or equal to $|\mathcal{F}|-2$.
Moreover, since $w$ is a source, the edge $e$ is getting far from $w$ and it is not hard to see that $\mathcal{A}_z$ is a subset of vertices of one of the connected components of $G_{\mathcal{F}} \setminus \{w\}$.
Thus we have $|\mathcal{A}_z| \leqslant |\mathcal{F}|-2$. 
But this is a contradiction.
So $st(w)=\emptyset$.
Therefore, $\emptyset \in \mathcal{F}$ and 
$w=v_{\emptyset}$. 
In the case that $w$ is a sink, one can similarly show that $X \in \mathcal{F}$ and $w=v_X$.
\hfill $\square$
\end{proof}

\begin{dfn}
By a \textit{full-chain} of length $n$ we mean a sequence of sets $A_1 \subseteq A_2 \ldots \subseteq A_n$ such that for each $2\leqslant i \leqslant n$ we have $|A_i \setminus A_{i-1}|=1$. We call a set system $(X,\mathcal{F})$ a full-chain system if 
$\mathcal{F}$ can be ordered in a way that becomes a full-chain.
\end{dfn}
Note that in the definition of full-chain systems, we allow the domain of the set system to possibly contain elements which belong to every or to no member of $\mathcal{F}$.

\begin{dfn}\label{defstarliketree}
We call an edge-labelled directed tree a \textit{starlike} if all labels of it are distinct and it has a distinguished vertex $v_0$ of degree at least $2$ which is either a source or a sink and moreover, other vertices have degrees at most $2$. 
In this case, we call $v_0$ the centre of the starlike. 
We call each of the maximal directed paths starting from $v_0$ (when $v_0$ is a source), or ending in $v_0$ (when $v_0$ is a sink) a wing of the starlike.
\end{dfn}

\begin{dfn}\label{defupwarddownwardstarlike}
We call a set system $(X,\mathcal{F})$ upward-starlike if $G_{\mathcal{F}}$ is a starlike tree, $\emptyset \in \mathcal{F}$ and $v_{\emptyset}$ is the centre and source of $G_{\mathcal{F}}$ 
and in addition, 
there is some element of 
$X$ which does not belong to any member of $\mathcal{F}$ (or equivalently $\emptyset$ belongs to the dual system $\mathcal{F}^*$).
Similarly, we call $(X,\mathcal{F})$ downward-starlike if $G_{\mathcal{F}}$ is a starlike tree, $X \in \mathcal{F}$ 
and $v_X$ is the centre and sink of $G_{\mathcal{F}}$
and moreover, there is an element of $X$ 
which belongs to every member of $\mathcal{F}$ (or equivalently $Y_{\mathcal{F}}$ belongs to the dual system $\mathcal{F}^*$).
As an analogue to the notion of wing in starlike-trees, we call each of the maximal full-chains starting from element $\emptyset$ in an upward-starlike system, or ending in $X$ in a downward-starlike system a wing of that system.
\end{dfn}
It is easy to see that a set system $(X,\mathcal{F})$ is upward-starlike if and only if the set system $(X,\overline{\mathcal{F}})$ (see Definition \ref{defFbarreverse}) is downward-starlike.

\begin{rem}\label{factsaboutfullchainupwarddownwardstarlike}
\begin{enumerate}
\item{Every full-chain system, upward-starlike system or downward-starlike system satisfies all equivalent clauses of Theorem \ref{characterizeextvc=1withequalities,right=impliesleft=}.}\label{everyupordownstarlikesatisfiesTheorem} 

\item{The dual system of every full-chain system, upward-starlike system and downward-starlike system is a full-chain system, upward-starlike system and downward-starlike system respectively.}\label{fullchainupwarddownwardstarlikearealmostselfdual}

\item{Every full-chain system, upward-starlike system and downward-starlike system is self-and dual extremal (and so self-and dual well-graded).}\label{everyupordownstarlikeisselfanddualextremal}

\item{No upward-starlike or downward-starlike system is a maximum system.}\label{noupwarddownwardstarlikeismaximum}

\item{Let $(X,\mathcal{F})$ be a full-chain system of length at least $2$. Then $(X,\mathcal{F})$ is maximum if and only if $X=ess_{\mathcal{F}}(X)$. 
Moreover, either of these equivalent statements for a full-chain system of length at least $2$, implies that the first member of the chain is $\emptyset$.}\label{fullchainlengthatleast2maximumiff}
\end{enumerate}
\end{rem}
\begin{proof}
\ref{everyupordownstarlikesatisfiesTheorem}) By definition, the one-inclusion graph of every upward-starlike or downward-starlike system is a tree with distinct labels. 
Also clearly the one-inclusion graph of every full-chain system is a one-way path with distinct labels. 
So by Theorem \ref{characterizeextvc=1withequalities,right=impliesleft=} (\ref{GFistreealllabelsaredifferent}), the result is clear. 

\ref{fullchainupwarddownwardstarlikearealmostselfdual}) It is not very hard to verify the statement by direct computing the dual of such systems.

\ref{everyupordownstarlikeisselfanddualextremal}) By combination of parts \ref{everyupordownstarlikesatisfiesTheorem} and \ref{fullchainupwarddownwardstarlikearealmostselfdual} of this remark, the result is clear.

\ref{noupwarddownwardstarlikeismaximum})
One can see that every upward-starlike system $(X,\mathcal{F})$ has positive VC-dimension. So if it is maximum, then every element of $X$ as a one-element subset is shattered by $\mathcal{F}$. But this is impossible since there is some element in $X$ which belongs to no member of $\mathcal{F}$. So every upward-starlike system is not maximum.
A similar argument works for downward-starlike systems.

\ref{fullchainlengthatleast2maximumiff})
First assume that the full-chain system $(X,\mathcal{F})$ is maximum and $|\mathcal{F}| \geqslant 2$. So $VCdim(X,\mathcal{F})\geqslant 1$. Therefore, since the system is maximum, every element of $X$ as a one-element subset is shattered by $\mathcal{F}$. Thus, $X=ess_{\mathcal{F}}(X)$.
Let $A$ be the first member of the full-chain $\mathcal{F}$. 
So each element of $A$ belongs to all members of $\mathcal{F}$.
Hence, $A \subseteq X \setminus ess_{\mathcal{F}}(X)$. 
It follows that $A=\emptyset$ and $\mathcal{F}$ starts from $\emptyset$.
Furthermore, it is easy to see that every full-chain system $(X,\mathcal{F})$ of length at least $2$ with $X=ess_{\mathcal{F}}(X)$ starts from $\emptyset$ and is a $1$-maximum system.
\hfill $\square$
\end{proof}

\vspace{1mm}

Note that the converse of Remark \ref{factsaboutfullchainupwarddownwardstarlike}(\ref{everyupordownstarlikesatisfiesTheorem}) does not hold. A simple example showing this is the set system $(X,\mathcal{F})$ where 
$X:=\{1,2,3\}$ and $\mathcal{F}:=\{\{1\},\{1,2\},\{1,3\}\}$. 
In this system $G_{\mathcal{F}}$ is a tree with distinct labels on its edges.
So this system satisfies Clause \ref{GFistreealllabelsaredifferent} of the Theorem \ref{characterizeextvc=1withequalities,right=impliesleft=}. But it is clearly neither a full-chain system, nor an upward-starlike system nor a downward-starlike system.
Regarding Remark \ref{factsaboutfullchainupwarddownwardstarlike}(\ref{fullchainupwarddownwardstarlikearealmostselfdual}), one can see that 
every upward-starlike system or downward-starlike system is an almost self-dual system. 
Also if a full-chain system starts with a nonempty set and does not have any element in its domain which appears in no member of the chain, then its dual system is a full-chain with the same length.

\begin{lem}\label{selfanddualpathdensefortreeandsemitree}
Let $(X,\mathcal{F})$ be a set system. 
Then the following hold. 
\begin{enumerate}
\item{The system $(X,\mathcal{F})$ is dual well-graded and $G_{\mathcal{F}}$ is a directed tree with distinct labels if and only if $(X,\mathcal{F})$ is either a full-chain system (or equivalently $G_{\mathcal{F}}$ is a one-way path) 
or a downward-starlike or an upward-starlike system.
}\label{1-inctreesystemsdualpathdenseiffarepathorstarlike}
\item{The system $(X,\mathcal{F})$ is ess-dual well-graded and $G_{\mathcal{F}}$ is a directed tree with distinct labels
if and only if $(X,\mathcal{F})$ is a full-chain system. 
}
\label{1-inctreesystemsessdualpathdenseiffarepath}
\item{Assume that $G_{\mathcal{F}}$ is a semitree. 
Then the dual system and ess-dual system of $(X,\mathcal{F})$ are not well-graded.}
\label{dualofsemitreeisnotpathdense}
\end{enumerate}
\end{lem}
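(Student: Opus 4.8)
The plan is as follows. In parts 1 and 2 the hypothesis that $G_{\mathcal{F}}$ is a directed tree with distinct labels lets me invoke Theorem \ref{characterizeextvc=1withequalities,right=impliesleft=} (clause \ref{GFistreealllabelsaredifferent} $\Rightarrow$ \ref{partcubevcatmost1}): $(X,\mathcal{F})$ is itself a well-graded family, so $d_{G_{\mathcal{F}}}(v_A,v_B)=|A\triangle B|$, and by Remark \ref{inpartcubedgeswithlabelaiscutsets} the members of the dual are exactly the cut-sets $\mathcal{A}_\ell$ attached to the (single, since labels are distinct) edge $e_\ell$, each a connected subtree equal to the side toward which $e_\ell$ points, plus possibly $\emptyset$ and $Y_{\mathcal{F}}$ from non-essential elements. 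I would everywhere test well-gradedness of the dual through the betweenness criterion of Remark \ref{rempathdenseforinfiniteandfinitesystems}. One preliminary observation I would record: since $\mathcal{F}$ is well-graded, a vertex $v_0$ is a source exactly when $st(v_0)$ is the minimum of $\mathcal{F}$ under inclusion, and a sink exactly when $st(v_0)$ is the maximum.

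For part 1 the direction ($\Leftarrow$) is immediate: full-chain, upward-starlike and downward-starlike systems are self-and dual extremal, hence dual well-graded, by Remark \ref{factsaboutfullchainupwarddownwardstarlike}(\ref{everyupordownstarlikeisselfanddualextremal}), and have trees with distinct labels by Remark \ref{factsaboutfullchainupwarddownwardstarlike}(\ref{everyupordownstarlikesatisfiesTheorem}). For ($\Rightarrow$) I would first show $G_{\mathcal{F}}$ has at most one vertex of degree $\geq 3$: any such vertex is a source or a sink by Lemma \ref{indualpartcubewithGFtreesomeproperties}(\ref{indualpartcubewithGFtreeeveryvertexdeg3orbiggerissourceorsink}), there is at most one of each by Remark \ref{sourcesinkindirectedtrees}, and two of them would be a source and a sink, forcing $G_{\mathcal{F}}$ to be a one-way path (Remark \ref{sourcesinkindirectedtrees}), which has no vertex of degree $\geq 3$ — a contradiction. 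Thus $G_{\mathcal{F}}$ is either starlike (unique center of degree $\geq 3$, a source or sink, all other vertices of degree $\leq 2$) or a path. Whenever there is a source (resp. sink) of degree $\geq 2$, it is $v_{\emptyset}$ (resp. $v_X$) by Lemma \ref{indualpartcubewithGFtreesomeproperties}(\ref{indualpartcubewithGFtreSourceSinkcorrespondemptyamdX}); then, applying the betweenness criterion to the singletons $\mathcal{A}_{a^*},\mathcal{A}_{b^*}$ arising from two leaf-maxima (wing-tips), whose only admissible connector is $\emptyset$ — because $\{v_{a^*},v_{b^*}\}$ is disconnected and hence is no $\mathcal{A}_z$ (cuts are connected by Remark \ref{inpartcubedgeswithlabelaiscutsets}) — forces $\emptyset\in\mathcal{F}^*$ (resp. $Y_{\mathcal{F}}\in\mathcal{F}^*$), i.e. the extra-element condition of an upward-starlike (resp. downward-starlike) system.

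The remaining and hardest step is the path case, where Lemma \ref{indualpartcubewithGFtreesomeproperties}(\ref{indualpartcubewithGFtreeeveryvertexdeg3orbiggerissourceorsink}) gives no leverage, and I expect this to be the main obstacle of the whole lemma. Labelling the path $v_1-\cdots-v_n$, each dual member $\mathcal{A}_\ell$ is a prefix $\{v_1,\dots,v_i\}$ or a suffix $\{v_{i+1},\dots,v_n\}$; two of them are adjacent in $G_{\mathcal{F}^*}$ only when they come from consecutive edges pointing the same way, while at a direction change the two incident cuts have symmetric difference $n-1$. Consequently each maximal monotone run of the path contributes a single connected chain to $G_{\mathcal{F}^*}$, the only cuts that can attach to $\emptyset$ or $Y_{\mathcal{F}}$ are the singleton/co-singleton cuts at the two path-ends, and a path with two or more direction changes therefore has an interior run whose chain is an isolated component, contradicting connectedness of the well-graded dual. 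Hence the path has at most one turn: it is a one-way path (a full-chain), or it has a single interior source or sink of degree $2$, and the source/sink analysis of the previous paragraph then places the latter among the starlike systems.

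For part 2 I would reduce to part 1. Passing to the trace $\mathcal{G}:=\mathcal{F}\cap ess_{\mathcal{F}}(X)$ changes neither the members nor the one-inclusion graph, makes every element essential, and identifies $\mathcal{G}^{*}$ with the ess-dual $\mathcal{F}^{*}_{ess}$; so $(X,\mathcal{F})$ is ess-dual well-graded with $G_{\mathcal{F}}$ a tree with distinct labels iff $\mathcal{G}$ is dual well-graded with the same one-inclusion graph. By part 1, $\mathcal{G}$ is then a full-chain, upward-starlike or downward-starlike system; the last two require a trivial element, of which $\mathcal{G}$ has none, so $\mathcal{G}$, and therefore $\mathcal{F}$, is a full-chain, while conversely the ess-dual of a full-chain is a subchain and hence well-graded. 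Finally, for part 3, by Theorem \ref{structureofpathdensewithF=ess+2} a semitree $G_{\mathcal{F}}$ has a parallel-directed-labelled $C_4$ with two labels $\beta\neq\gamma$ and four trees $T_a,T_b,T_c,T_d$ hanging off its four vertices; deleting the two $\beta$-edges (resp. the two $\gamma$-edges) splits the graph so that, up to relabelling, $\mathcal{A}_\beta=T_a\cup T_b$ and $\mathcal{A}_\gamma=T_a\cup T_c$, whence $\mathcal{A}_\beta\cap\mathcal{A}_\gamma=T_a$, $\mathcal{A}_\beta\cup\mathcal{A}_\gamma=T_a\cup T_b\cup T_c$ and $|\mathcal{A}_\beta\triangle\mathcal{A}_\gamma|=|T_b|+|T_c|\geq 2$. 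No dual member lies strictly between them: a cut $\mathcal{A}_z$ for a tree-label $z$ sits inside a single $T_x$, so it either omits part of $T_a$ or drags in $T_d$, while $\emptyset$ omits $T_a$ and $Y_{\mathcal{F}}$ contains $T_d$; by Remark \ref{rempathdenseforinfiniteandfinitesystems} the dual fails well-gradedness, and since $\beta,\gamma$ are essential the same pair defeats the ess-dual as well.
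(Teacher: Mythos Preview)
Your proof is correct and follows the paper's overall architecture: use Lemma~\ref{indualpartcubewithGFtreesomeproperties} to handle vertices of degree $\geq 3$, then treat the path case separately, reduce part~\ref{1-inctreesystemsessdualpathdenseiffarepath} to part~\ref{1-inctreesystemsdualpathdenseiffarepathorstarlike} via the trace on $ess_{\mathcal{F}}(X)$, and for part~\ref{dualofsemitreeisnotpathdense} exhibit dual members that violate Remark~\ref{rempathdenseforinfiniteandfinitesystems}.

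The one place where your route genuinely differs from the paper is the path sub-case of part~\ref{1-inctreesystemsdualpathdenseiffarepathorstarlike}. The paper argues locally: assuming some interior vertex has incoming degree $2$ but is not a sink, it finds a specific edge $e_3$ getting far from that vertex and shows that no $\mathcal{A}_{lab(e_4)}$ can sit one step below $\mathcal{A}_{lab(e_1)}$, contradicting dual well-gradedness. You instead argue globally: you describe the whole one-inclusion graph of $\mathcal{F}^*$ as a disjoint union of chains (one per monotone run), observe that only the two end-edge cuts can attach to $\emptyset$ or $Y_{\mathcal{F}}$, and conclude that two or more direction changes leave an interior chain as an isolated component, contradicting connectedness. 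Your approach gives a cleaner structural picture of $G_{\mathcal{F}^*}$ and handles all turns at once; the paper's approach is more hands-on but avoids the bookkeeping of verifying that cuts from non-adjacent runs are never Hamming-adjacent.

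Two small points worth tightening. First, your sentence ``a cut $\mathcal{A}_z$ for a tree-label $z$ sits inside a single $T_x$'' is not literally true: removing a tree-edge gives one small side contained in some $T_x$ and one large side containing the other three trees and the $C_4$. Your conclusion ``so it either omits part of $T_a$ or drags in $T_d$'' is nonetheless correct, since the small side misses the $C_4$ vertex $a$ while the large side contains all of $T_d$; just rephrase so the dichotomy is explicit. Second, in the sink case of the starlike analysis your disconnectedness argument does not apply verbatim (the complement of two leaves in a tree is connected); you should either invoke the complementation symmetry of Remarks~\ref{complementofpartcubeispartcube} and~\ref{partcube,complement,dual} to reduce to the source case, as the paper does, or note directly that the only candidates for the missing intermediate are prefixes/suffixes and $\{\emptyset,Y_{\mathcal{F}}\}$, none of which equals $Y_{\mathcal{F}}\setminus\{v_{a^*},v_{b^*}\}$.
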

\begin{proof}
\ref{1-inctreesystemsdualpathdenseiffarepathorstarlike})
If a set system $(X,\mathcal{F})$ is either a full-chain system 
or a downward-starlike or an upward-starlike, then by Remark
\ref{factsaboutfullchainupwarddownwardstarlike}(\ref{everyupordownstarlikeisselfanddualextremal}), the system is self-and-dual well-graded family.

For the converse, assume that $(X,\mathcal{F})$ is a dual well-graded family and $G_{\mathcal{F}}$ a directed tree with distinct labels.
We divide the situations into two cases.

\vspace{1mm}

\textit{Case I.} In this case we assume that the maximum degree of vertices of $G_{\mathcal{F}}$ is less than or equal to $2$. 
Then, since $G_{\mathcal{F}}$ is a tree, its underlying undirected graph must be a path.
If there is no vertex of degree 2 or if in every vertex with degree 2 the incoming and outgoing degrees are 1, then $G_{\mathcal{F}}$ is a one-way path which follows that the system is a full-chain system. 
So assume that there is some vertex with incoming or outgoing degree 2.
First assume that there is some vertex $w_0$ with incoming degree 2. We claim that $w_0$ is a sink. 
Assume for contradiction that $w_0$ is not a sink. 
Let $e_1$ and $e_2$ be the incoming edges to $w_0$.
Now consider $\mathcal{A}_{lab(e_1)}$ and $\mathcal{A}_{lab(e_2)}$ as two subsets of vertices of $G_{\mathcal{F}}$ (as explained in Remark \ref{identifyingY_Fwithsetofverticesof1-inc}) and denote by $G_{e_1}$ and $G_{e_2}$ the subgraphs induced on them by $G_{\mathcal{F}}$. 
Since by assumption labels of edges of $G_{\mathcal{F}}$ are distinct, $e_1$ and $e_2$ are the only edges with labels $lab(e_1)$ and $lab(e_2)$ respectively.
It is easy to see that $G_{e_1}$ and $G_{e_2}$ are two paths, union of them is $G_{\mathcal{F}}$ and they have only vertex $w_0$ in common. 
Since $w_0$ is not sink and by using Remark \ref{indirectedtreeeveryedgegetingcloseorfar}, there is an edge $e_3$ 
whose direction is getting far from $w_0$. 
Without loss of generality, we may assume that $e_3$ is in $G_{e_1}$.
Now it is not difficult to see that 
$\mathcal{A}_{lab(e_3)} \subset \mathcal{A}_{lab(e_1)}$ and 
$|\mathcal{A}_{lab(e_1)} \setminus \mathcal{A}_{lab(e_3)}|\geqslant 2$. 
Since $(Y_{\mathcal{F}},\mathcal{F}^*)$ is assumed to be a well-graded family and 
$\mathcal{A}_{lab(e_1)}, \mathcal{A}_{lab(e_3)} \in \mathcal{F}^*$,
there exists $B \in \mathcal{F}^*$ such that $\mathcal{A}_{lab(e_3)} \subsetneqq B \subsetneqq \mathcal{A}_{lab(e_1)}$ and 
$|\mathcal{A}_{lab(e_1)} \setminus B|=1$.
So $B=\mathcal{A}_x$ for some $x \in X$ and since $B \not = \emptyset$ and $B \not = Y_{\mathcal{F}}$, we have $x \in ess_{\mathcal{F}}(X)$.
Hence, by using Remark \ref{|ess(X)|numberofdistinctlabels}, we have $B=\mathcal{A}_{lab(e_4)}$ for some edge $e_4$ of $G_{\mathcal{F}}$.
Therefore, $\mathcal{A}_{lab(e_3)} \subsetneqq \mathcal{A}_{lab(e_4)} \subsetneqq \mathcal{A}_{lab(e_1)}$ and 
$|\mathcal{A}_{lab(e_1)} \setminus \mathcal{A}_{lab(e_4)}|=1$.
Note that $e_4$ is the only edge with label $lab(e_4)$. One can see that $\mathcal{A}_{lab(e_4)}$ corresponds to the vertex set of that connected component of $G_{\mathcal{F}} \setminus \{e_4\}$ to which the direction of $e_4$ is towards.
But now it is not hard to see that $e_4 \in G_{e_1}$ and $e_4$ is getting far from $w_0$. Also it is easily seen that $|\mathcal{A}_{lab(e_1)} \setminus \mathcal{A}_{lab(e_4)}|\geqslant 2$ which is a contradiction.
Therefore, $w_0$ is a sink.
So $G_{\mathcal{F}}$ is a starlike with centre $w_0$.
Also by Lemma \ref{indualpartcubewithGFtreesomeproperties}(\ref{indualpartcubewithGFtreSourceSinkcorrespondemptyamdX}), $X \in \mathcal{F}$ and $w_0=v_X$. 
Now we only need to show that there is an element of $X$ which belongs to every member of $\mathcal{F}$.
Let $a_1$ and $a_2$ be two ending vertices of the underlying undirected graph of $G_{\mathcal{F}}$ (which is a path) and $h_1$ and $h_2$ be the unique adjacent edges to them respectively. Since $G_{\mathcal{F}}$ is starlike with centre $w_0$ which is a sink, it is easily seen that $\mathcal{A}_{lab(h_1)}=Y_{\mathcal{F}} \setminus \{a_1\}$ and $\mathcal{A}_{lab(h_2)}=Y_{\mathcal{F}} \setminus \{a_2\}$
where we recall that we are again identifying the elements of $Y_{\mathcal{F}}$ with the set of vertices of the graph $G_{\mathcal{F}}$ as explained in Remark \ref{identifyingY_Fwithsetofverticesof1-inc}.
Thus, 
$|\mathcal{A}_{lab(h_1)} \triangle \mathcal{A}_{lab(h_2)}|=2$. Since $(Y_{\mathcal{F}},\mathcal{F}^*)$ is assumed to be a well-graded family and $\mathcal{A}_{lab(h_1)}, \mathcal{A}_{lab(h_2)} \in \mathcal{F}^*$, there must exist
some $c \in X$ such that $|\mathcal{A}_{lab(h_1)} \triangle \mathcal{A}_c|=1$ and 
$|\mathcal{A}_c \triangle \mathcal{A}_{lab(h_2)}|=1$.
So either $\mathcal{A}_c=Y_{\mathcal{F}}$ or $\mathcal{A}_c=Y_{\mathcal{F}} \setminus \{a_1,a_2\}$. 
It follows that $c \in X \setminus ess_{\mathcal{F}}(X)$ since otherwise, by Remark \ref{|ess(X)|numberofdistinctlabels} and the assumption that labels of edges of $G_{\mathcal{F}}$ are distinct, $c$ would be the label of exactly one edge of $G_{\mathcal{F}}$ and then in this case, one can see that $\mathcal{A}_c$ would contain exactly one of $a_1$ or $a_2$ which would be a contradiction. Consequently, since $\mathcal{A}_c \not = \emptyset$, we have 
$\mathcal{A}_c=Y_{\mathcal{F}}$.
Hence, $c$ is an element of $X$ which belongs to every member of $\mathcal{F}$. Therefore, $(X,\mathcal{F})$ is a downward-starlike system.

With a similar argument as above, one can show that if there is some vertex $w_0$ with outgoing degree 2, then $w_0$ is a source and $G_{\mathcal{F}}$ is a starlike with centre $w_0$, $\emptyset \in \mathcal{F}$ and 
$w_0=v_{\emptyset}$. 
Moreover, there is some element in $X$ which does not belong to any member of $\mathcal{F}$. Therefore, in this case $(X,\mathcal{F})$ is an upward-starlike system.

\vspace{1mm}

\textit{Case II.} 
In this case we assume that there is some vertex $w_0$ of $G_{\mathcal{F}}$ with degree at least $3$. 
So by using Lemma
\ref{indualpartcubewithGFtreesomeproperties}(\ref{indualpartcubewithGFtreeeveryvertexdeg3orbiggerissourceorsink}), $w_0$ is either a source or a sink.
By Remark \ref{sourcesinkindirectedtrees}, except $w_0$ there is no other source or sink in $G_{\mathcal{F}}$.
Hence, again by Lemma
\ref{indualpartcubewithGFtreesomeproperties}(\ref{indualpartcubewithGFtreeeveryvertexdeg3orbiggerissourceorsink}) there is no vertex of degree at least $3$ in $G_{\mathcal{F}}$ except $w_0$.
Now one can see that the graph $G_{\mathcal{F}}$ is starlike.
Also by Lemma \ref{indualpartcubewithGFtreesomeproperties}(\ref{indualpartcubewithGFtreSourceSinkcorrespondemptyamdX}), if $w_0$ is a source, then $\emptyset \in \mathcal{F}$ and $w_0=v_{\emptyset}$. Similarly, if $w_0$ is a sink, then $X \in \mathcal{F}$ and $w_0=v_X$.

\vspace{0.75mm}

\textit{Claim:} If $w_0$ is a source, then 
$\mathcal{F}^*$ contains $\emptyset$.
Also if $w_0$ is a sink, 
$\mathcal{F}^*$ contains $Y_{\mathcal{F}}$.

\textit{Proof of Claim:}
We only prove the former case. The proof of the other case is similar.
Assume that $w_0$ is a source.
Let $a_1$ and $a_2$ be two vertices of degree 1 of $G_{\mathcal{F}}$
and $h_1$ and $h_2$ be the unique adjacent edges to them respectively.
We remind that $G_{\mathcal{F}}$ is starlike and one sees that $a_1$ and $a_2$ belong to the different wings of $G_{\mathcal{F}}$. Since $G_{\mathcal{F}}$ is starlike with centre $w_0$ which is a source, it is easily seen that $\mathcal{A}_{lab(h_1)}=\{a_1\}$ and $\mathcal{A}_{lab(h_2)}=\{a_2\}$
where we recall that we are identifying the elements of $Y_{\mathcal{F}}$ with the set of vertices of the graph $G_{\mathcal{F}}$ as explained in Remark \ref{identifyingY_Fwithsetofverticesof1-inc}. 
Since $(Y_{\mathcal{F}},\mathcal{F}^*)$ is assumed to be a well-graded family and $\mathcal{A}_{lab(h_1)}, \mathcal{A}_{lab(h_2)} \in \mathcal{F}^*$, there must exist
some $c \in X$ such that $|\mathcal{A}_{lab(h_1)} \triangle \mathcal{A}_c|=1$ and $|\mathcal{A}_c \triangle \mathcal{A}_{lab(h_2)}|=1$.
So either $\mathcal{A}_c=\emptyset$ or $\mathcal{A}_c=\{a_1,a_2\}$. 
We claim that $c \in X \setminus ess_{\mathcal{F}}(X)$. Assume not. Then by Remark \ref{|ess(X)|numberofdistinctlabels} and the assumption that labels of edges of $G_{\mathcal{F}}$ are distinct, $c$ is the label of exactly one edge of $G_{\mathcal{F}}$. So $\mathcal{A}_c$ is nonempty which follows that $\mathcal{A}_c=\{a_1,a_2\}$. Also since $a_1$ and $a_2$ belong to the different wings of $G_{\mathcal{F}}$ and $w_0$ is source, 
one sees that $\mathcal{A}_c$ contains at most one of $a_1$ or $a_2$ which is a contradiction. Consequently, $c \in X \setminus ess_{\mathcal{F}}(X)$.
Now, since $\mathcal{A}_c \not = Y_{\mathcal{F}}$, we have 
$\mathcal{A}_c=\emptyset$. Therefore, $\mathcal{F}^*$ contains $\emptyset$.
\hfill \textit{Claim 2} $\square$

\vspace{0.75mm}

Using the above claim, if $w_0$ is a source, then $\mathcal{F}^*$ contains $\emptyset$ which is equivalent to say that there exists some $x \in X$ which belongs to no members of $\mathcal{F}$. Therefore, in this case the system $(X,\mathcal{F})$ is upward-starlike. Similarly, if $w_0$ is a sink, 
then $Y_{\mathcal{F}} \in \mathcal{F}^*$ which is equivalent to say that
there exists some $x \in X$ which belongs to every member of $\mathcal{F}$. In this case the system is downward-starlike. 

\vspace{1.5mm}

\ref{1-inctreesystemsessdualpathdenseiffarepath})
If the set system $(X,\mathcal{F})$ is a full-chain system, then it is easy to see that it is ess-dual well-graded and also $G_{\mathcal{F}}$ is a directed tree (in fact a one-way path) with distinct labels. We prove the converse.
For simplifying the notations, we use notation $Z$ for $ess_{\mathcal{F}}(X)$. Consider the set system $(Z,\mathcal{G})$ where 
$\mathcal{G}=\mathcal{F} \cap Z$, the trace of the system $(X,\mathcal{F})$ on the subset $Z$ of its domain $X$. 
One can see that the graphs $G_{\mathcal{F}}$ and $G_{\mathcal{G}}$ are isomorphic (as labelled directed graphs) via the map $v_A \rightarrow v_{A \cap Z}$ for every $A \in \mathcal{F}$. So $G_{\mathcal{G}}$ is a directed tree with distinct labels.
One can also verify that $Y_{\mathcal{F}}$ and $Y_{\mathcal{G}}$ 
can be identified and moreover, ess-dual of $(X,\mathcal{F})$ and dual of $(Z,\mathcal{G})$ can be seen as the same.
Hence, $(Z,\mathcal{G})$ is a dual well-graded family.
So by part \ref{1-inctreesystemsdualpathdenseiffarepathorstarlike} of this lemma, $(Z,\mathcal{G})$ is either a full-chain system or a downward-starlike system or an upward-starlike system. 
But $(Z,\mathcal{G})$ cannot be upward-starlike or downward-starlike since $Z=ess_{\mathcal{G}}(Z)$ while in the definition of upward (or downward)-starlike systems, there is some element which belongs to no (every) member of the system which implies that this element belongs to $Z \setminus ess_{\mathcal{G}}(Z)$. So $(Z,\mathcal{G})$ is a full-chain system. Now one can see that the initial system $(X,\mathcal{F})$ is also a full-chain system.

\vspace{2mm}

\ref{dualofsemitreeisnotpathdense})
Assume for contradiction that the dual system $(Y_{\mathcal{F}},\mathcal{F}^*)$ is a well-graded family.
Let $a_1, a_2, a_3$ and $a_4$ be the vertices of the parallel-directed-labelled $C_4$-cycle of $G_{\mathcal{F}}$ indexed by the cyclic order of the $C_4$ cycle. Denote this $C_4$-cycle by $C$ .
It is easy to see that without loss of generality, one can assume that 
the directions of edges of the cycle $C$ are $a_1$ to $a_2$, $a_2$ to $a_3$, $a_4$ to $a_3$ and $a_1$ to $a_4$. 
Note that by definition of semitree graphs we have $lab(a_2a_3)=lab(a_1a_4)$ and $lab(a_1a_2)=lab(a_4a_3)$. 
Let $l:=lab(a_2a_3)$ and $s:=lab(a_1a_2)$. So $l$ and $s$ are in fact two distinct elements of $ess_{\mathcal{F}}(X)$. 
Also $\mathcal{A}_{l} \in \mathcal{F}^*$. 
Moreover, $a_3, a_4 \in \mathcal{A}_{l}$ and $a_1, a_2 \not \in \mathcal{A}_{l}$
where we are identifying the elements of $Y_{\mathcal{F}}$ with the set of vertices of the graph $G_{\mathcal{F}}$ as explained in Remark \ref{identifyingY_Fwithsetofverticesof1-inc}.
Similarly, we have $a_2, a_3 \in \mathcal{A}_s$ and $a_1, a_4 \not \in \mathcal{A}_s$.
Now since $\mathcal{F}^*$ contains other elements except $\mathcal{A}_{l}$ and $(Y_{\mathcal{F}},\mathcal{F}^*)$ is assumed to be a well-graded family, one can see that there exists some $\mathcal{A}_{t} \in \mathcal{F}^*$ for some $t \in X$ different from $l$ such that $|\mathcal{A}_{l} \triangle \mathcal{A}_{t}|=1$.
If $t \in X \setminus ess_{\mathcal{F}}(X)$, then either $\mathcal{A}_{t}=Y_{\mathcal{F}}$ or $\mathcal{A}_{t}=\emptyset$. In the former case we get the contradiction $|\mathcal{A}_{l} \triangle \mathcal{A}_{t}| \geqslant 2$ since as mentioned earlier,
$a_1, a_2 \in Y_{\mathcal{F}} \setminus \mathcal{A}_l$ and in the latter case 
we get the same contradiction since
$a_3, a_4 \in \mathcal{A}_l$. 
Thus, $t \in ess_{\mathcal{F}}(X)$.
Note that by Theorem $\ref{structureofpathdensewithF=ess+2} (\ref{1-incgraphissemitree} \Rightarrow \ref{pathdensewithF=ess+2})$, the system $(X,\mathcal{F})$ is a well-graded family.
So by Remark \ref{|ess(X)|numberofdistinctlabels}, there is an edge $e$ of the graph $G_{\mathcal{F}}$ with label $t$. 
If $t = s$, then $\mathcal{A}_t=\mathcal{A}_s$ and so $a_2, a_3 \in \mathcal{A}_t$ and $a_1, a_4 \not \in \mathcal{A}_t$. 
Combining this with the fact that $a_3, a_4 \in \mathcal{A}_l$ and $a_1, a_2 \not \in \mathcal{A}_l$, we have $a_2, a_4 \in \mathcal{A}_{l} \triangle \mathcal{A}_{t}$.
It follows that 
$|\mathcal{A}_{l} \triangle \mathcal{A}_{t}| \geqslant 2$ which is contradiction. Therefore, $t \not =s$ and as mentioned earlier, $t \not =l$. Hence, the edge $e$ is not among four edges of the cycle $C$ of $G_{\mathcal{F}}$. 
Thus, by the definition of a semitree, $e$ is the only edge of $G_{\mathcal{F}}$ which has $t$ as the label.
So by Remark \ref{inpartcubedgeswithlabelaiscutsets}, $\{e\}$ is the same as the cut-set $E(\mathcal{A}_t,\mathcal{A}_t^c)$ of the graph $G_{\mathcal{F}}$ 
and the graph $G_{\mathcal{F}} \setminus \{e\}$ has two connected components which are in fact the induced subgraphs of $G_{\mathcal{F}}$ on $\mathcal{A}_t$ and $\mathcal{A}_t^c$. 
Also it is not hard to see that depending on the direction of the edge $e$,
the cycle $C$ is a subgraph of one of these two connected components. Hence, the set $\{a_1, a_2, a_3, a_4\}$ is either a subset of $\mathcal{A}_t$ or a subset of $\mathcal{A}_t^c$.
But in either of the cases we have $|\mathcal{A}_{l} \triangle \mathcal{A}_{t}| \geqslant 2$ which is a contradiction. 
Therefore, the dual system $(Y_{\mathcal{F}},\mathcal{F}^*)$ is not a well-graded family.
With a similar argument as above, one can also show that the ess-dual system of $(X,\mathcal{F})$ is not a well-graded family.
\hfill $\square$
\end{proof}

\begin{lem}\label{dualselfextpathdenseimpliesalmostequality}
Let $(X,\mathcal{F})$ be a set system. Then the following hold.
\begin{enumerate}
\item{If $(X,\mathcal{F})$ is a self-and-dual 
well-graded family, then we have 
$|ess_{\mathcal{F}}(X)|+1 \leqslant |\mathcal{F}| \leqslant |ess_{\mathcal{F}}(X)|+r_{\mathcal{F}}+1$
where $r_{\mathcal{F}}$ is the number of elements of the set $\{\emptyset,X\}$ which belong to $\mathcal{F}$.
So in particular if $\mathcal{F}$ contains none of $\emptyset$ and $X$, then we have $|\mathcal{F}|=|ess_{\mathcal{F}}(X)|+1$ which means that the additionality of the system is $1$.}\label{selfanddualPartialcubeinequ}
\item{If $(X,\mathcal{F})$ is a self-and-ess-dual 
well-graded family, then we have 
$|\mathcal{F}|=|ess_{\mathcal{F}}(X)|+1$.
Moreover, the quantity $r'_{\mathcal{F}}:=|\{A \in \mathcal{F}: ess_{\mathcal{F}}(X) \subseteq A \ or \ A \cap ess_{\mathcal{F}}(X)=\emptyset\}|$ is equal to $2$.}\label{selfandessdualPartialcubeinequ} 
\end{enumerate}
\end{lem}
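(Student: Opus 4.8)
The plan is to reduce both statements to a single application of Proposition \ref{FessEGFinequalities}, applied not to $\mathcal{F}$ but to its (essential) dual system, after first reading off the relevant parameters of that dual in terms of $\mathcal{F}$. Two bookkeeping facts drive everything. First, for the dual $(Y_{\mathcal{F}},\mathcal{F}^*)$ a domain element $y_A$ is essential precisely when $A\neq\emptyset$ and $A\neq X$: since $y_A\in\mathcal{A}_x$ holds exactly when $x\in A$, the singleton $\{y_A\}$ is shattered by $\mathcal{F}^*$ exactly when $A$ both meets and misses $X$. Hence $|ess_{\mathcal{F}^*}(Y_{\mathcal{F}})|=|\mathcal{F}|-r_{\mathcal{F}}$. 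Second, an element of $X$ is non-essential exactly when it lies in every member or in no member of $\mathcal{F}$, so non-essential elements contribute only the two trivial dual members $\emptyset$ and $Y_{\mathcal{F}}$, whereas by Remark \ref{pathdensex,ysametypeoutsideess} the essential elements give pairwise distinct members, each different from $\emptyset$ and from $Y_{\mathcal{F}}$; consequently $|\mathcal{F}^*|\leqslant|ess_{\mathcal{F}}(X)|+2$.

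For part \ref{selfanddualPartialcubeinequ}, the left inequality is just Proposition \ref{FessEGFinequalities} for $\mathcal{F}$ itself. For the right inequality I would apply the left half of that proposition to $\mathcal{F}^*$, which is well-graded by hypothesis: this gives $|ess_{\mathcal{F}^*}(Y_{\mathcal{F}})|+1\leqslant|\mathcal{F}^*|$. Substituting the two facts above produces the chain $|\mathcal{F}|-r_{\mathcal{F}}+1\leqslant|\mathcal{F}^*|\leqslant|ess_{\mathcal{F}}(X)|+2$, that is, $|\mathcal{F}|\leqslant|ess_{\mathcal{F}}(X)|+r_{\mathcal{F}}+1$. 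The ``in particular'' clause then follows by setting $r_{\mathcal{F}}=0$ and squeezing between the two inequalities.

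For part \ref{selfandessdualPartialcubeinequ}, I would run the identical computation for the essential dual $(Y_{\mathcal{F}},\mathcal{F}^*_{ess})$, where $|\mathcal{F}^*_{ess}|\leqslant|ess_{\mathcal{F}}(X)|$ holds automatically. Here $y_A$ is essential in $\mathcal{F}^*_{ess}$ precisely when $A$ both meets and misses $ess_{\mathcal{F}}(X)$, so the non-essential domain elements are exactly the members $A$ with $ess_{\mathcal{F}}(X)\subseteq A$ or $A\cap ess_{\mathcal{F}}(X)=\emptyset$; thus $|ess_{\mathcal{F}^*_{ess}}(Y_{\mathcal{F}})|=|\mathcal{F}|-r'_{\mathcal{F}}$. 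The left half of Proposition \ref{FessEGFinequalities} applied to $\mathcal{F}^*_{ess}$ then yields $|\mathcal{F}|-r'_{\mathcal{F}}+1\leqslant|ess_{\mathcal{F}}(X)|$, i.e. $|\mathcal{F}|\leqslant|ess_{\mathcal{F}}(X)|+r'_{\mathcal{F}}-1$. The decisive extra input is the bound $r'_{\mathcal{F}}\leqslant2$: because all members of $\mathcal{F}$ share the same trace on the non-essential coordinates, any two members with $A\cap ess_{\mathcal{F}}(X)=\emptyset$ must coincide, and likewise any two with $ess_{\mathcal{F}}(X)\subseteq A$; since the hypothesis forces $ess_{\mathcal{F}}(X)\neq\emptyset$ (otherwise $\mathcal{F}^*_{ess}$ would be empty, hence not a set system), no single member can be of both types, giving $r'_{\mathcal{F}}\leqslant2$. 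Combining $|\mathcal{F}|\leqslant|ess_{\mathcal{F}}(X)|+1$ with the left inequality $|ess_{\mathcal{F}}(X)|+1\leqslant|\mathcal{F}|$ forces $|\mathcal{F}|=|ess_{\mathcal{F}}(X)|+1$, and back-substituting into $|\mathcal{F}|\leqslant|ess_{\mathcal{F}}(X)|+r'_{\mathcal{F}}-1$ pins down $r'_{\mathcal{F}}=2$.

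I expect the only genuinely delicate points to be the upper bounds $|\mathcal{F}^*|\leqslant|ess_{\mathcal{F}}(X)|+2$ and $r'_{\mathcal{F}}\leqslant2$; everything else is a matter of correctly identifying the essential domain of the (essential) dual. The mechanism behind both bounds is the elementary observation that all members of a set system agree on the non-essential elements, which is precisely what forbids a second member of either extreme type, and which must be paired with the non-emptiness of $ess_{\mathcal{F}}(X)$ to rule out the degenerate overlap of the two types when computing $r'_{\mathcal{F}}$.
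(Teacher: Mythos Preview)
Your proposal is correct and follows essentially the same route as the paper: apply the left inequality of Proposition~\ref{FessEGFinequalities} to the (essential) dual, identify $|ess_{\mathcal{F}^*}(Y_{\mathcal{F}})|=|\mathcal{F}|-r_{\mathcal{F}}$ (respectively $|\mathcal{F}|-r'_{\mathcal{F}}$), bound $|\mathcal{F}^*|$ (respectively $|\mathcal{F}^*_{ess}|$) from above by $|ess_{\mathcal{F}}(X)|+2$ (respectively $|ess_{\mathcal{F}}(X)|$), and combine with Proposition~\ref{FessEGFinequalities} for $\mathcal{F}$ itself. Your treatment is in fact slightly more careful than the paper's in two places: you spell out why $r'_{\mathcal{F}}\leqslant 2$ (the paper merely asserts it ``can be easily seen''), and you explicitly rule out the degenerate case $ess_{\mathcal{F}}(X)=\emptyset$ via nonemptiness of $\mathcal{F}^*_{ess}$; the invocation of Remark~\ref{pathdensex,ysametypeoutsideess} for the bound $|\mathcal{F}^*|\leqslant|ess_{\mathcal{F}}(X)|+2$ is harmless but not actually needed, since the inequality follows already from the crude count ``at most $|ess_{\mathcal{F}}(X)|$ members from essential $x$ plus at most two trivial ones''.
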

\begin{proof}
\ref{selfanddualPartialcubeinequ})
As usual, let $(Y_{\mathcal{F}},\mathcal{F}^*)$ be the dual system of $(X,\mathcal{F})$. 
For simplifying the notations of the proof, we use $Y$ instead of $Y_{\mathcal{F}}$.
By using Proposition \ref{FessEGFinequalities} and our assumption that the dual system is well-graded, we have 
$|ess_{\mathcal{F}^*}(Y)|+1 \leqslant |\mathcal{F}^*|$.
We remind that each element of $\mathcal{F}^*$ is of the form $\mathcal{A}_x$ for some $x \in X$. Moreover, for every $x \in X \setminus ess_{\mathcal{F}}(X)$ either $\mathcal{A}_x=Y$ or $\mathcal{A}_x=\emptyset$. So we have
$|\mathcal{F}^*| \leqslant |ess_{\mathcal{F}}(X)|+2$.
On the other hand, for every $A \in \mathcal{F}$, $y_{A}$ belongs to $Y \setminus ess_{\mathcal{F}^*}(Y)$ if and only if either $y_A \in \mathcal{A}_x$ for every $x \in X$ or $y_A \not \in \mathcal{A}_x$ for every $x \in X$. This is equivalent to say that either $A=X$ or $A=\emptyset$.
Thus, $|Y \setminus ess_{\mathcal{F}^*}(Y)|=r_{\mathcal{F}}$ where we recall that 
$r_{\mathcal{F}}=|\mathcal{F} \cap \{\emptyset, X\}|$. 
So since $ess_{\mathcal{F}^*}(Y) \subseteq Y$, we have $|Y|- |ess_{\mathcal{F}^*}(Y)|=r_{\mathcal{F}}$.
Hence, we have $|\mathcal{F}|-r_{\mathcal{F}} = |Y|-r_{\mathcal{F}}=|ess_{\mathcal{F}^*}(Y)|$. 
Now combining the above facts, we have
$|\mathcal{F}|-r_{\mathcal{F}} +1 = |ess_{\mathcal{F}^*}(Y)|+1 \leqslant |\mathcal{F}^*| \leqslant |ess_{\mathcal{F}}(X)|+2$
which implies that $|\mathcal{F}| \leqslant |ess_{\mathcal{F}}(X)|+r_{\mathcal{F}}+1$.
Since $(X,\mathcal{F})$ is a well-graded family, again by Proposition \ref{FessEGFinequalities} we have $|ess_{\mathcal{F}}(X)|+1 \leqslant |\mathcal{F}|$.
So $|ess_{\mathcal{F}}(X)|+1 \leqslant |\mathcal{F}| \leqslant |ess_{\mathcal{F}}(X)|+r_{\mathcal{F}}+1$ as desired.
Moreover, if $\mathcal{F}$ contains none of $\emptyset$ or $X$, then $r_{\mathcal{F}}=0$ and $|\mathcal{F}|=|ess_{\mathcal{F}}(X)|+1$ which means that the additionality of $(X,\mathcal{F})$ is $1$.

\vspace{1mm}

\ref{selfandessdualPartialcubeinequ})
The proof is similar to the proof of part \ref{selfanddualPartialcubeinequ}. For sake of clarity we give it in detail.
We recall that $(Y_{\mathcal{F}},\mathcal{F}^*_{ess})$ is the notation for ess-dual system of $(X,\mathcal{F})$. 
Similar to part \ref{selfanddualPartialcubeinequ}, we use $Y$ instead of $Y_{\mathcal{F}}$.
Since the ess-dual system is assumed to be well-graded, by using Proposition \ref{FessEGFinequalities} we have 
$|ess_{\mathcal{F}^*_{ess}}(Y)|+1 \leqslant |\mathcal{F}^*_{ess}|$.
We recall that each element of $\mathcal{F}^*_{ess}$ is of the form $\mathcal{A}_x$ for some $x \in ess_{\mathcal{F}}(X)$. 
So
$|\mathcal{F}^*_{ess}| \leqslant |ess_{\mathcal{F}}(X)|$.
On the other hand, for every $A \in \mathcal{F}$, $y_{A}$ belongs to $Y \setminus ess_{\mathcal{F}^*_{ess}}(Y)$ if and only if either $y_A \in \mathcal{A}_x$ for every $x \in ess_{\mathcal{F}}(X)$ or $y_A \not \in \mathcal{A}_x$ for every $x \in ess_{\mathcal{F}}(X)$. 
This is equivalent to say that either $ess_{\mathcal{F}}(X)\subseteq A$ or $A \cap ess_{\mathcal{F}}(X)=\emptyset$.
So $$|Y \setminus ess_{\mathcal{F}^*_{ess}}(Y)|=|\{A \in \mathcal{F}: ess_{\mathcal{F}}(X) \subseteq A \ or \ A \cap ess_{\mathcal{F}}(X)=\emptyset\}|=r'_{\mathcal{F}}.$$
Hence, since $ess_{\mathcal{F}^*_{ess}}(Y) \subseteq Y$, we have $|Y|- |ess_{\mathcal{F}^*_{ess}}(Y)|=r'_{\mathcal{F}}$.
Therefore, $|\mathcal{F}|-r'_{\mathcal{F}} = |Y|-r'_{\mathcal{F}}=|ess_{\mathcal{F}^*_{ess}}(Y)|$. 
Now combining the above facts we have
$$|\mathcal{F}|-r'_{\mathcal{F}} +1 = |ess_{\mathcal{F}^*_{ess}}(Y)|+1 \leqslant |\mathcal{F}^*_{ess}| \leqslant |ess_{\mathcal{F}}(X)|$$
which implies that $|\mathcal{F}| \leqslant |ess_{\mathcal{F}}(X)|+r'_{\mathcal{F}}-1$.
Since $\mathcal{F}$ is a well-graded family, again by Proposition \ref{FessEGFinequalities} we have $|ess_{\mathcal{F}}(X)|+1 \leqslant |\mathcal{F}|$.
So 
$$|ess_{\mathcal{F}}(X)|+1 \leqslant |\mathcal{F}| \leqslant |ess_{\mathcal{F}}(X)|+r'_{\mathcal{F}}-1.$$
It can be easily seen 
that 
$r'_{\mathcal{F}} \leqslant 2$.
Therefore, one concludes that 
$|\mathcal{F}|=|ess_{\mathcal{F}}(X)|+1$ and $r'_{\mathcal{F}}=2$.
\hfill $\square$

\end{proof}

\vspace{1.5mm}

The following theorem is the main result of this section and characterizes self-and-(ess-) dual well-graded families.

\begin{thm}\label{characterizationselfanddualpathdense}
Assume that $(X,\mathcal{F})$ is a set system. 
Then the following hold.
\begin{enumerate}
\item{The system $(X,\mathcal{F})$ is a self-and-dual well-graded family if and only if it is either a full-chain system or an upward-starlike system or a downward-starlike system. Moreover, every self-and-dual well-graded family has additionality 1.
}\label{selfanddualpathdenseiffchainorstrongstarlike}
\item{The system $(X,\mathcal{F})$ is self-and-ess-dual well-graded  
if and only if $\mathcal{F}$ is a full-chain system. 
Moreover, every self-and-ess-dual well-graded family has additionality 1.
}\label{selfedddualpathdenseiffchain}
\end{enumerate}
\end{thm}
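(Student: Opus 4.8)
The plan is to prove each equivalence by first pinning down the additionality of $\mathcal{F}$ at exactly $1$ and then reading off the structure from the tree-classification already in hand. The backward implications are immediate and I would dispatch them first. For (\ref{selfanddualpathdenseiffchainorstrongstarlike}), if $(X,\mathcal{F})$ is a full-chain, upward-starlike or downward-starlike system then Remark \ref{factsaboutfullchainupwarddownwardstarlike}(\ref{everyupordownstarlikeisselfanddualextremal}) records that it is self-and-dual extremal, hence self-and-dual well-graded. For (\ref{selfedddualpathdenseiffchain}), a full-chain system is self-and-ess-dual well-graded by the $(\Leftarrow)$ half of Lemma \ref{selfanddualpathdensefortreeandsemitree}(\ref{1-inctreesystemsessdualpathdenseiffarepath}) (its one-inclusion graph is a one-way path, so it is well-graded, and that lemma gives ess-dual well-gradedness).

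For the forward direction of (\ref{selfedddualpathdenseiffchain}) the route is short. Applying Lemma \ref{dualselfextpathdenseimpliesalmostequality}(\ref{selfandessdualPartialcubeinequ}) immediately gives $|\mathcal{F}|=|ess_{\mathcal{F}}(X)|+1$, i.e.\ additionality $1$, so by Theorem \ref{characterizeextvc=1withequalities,right=impliesleft=}(\ref{GFistreealllabelsaredifferent}) the graph $G_{\mathcal{F}}$ is a tree with pairwise distinct labels. Since $(X,\mathcal{F})$ is also ess-dual well-graded, the $(\Rightarrow)$ half of Lemma \ref{selfanddualpathdensefortreeandsemitree}(\ref{1-inctreesystemsessdualpathdenseiffarepath}) forces $\mathcal{F}$ to be a full-chain system. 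This part is clean precisely because the ess-dual bound fixes the additionality in one step.

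The substance lies in the forward direction of (\ref{selfanddualpathdenseiffchainorstrongstarlike}), and the real work is proving additionality $=1$. Here Lemma \ref{dualselfextpathdenseimpliesalmostequality}(\ref{selfanddualPartialcubeinequ}) only yields the weaker bound additionality $\leqslant r_{\mathcal{F}}+1\leqslant 3$, where $r_{\mathcal{F}}=|\mathcal{F}\cap\{\emptyset,X\}|$, so additionalities $3$ and $2$ must be excluded separately. The main obstacle is additionality $3$, which by the bound can occur only when $r_{\mathcal{F}}=2$, i.e.\ $\emptyset,X\in\mathcal{F}$. I would resolve this by the observation that in that case every $x\in X$ is shattered as a singleton by the pair $\emptyset,X$, so $X=ess_{\mathcal{F}}(X)$; consequently each dual member $\mathcal{A}_x$ is a proper nonempty subset of $Y_{\mathcal{F}}$, whence $|\mathcal{F}^*|\leqslant|ess_{\mathcal{F}}(X)|$. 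Combining this with $|ess_{\mathcal{F}^*}(Y_{\mathcal{F}})|=|\mathcal{F}|-r_{\mathcal{F}}=|\mathcal{F}|-2$ (computed in the proof of Lemma \ref{dualselfextpathdenseimpliesalmostequality}) and Proposition \ref{FessEGFinequalities} applied to the well-graded dual gives $|\mathcal{F}|-1\leqslant|ess_{\mathcal{F}}(X)|$, so additionality $\leqslant 1$, contradicting additionality $3$. Hence additionality $\leqslant 2$.

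It then remains to rule out additionality $2$. If it held, Theorem \ref{structureofpathdensewithF=ess+2} would make $G_{\mathcal{F}}$ a semitree, and Lemma \ref{selfanddualpathdensefortreeandsemitree}(\ref{dualofsemitreeisnotpathdense}) states that the dual of a semitree system is never well-graded, contradicting dual well-gradedness of $\mathcal{F}$. Therefore the additionality is exactly $1$, so by Theorem \ref{characterizeextvc=1withequalities,right=impliesleft=}(\ref{GFistreealllabelsaredifferent}) the graph $G_{\mathcal{F}}$ is a tree with distinct labels; being moreover dual well-graded, the $(\Rightarrow)$ half of Lemma \ref{selfanddualpathdensefortreeandsemitree}(\ref{1-inctreesystemsdualpathdenseiffarepathorstarlike}) identifies $(X,\mathcal{F})$ as a full-chain, upward-starlike or downward-starlike system, which also yields the asserted additionality $1$ and completes the proof.
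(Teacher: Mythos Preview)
Your proof is correct and follows the paper's architecture closely for part (\ref{selfedddualpathdenseiffchain}) and for the additionality $1$ and $2$ cases of part (\ref{selfanddualpathdenseiffchainorstrongstarlike}). The genuine divergence is in ruling out additionality $3$. The paper, after observing $\emptyset,X\in\mathcal{F}$ and $X=ess_{\mathcal{F}}(X)$, carries out a detailed structural analysis: it builds the one-way path $P$ from $v_{\emptyset}$ to $v_X$ in $G_{\mathcal{F}}$, names the two remaining vertices $v_B,v_C$, locates an edge $v_{A_{i_0}}v_B$ with label $a_{t_0}$ satisfying $t_0\geqslant i_0+2$, and then checks by a three-way case split on $s$ that no $\mathcal{A}_{a_s}$ in the dual lies at Hamming distance $1$ from $\mathcal{A}_{a_{t_0}}$, violating well-gradedness of $\mathcal{F}^*$. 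Your argument bypasses all of this with a pure counting step: from $X=ess_{\mathcal{F}}(X)$ you get $|\mathcal{F}^*|\leqslant |X|=|ess_{\mathcal{F}}(X)|$, and feeding this sharpened bound (in place of the $|ess_{\mathcal{F}}(X)|+2$ used in Lemma \ref{dualselfextpathdenseimpliesalmostequality}) into the chain $|\mathcal{F}|-r_{\mathcal{F}}+1=|ess_{\mathcal{F}^*}(Y_{\mathcal{F}})|+1\leqslant|\mathcal{F}^*|$ gives additionality $\leqslant 1$ directly. This is a cleaner and more robust elimination; the paper's approach, by contrast, yields finer structural information about what $G_{\mathcal{F}}$ would look like in the hypothetical additionality-$3$ case, though that information is not used elsewhere.
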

\begin{proof}
\ref{selfanddualpathdenseiffchainorstrongstarlike})
If a set system is either a full-chain system, 
or a downward-starlike or an upward-starlike, then by Remark
\ref{factsaboutfullchainupwarddownwardstarlike}(\ref{everyupordownstarlikeisselfanddualextremal}), the system is self-and-dual well-graded family.
Now we prove the converse. Assume that the system is a self-and-dual well-graded family.
By Lemma \ref{dualselfextpathdenseimpliesalmostequality}(\ref{selfanddualPartialcubeinequ}), we have 
$|ess_{\mathcal{F}}(X)|+1 \leqslant |\mathcal{F}| \leqslant |ess_{\mathcal{F}}(X)|+r_{\mathcal{F}}+1$ where we recall that 
$r_{\mathcal{F}}=|\mathcal{F} \cap \{\emptyset, X\}|$.
So it is enough to consider the following cases.

\vspace{1mm}

\textit{Case I.} 
In this case we assume that 
$|\mathcal{F}| = |ess_{\mathcal{F}}(X)|+1$.
By equivalence of parts $\ref{pathdenseandleftinequisequ}$ and $\ref{GFistreealllabelsaredifferent}$ of Theorem \ref{characterizeextvc=1withequalities,right=impliesleft=},
$G_{\mathcal{F}}$ is a labelled directed tree with distinct labels.
Now by Lemma \ref{selfanddualpathdensefortreeandsemitree}(\ref{1-inctreesystemsdualpathdenseiffarepathorstarlike}) and the assumption that the dual of our system is well-graded, it follows that the system is either a full-chain system
or an upward-starlike system or a downward-starlike system.

\vspace{1mm}

\textit{Case II.} 
In this case we assume that $|\mathcal{F}| = |ess_{\mathcal{F}}(X)|+2$.
Now by using Theorem $\ref{structureofpathdensewithF=ess+2} (\ref{pathdensewithF=ess+2} \Rightarrow \ref{1-incgraphissemitree}$)
and Lemma $\ref{selfanddualpathdensefortreeandsemitree} (\ref{dualofsemitreeisnotpathdense})$, the dual system cannot be a well-graded family. So this case contradicts the assumptions and is impossible to happen.

\vspace{1mm}

\textit{Case III.} 
In this case we assume that $|\mathcal{F}| = |ess_{\mathcal{F}}(X)|+3$.
Hence, by Lemma \ref{dualselfextpathdenseimpliesalmostequality}(\ref{selfanddualPartialcubeinequ}), we have 
$\emptyset, X \in \mathcal{F}$. So every one element subset of $X$ is shattered by $\mathcal{F}$. Thus, $X=ess_{\mathcal{F}}(X)$. Therefore, $|\mathcal{F}|= |X|+3$ and since $|V(G_{\mathcal{F}})|=|\mathcal{F}|$, we have $|V(G_{\mathcal{F}})|=|X|+3$.
Since the system is a well-graded family, there is a one-way path $P$ in $G_{\mathcal{F}}$ from $v_{\emptyset}$ to $v_X$ with $|X|+1$ vertices. 
Hence, there are exactly two vertices of $G_{\mathcal{F}}$, say $v_B$ and $v_C$ for some $B,C \in \mathcal{F}$, which are outside of the path $P$.
In particular, $B$ and $C$ are different from $\emptyset$ and $X$.
Let $n:=|X|$. Without loss of generality, we may assume that $X=\{a_1,a_2,\ldots,a_n\}$ and vertices of the path $P$ consists of $v_{A_i}$'s (for $i=0,\dots,n$) where $A_0=\emptyset$ and for each $1 \leqslant i \leqslant n$, $A_i:=\{a_j:1 \leqslant j \leqslant i\}$ and $A_i$'s are some members of $\mathcal{F}$. 
So we have $\mathcal{F}=\{A_i:0 \leqslant i\leqslant n\}\cup \{B,C\}$.
Note that for each $1 \leqslant i \leqslant n$, $a_i$ is the label of the edge between $v_{A_{i-1}}$ and $v_{A_i}$ in $P$.
Without loss of generality, assume that $|B| \leqslant |C|$. 
Since the system 
is a well-graded family, there is a one-way path in $G_{\mathcal{F}}$ with $|B|+1$ vertices from $v_{\emptyset}$ to $v_B$. 
So there is a $E \in \mathcal{F}$ such that 
$E \subset B$ and $B$ has exactly one element more than $E$. Since $|E|<|B|\leqslant |C|$, $E$ is different from $B$, $C$ and $X$.
Thus, $E=A_{i_0}$ for some $0 \leqslant i_0 \leqslant n-1$. 
Let $1 \leqslant t_0 \leqslant n$ be such that $a_{t_0}$ is the single element of $B \setminus A_{i_0}$. 
So $v_{A_{i_0}}$ is adjacent to $v_B$ with an edge with label $a_{t_0}$.
Note that $t_0 > i_0$ since otherwise $a_{t_0} \in \{a_j:1 \leqslant j \leqslant i_0\}= A_{i_0}$.
Since $v_{A_{i_0+1}}$ is a vertex of the path $P$ and $v_B$ is not in $P$, $B$ is different from $A_{i_0+1}$. Hence, we have $t_0 \not = i_0+1$ since otherwise
$B=A_{i_0} \cup \{a_{t_0}\}=A_{i_0} \cup \{a_{i_0+1}\}=A_{i_0+1}$. So $t_0 \geqslant i_0+2$.

\vspace{0.75mm}

Now consider the dual system of $\mathcal{F}$.
As explained in Remark \ref{inpartcubedgeswithlabelaiscutsets}, for every $1 \leqslant r \leqslant n$, the set of edges of $G_{\mathcal{F}}$ with label $a_r$ is exactly the cut-set of the edges between $\mathcal{A}_{a_r}^c$ and $\mathcal{A}_{a_r}$ where $\mathcal{A}_{a_r}$ is viewed as a subsets of the vertices of $G_{\mathcal{F}}$ in the way explained in Remark \ref{identifyingY_Fwithsetofverticesof1-inc}. We mention the following facts and use them later.

\vspace{1mm}

(a) For any $1 \leqslant m,r \leqslant n$, vertex $v_{A_m}$ of path $P$ 
belongs to $\mathcal{A}_{a_r}$ if and only if $a_r \in A_m$ if and only if $r \leqslant m$.

\vspace{0.75mm}

(b) Since $a_{t_0} \in B$, we have $v_B \in \mathcal{A}_{a_{t_0}}$. 

\vspace{0.75mm}

(c) Since $B=A_{i_0} \cup \{a_{t_0}\}$, for any $r >i_0$ with $r \not =t_0$, we have $a_r \not \in B$ which follows that $v_B \not \in \mathcal{A}_{a_r}$.

\vspace{1mm}

Now consider $\mathcal{A}_{a_{t_0}}$ as a member of the dual system and analyse it.  
Since the dual system contains other elements except $\mathcal{A}_{a_{t_0}}$ and also is assumed to be a well-graded family, it should contain some $\mathcal{H}$ such that $|\mathcal{A}_{a_{t_0}} \triangle \mathcal{H}|=1$. 
So $\mathcal{H}=\mathcal{A}_{a_s}$ for some $s \not = t_0$.
We divide the situations for $s$ to the following three cases.

\vspace{1mm}

$\bullet$ If $s \leqslant i_0$, then 
by above statement (a), we have
$v_{A_{i_0}},v_{A_{i_0+1}} \in \mathcal{A}_{a_s}$
while since $t_0 \geqslant i_0+2$, 
$v_{A_{i_0}},v_{A_{i_0+1}} \not \in \mathcal{A}_{a_{t_0}}$.

\vspace{1mm}

$\bullet$ If $i_0 < s < t_0$, then by statements (b) and (c), we have $v_B \in \mathcal{A}_{a_{t_0}} \setminus \mathcal{A}_{a_s}$.
Also by statement (a) we have $v_{A_{t_0-1}} \in \mathcal{A}_{a_s} \setminus \mathcal{A}_{a_{t_0}}$.

\vspace{1mm}

$\bullet$ If $s > t_0$, then by statements (b) and (c), we have $v_B \in \mathcal{A}_{a_{t_0}} \setminus \mathcal{A}_{a_s}$. Also by statement (a) we have $v_{A_{t_0}} \in \mathcal{A}_{a_{t_0}} \setminus \mathcal{A}_{a_s}$.

\vspace{1mm}

In each of the above cases we have $|\mathcal{A}_{a_{t_0}} \triangle \mathcal{A}_{a_s}|>1$. 
So such $\mathcal{H}$ does not exist which is a contradiction. Therefore, case III is impossible to happen.

\vspace{1mm}

So far we have shown that only Case I is possible to happen and in that case we have $|\mathcal{F}|=|ess_{\mathcal{F}}(X)|+1$. 
It follows that every self-and-dual well-graded family is either a full-chain system or an upward-starlike or a downward-starlike system and moreover, it has additionality 1.
This completes the proof of part \ref{selfanddualpathdenseiffchainorstrongstarlike}.

\vspace{2mm}

\ref{selfedddualpathdenseiffchain})
If the system is a full-chain system, 
then it is easy to see that it is a self-and-ess-dual well-graded family. 
Now we prove the converse. Since the system is a self-and-ess-dual well-graded family, by Lemma \ref{dualselfextpathdenseimpliesalmostequality}(\ref{selfandessdualPartialcubeinequ}) we have 
$|\mathcal{F}|=|ess_{\mathcal{F}}(X)|+1$. 
So the additionality of the system is 1.
Therefore, since the system is a well-graded family, by 
$G_{\mathcal{F}}$ is a labelled directed tree with distinct labels.
Now by Lemma \ref{selfanddualpathdensefortreeandsemitree}(\ref{1-inctreesystemsessdualpathdenseiffarepath}) and the assumption that the ess-dual of the system is well-graded, it follows that the system is a full-chain system. 
\hfill $\square$
\end{proof}

\vspace{1mm}

The following statement characterizes the self-and-dual extremal (maximum) systems.

\begin{cor}\label{characterizationselfanddualextremalandmaximum}
\begin{enumerate}
\item{A set system is self-and-dual extremal if and only if it is a self-and-dual well-graded family if and only if it is of one of three types of systems mentioned in Theorem \ref{characterizationselfanddualpathdense}(\ref{selfanddualpathdenseiffchainorstrongstarlike}).}\label{characterizationselfanddualextremal}
\item{A set system $(X,\mathcal{F})$ is self-and-dual maximum if and only if either $|\mathcal{F}|=1$ or $\mathcal{F}=\{\emptyset,X\}$.
}\label{characterizationselfanddualmaximum}
\end{enumerate}
\end{cor}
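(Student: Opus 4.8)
The plan is to reduce both parts to the structural dichotomy already proved in Theorem \ref{characterizationselfanddualpathdense}(\ref{selfanddualpathdenseiffchainorstrongstarlike}) and then to sieve the three admissible types through the extremal and maximum conditions. For part \ref{characterizationselfanddualextremal} I would establish the three properties as equivalent by a cycle of implications. If $(X,\mathcal{F})$ is self-and-dual extremal, then $\mathcal{F}$ and $\mathcal{F}^*$ are both extremal, hence both well-graded by Remark \ref{Grecohamdisequalsgraphdis}, so the system is self-and-dual well-graded. By Theorem \ref{characterizationselfanddualpathdense}(\ref{selfanddualpathdenseiffchainorstrongstarlike}) every self-and-dual well-graded family is a full-chain, upward-starlike or downward-starlike system. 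Finally, Remark \ref{factsaboutfullchainupwarddownwardstarlike}(\ref{everyupordownstarlikeisselfanddualextremal}) states that each of these three types is self-and-dual extremal, which closes the cycle and shows the three conditions coincide.

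For part \ref{characterizationselfanddualmaximum} I would first run the forward direction. Since maximum implies extremal (Remark \ref{maximumsareextremal}), a self-and-dual maximum system is self-and-dual extremal, hence by part \ref{characterizationselfanddualextremal} it is one of the three types. Remark \ref{factsaboutfullchainupwarddownwardstarlike}(\ref{noupwarddownwardstarlikeismaximum}) rules out the starlike types, as they are never maximum, so $(X,\mathcal{F})$ must be a full-chain system. If $|\mathcal{F}|=1$ we land in the first alternative, so I assume $|\mathcal{F}| \geq 2$. By Remark \ref{factsaboutfullchainupwarddownwardstarlike}(\ref{fullchainlengthatleast2maximumiff}), maximality of $\mathcal{F}$ forces $X = ess_{\mathcal{F}}(X)$ and the chain to begin at $\emptyset$, so $\emptyset \in \mathcal{F}$.

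The crux of the argument is to show that a maximum full-chain of length at least $2$ whose dual is also maximum must have $|X|=1$. The key observation is that the dual $(Y_{\mathcal{F}},\mathcal{F}^*)$ is again a full-chain by Remark \ref{factsaboutfullchainupwarddownwardstarlike}(\ref{fullchainupwarddownwardstarlikearealmostselfdual}), yet its whole domain cannot be essential: since $\emptyset \in \mathcal{F}$, the symbol $y_{\emptyset} \in Y_{\mathcal{F}}$ lies in $\mathcal{A}_x$ for no $x \in X$ (because $\emptyset$ contains no element), so $y_{\emptyset} \in Y_{\mathcal{F}} \setminus ess_{\mathcal{F}^*}(Y_{\mathcal{F}})$. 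Hence $ess_{\mathcal{F}^*}(Y_{\mathcal{F}}) \neq Y_{\mathcal{F}}$, and Remark \ref{factsaboutfullchainupwarddownwardstarlike}(\ref{fullchainlengthatleast2maximumiff}) then forbids $\mathcal{F}^*$ from being a maximum full-chain of length at least $2$. Since $\mathcal{F}^*$ is maximum by hypothesis, it must have length $1$; a single-member dual means every $x \in X$ yields the same set $\mathcal{A}_x$, i.e. all elements of $X$ share one $\mathcal{F}$-type. As $\mathcal{F}$ is well-graded with $X = ess_{\mathcal{F}}(X)$, Remark \ref{pathdensex,ysametypeoutsideess} prevents two distinct essential elements from having the same type, forcing $|X| \leq 1$; together with $|\mathcal{F}| = |X|+1 \geq 2$ this gives $|X|=1$ and $\mathcal{F} = \{\emptyset, X\}$.

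For the converse I would verify the two listed families directly: any one-member system is $0$-maximum, and a short computation shows its dual is either a single set or the pair $\{\emptyset, Y_{\mathcal{F}}\}$ on the singleton $Y_{\mathcal{F}}$, both of which are maximum; and $\mathcal{F} = \{\emptyset, X\}$ with $|X|=1$ is $1$-maximum with dual a single-member ($0$-maximum) system. The only genuine obstacle in the whole proof is the length bound in part \ref{characterizationselfanddualmaximum}: one must exploit the asymmetry of the dual under the maximum condition, detected through the non-essential vertex $y_{\emptyset}$, since the well-graded and extremal layers alone do not distinguish the longer full-chains; everything else is bookkeeping resting on the theorem and remarks cited above.
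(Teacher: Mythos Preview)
Your proof is correct and follows essentially the same route as the paper: part \ref{characterizationselfanddualextremal} is identical, and for part \ref{characterizationselfanddualmaximum} both arguments reduce to a full-chain via Theorem \ref{characterizationselfanddualpathdense} and Remark \ref{factsaboutfullchainupwarddownwardstarlike}(\ref{noupwarddownwardstarlikeismaximum}), then apply Remark \ref{factsaboutfullchainupwarddownwardstarlike}(\ref{fullchainlengthatleast2maximumiff}) to both $\mathcal{F}$ and $\mathcal{F}^*$. The only cosmetic difference is in the endgame: the paper observes directly that $\mathcal{F}^*$ is a full-chain of length $|\mathcal{F}|-1$ whose smallest member is nonempty, forcing $|\mathcal{F}^*|=1$ and hence $|\mathcal{F}|=2$, whereas you detect the obstruction through the non-essential element $y_{\emptyset}\in Y_{\mathcal{F}}$ and then conclude $|X|=1$; both routes land on $\mathcal{F}=\{\emptyset,X\}$.
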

\begin{proof}
\ref{characterizationselfanddualextremal}) By Remark \ref{Grecohamdisequalsgraphdis}, every self-and dual extremal system is a self-and dual well-graded family. Also by Theorem \ref{characterizationselfanddualpathdense}, every self-and dual well-graded system is either a full-chain system or an upward-starlike or a downward-starlike system. Moreover, by Remark \ref{factsaboutfullchainupwarddownwardstarlike}(\ref{everyupordownstarlikeisselfanddualextremal}), such systems are self-and-dual extremal.

\ref{characterizationselfanddualmaximum})
Let $(X,\mathcal{F})$ be a set system.
If $|\mathcal{F}|=1$, namely $\mathcal{F}=\{A\}$ for some $A \subseteq X$, then
the system is $0$-maximum and it is easy to see that its dual system is a full-chain system of length 1 (if $A=\emptyset$ or $A=X$) or 2 (if $\emptyset \subsetneq A \subsetneq X$)
on a one-element domain. So the dual system is a maximum system of VC-dimension 0 or 1.
If $\mathcal{F}=\{\emptyset,X\}$, then it is easy to see that the system is self-and dual maximum.
For the converse, we assume that $(X,\mathcal{F})$ is a self-and dual maximum system with $|\mathcal{F}|\geqslant 2$. It is enough to show that $\mathcal{F}=\{\emptyset,X\}$.
By remarks \ref{maximumsareextremal} and \ref{Grecohamdisequalsgraphdis}, every self-and dual maximum system is a self-and dual well-graded family which in turn, by Theorem \ref{characterizationselfanddualpathdense}, is either a full-chain system, an upward-starlike system or a downward-starlike system. 
But by Remark \ref{factsaboutfullchainupwarddownwardstarlike}(\ref{noupwarddownwardstarlikeismaximum}), the two latter cases cannot happen and so the system is a full-chain system. Also since $|\mathcal{F}|\geqslant 2$, the length of the chain is at least $2$.
Hence, by Remark \ref{factsaboutfullchainupwarddownwardstarlike}(\ref{fullchainlengthatleast2maximumiff}), $X=ess_{\mathcal{F}}(X)$ and the first member of the chain is $\emptyset$.
Now it is not hard to see that the dual system $(Y_{\mathcal{F}},\mathcal{F}^*)$ is a full-chain system with 
$|\mathcal{F}^*|=|\mathcal{F}|-1$ which starts with a nonempty set. So by assumption that the dual system $(Y_{\mathcal{F}},\mathcal{F}^*)$ is a maximum system and using Remark \ref{factsaboutfullchainupwarddownwardstarlike}(\ref{fullchainlengthatleast2maximumiff}) for  $(Y_{\mathcal{F}},\mathcal{F}^*)$, we have $|\mathcal{F}^*| \leqslant 2$.
Thus, $|\mathcal{F}^*| =1$ which follows that
$|\mathcal{F}|=2$. Therefore, since $X=ess_{\mathcal{F}}(X)$ and $\emptyset \in \mathcal{F}$, it is easy to see that $\mathcal{F}=\{\emptyset,X\}$.
\hfill $\square$
\end{proof}

\begin{rem}
The main results of the paper so far, namely theorems
\ref{characterizeextvc=1withequalities,right=impliesleft=}, 
\ref{structureofpathdensewithF=ess+2} and \ref{characterizationselfanddualpathdense}, hold for the set systems $(X,\mathcal{F})$ which $X$ is an infinite domain but $ess_{\mathcal{F}}(X)$ is finite. 
\end{rem}

\section{Well-gradedness of set systems associated to graphs}\label{partcubeofsetsystemsassociatedtographs}

Various kinds of set systems associated to graphs and certain combinatorial features of them such as their VC-dimension have been investigated in papers such as \cite{KozmaMoranShatteringGraphorientation}
and \cite{KranakisKrizancWoeginger}. 
In this final section, we investigate the properties of well-gradedness, being extremal and being maximum for some of the set systems associated to graphs.
More precisely, in subsections \ref{graphswithpartialcubeneighbourhoodsystems} and \ref{graphswithpartialcubeclosedneighbourhoodsystems}, we use the results of the previous sections in order to characterize the property of being well-graded, extremal or maximum for the set systems of neighbourhoods and closed-neighbourhoods of the graphs.
Moreover, in Subsection \ref{graphswithpartialcubeCliqueIndepsystems}, we do the similar things for the set systems of cliques and 
independent sets of graphs and also investigate the VC-dimension of such systems.

\subsection{Graphs with well-graded families of neighbourhoods}\label{graphswithpartialcubeneighbourhoodsystems}

Let $G$ be an undirected graph, possibly with loops (i.e. edges connecting a vertex to itself).
For any vertices $u$ and $v$ of $G$, we use the notation $u \sim v$ for $u$ and $v$ are adjacent by an edge. Note that if there is a loop on a vertex $v$, then $v$ is adjacent to itself.
We denote the set of vertices adjacent to $v$ by $N_G(v)$ and call it the \textit{neighbourhood} of $v$. Also we denote by $N_G[v]$ the \textit{closed-neighbourhood} of $v$ which is $N_G(v) \cup \{v\}$. 
Let $\mathcal{N}(G):=\{N_G(v):v \in V(G)\}$ and $\mathcal{N}_{cl}(G):=\{N_G[v]:v \in V(G)\}$. We consider the set systems
$(V(G),\mathcal{N}(G))$ and $(V(G),\mathcal{N}_{cl}(G))$ and call them the \textit{neighbourhood-set system} and \textit{closed-neighbourhood-set system} of the graph $G$.
When it is clear from the context, we usually write $N(v)$, $N[v]$, $\mathcal{N}$ and $\mathcal{N}_{cl}$ instead of $N_G(v)$, $N_G[v]$, $\mathcal{N}(G)$ and $\mathcal{N}_{cl}(G)$ respectively.
We call two vertices $v, w$ of $G$ \textit{twin} (\textit{closed-twin}) if $N(v)=N(w)$ ($N[v]=N[w]$). We call a graph \textit{twin-free} (\textit{closed-twin-free}) if it does not have twin (closed-twin) vertices.
We call a vertex of a graph \textit{fully-connected} if it is connected to every other vertex of the graph.
A graph is called \textit{semi-twin-free} if whenever 
two distinct vertices are twin, then both are isolated, where by an isolated vertex we mean a vertex of degree 0.
Similarly, we call a graph \textit{semi-closed-twin-free} if whenever 
two distinct vertices are closed-twin, then both are fully-connected.
Obviously every twin-free graph is semi-twin-free and every closed-twin-free graph is semi-closed-twin-free. We recall from Definition \ref{defdualsystem} that a self-dual system is a system which is isomorphic to its dual system.
Also as defined in Definition \ref{defalmostselfdual}, the purification of any almost self-dual system is isomorphic to its dual system.

\begin{rem}\label{neighbourhoodsystemofgraphsareselfdual}
If $G$ is a 
twin-free undirected graph, then $(V(G),\mathcal{N})$, the neighbourhood-set system of $G$, is self-dual. In general, without having the twin-free assumption, $(V(G),\mathcal{N})$ is almost self-dual (see Definition \ref{defalmostselfdual}).
The same statements hold if one replaces neighbourhood-set system to closed-neighbourhood-set system, $\mathcal{N}$ to $\mathcal{N}_{cl}$ and twin-free to closed-twin-free. 
\end{rem}
\begin{proof}
We define the following map between $(V(G),\mathcal{N})$ and its dual system $(Y_{\mathcal{N}},\mathcal{N}^*)$:
$$f: V(G) \rightarrow Y_{\mathcal{N}}$$
$$v \rightarrow y_{N(v)}.$$
So for every $v \in V(G)$ we have 
$f(N(v))=\{f(w):w \in N(v)\}=\{y_{N(w)}:w \in N(v)\}=\{y_{N(w)}:v \in N(w)\}=\mathcal{A}_{v}.$
It is clear that $f$ is a surjective map on $Y_{\mathcal{N}}$ and moreover, if $G$ is twin-free, then $f$ is injective.
So in this case one can see that $f$ is an isomorphism between two systems which follows that the neighbourhood-set system of $G$ is self-dual.
In general, without having the twin-free assumption on $G$, for any two vertices $v$ and $w$ we have $f(v)=f(w)$ if and only if $v$ and $w$ are twin vertices and one can see that it is equivalent to say that $v \sim_{\mathcal{N}} w$ (as defined in Definition \ref{eqrelhavingsameFtype}).
So $f$ induces a map from $\frac{V(G)}{\sim_{\mathcal{N}}}$ to $Y_{\mathcal{N}}$ and one can see that it gives rise to an isomorphism between $(\frac{V(G)}{\sim_{\mathcal{N}}},\tilde{\mathcal{N}})$ and $(Y_{\mathcal{N}},\mathcal{N}^*)$ where we recall that $(\frac{V(G)}{\sim_{\mathcal{N}}},\tilde{\mathcal{N}})$ is the purification of the set system $(V(G),\mathcal{N})$ as defined in the Definition \ref{systemquotientofERsameFtype}.
Note that a similar proof works if one replaces neighbourhood-set system to closed-neighbourhood-set system, $N(v)$'s to $N[v]$'s, $\mathcal{N}$ to $\mathcal{N}_{cl}$ and twin-free to closed-twin-free. 
\hfill $\square$
\end{proof}

\begin{dfn}\label{def(closed)neighbWGExtMax}
We call a 
graph $G$ \textit{neighbourhood-well-graded} if the neighbourhood-set system of $G$ is a well-graded family.
Similarly, we call $G$ \textit{neighbourhood-extremal} (\textit{neighbourhood-maximum}) 
if the neighbourhood set system of $G$ is an extremal (maximum) set system.
We can also define the notions closed-neighbourhood-well-graded, closed-neighbourhood-extremal and closed-neighbourhood-maximum in a similar way by replacing neighbourhood-set system in the definitions by closed-neighbourhood-set system.
\end{dfn}

\begin{rem}\label{pathdenseimpliessemireduced}
Assume that $G=(V,E)$ is a loopless neighbourhood-well-graded graph. Then $G$ is semi-twin-free. Similarly, if $G$ is a closed-neighbourhood-well-graded graph, then it is semi-closed-twin-free.
\end{rem}
\begin{proof}
We simplify the notations 
and let $(V,\mathcal{N})$ to be the neighbourhood-set system of $G$.
Let $u$ and $w$ be twin vertices. So they have the same $\mathcal{N}$-types (as defined in Definition \ref{eqrelhavingsameFtype}).
Since $(V,\mathcal{N})$ is a well-graded family, by Remark \ref{pathdensex,ysametypeoutsideess} every two vertices with the same $\mathcal{N}$-types are in $V \setminus ess_{\mathcal{N}}(V)$. 
Thus, $u,w \in V \setminus ess_{\mathcal{N}}(V)$. 
So $u$ (and also $w$) either belongs to all $N(v)$'s or to none of them.
But since $u$ does not belong to all $N(v)$'s (for example $u \not \in N(u)$ since $G$ is loopless) then it is isolated. The same holds for $w$ too.
So $G$ is semi-twin-free.

Similarly, assume that $(V,\mathcal{N}_{cl})$ is the closed-neighbourhood-set system of $G$ and $u$ and $w$ are closed-twin vertices. So they have the same $\mathcal{N}_{cl}$-types and by well-gradedness
and Remark \ref{pathdensex,ysametypeoutsideess}, we have 
$u,w \in V \setminus ess_{\mathcal{N}_{cl}}(V)$. Since $u$ 
belongs to at least one member of $\mathcal{N}_{cl}$ (for example $N[u]$), then it belongs to all $N[v]$'s which follows that it is fully-connected. The same holds for $w$ too.
So $G$ is semi-closed-twin-free. \hfill $\square$
\end{proof}

\begin{rem}\label{neighbourhoodpathdenseimpliesselfanddualpathdense}
Assume that $G$ is a loopless neighbourhood-well-graded graph. Then the neighbourhood-set system of $G$ is a self-and-dual well-graded family. Similarly, if $G$ is closed-neighbourhood-well-graded, then its closed-neighbourhood-set system is a self-and-dual well-graded family.
\end{rem}
\begin{proof}
We use the same notations as Remark \ref{neighbourhoodsystemofgraphsareselfdual}.
Assume that $G$ is loopless and neighbourhood-well-graded. We must show that neighbourhood-set system of $G$, namely $(V(G),\mathcal{N})$, is dual well-graded. By Remark \ref{neighbourhoodsystemofgraphsareselfdual}, the dual system of $(V(G),\mathcal{N})$ is isomorphic to $(\frac{V(G)}{\sim_{\mathcal{N}}},\tilde{\mathcal{N}})$.  
So it is enough to show that $(\frac{V(G)}{\sim_{\mathcal{N}}},\tilde{\mathcal{N}})$ is a well-graded family.
By Remark \ref{pathdenseimpliessemireduced}, $G$ is semi-twin-free. 
So if $u \sim_{\mathcal{N}} w$ (which is equivalent to say that $u$ and $w$ are twin), then $u$ and $w$ are isolated.
So $\frac{V(G)}{\sim_{\mathcal{N}}}$ can be obtained from $V(G)$ by identifying all  isolated vertices in $V(G)$ to a single isolated vertex. Also 
$\tilde{\mathcal{N}}$ and $\mathcal{N}$ can be identified and seen as the same. 
Now, having these descriptions of $\frac{V(G)}{\sim_{\mathcal{N}}}$ and $\tilde{\mathcal{N}}$ and by using the assumption that $(V(G),\mathcal{N})$ is a well-graded family, it is easy to see that $(\frac{V(G)}{\sim_{\mathcal{N}}},\tilde{\mathcal{N}})$ is also a well-graded family. A similar argument shows that if $G$ is closed-neighbourhood-well-graded, then its closed-neighbourhood-set system is a self-and-dual well-graded family. \hfill $\square$
\end{proof}

\begin{dfn}\label{defHalfgraphs}
By the half-graph of order $n \geqslant 1$ we mean a bipartite graph with parts consisting of vertices $\{a_1,\ldots,a_n\}$ and $\{b_1,\ldots,b_n\}$ such that one of the following happens:

$\bullet$ For every $1 \leqslant i,j \leqslant n$, $a_i \sim b_j$ if and only if $i \leqslant j$, 

$\bullet$ For every $1 \leqslant i,j \leqslant n$, $a_i \sim b_j$ if and only if $i \geqslant j$.
\end{dfn}

Half-graphs are important combinatorial objects which appear in graph theory and some other fields of mathematics such as arithmetic combinatorics and logic (in particular model theory). 
Note that, as roughly explained in the introduction of the paper, half-graphs have strong connections with Shelah's notion of "stable theories" in the "classification theory" (see \cite{bookshelahclassification}) from model theory.
As another example of applications of half-graphs, one can mention the celebrated Szemeredi regularity lemma which is an important result in combinatorics. In the context of this theorem, 
many interesting connections between the notion of "irregular-pairs" in the theorem and half-graphs have been discovered in combinatorics. The interested reader can refer to for example \cite{ConlonFoxBoundsforgraphregularityandremovallemmas} and \cite{MalliarisShelahReglemforstablegraphs} to see some precise connections between half-graphs and Szemeredi regularity lemma.
In \cite{MalliarisShelahReglemforstablegraphs}, in fact half-graphs and the model theoretic order property are involved 
for analysing the Szemeredi regularity lemma. Such analysis gives rise to some improvements of the theorem for the case of stable graphs in that paper.
Figure 2 illustrates an example of a half-graph.
\begin{center}
\[\begin{tikzpicture}
    \vertex[fill] (b1) at (0,1) [label=above:] {};
    \vertex[fill] (b2) at (1,1) [label=above:] {};
	\vertex[fill] (b3) at (2,1) [label=above:] {};
	\vertex[fill] (b4) at (3,1) [label=above:] {};
	\vertex[fill] (b5) at (4,1) [label=above:] {};
	\vertex[fill] (a1) at (0,0) [label=below:] {};
	\vertex[fill] (a2) at (1,0) [label=below:] {};
	\vertex[fill] (a3) at (2,0) [label=below:] {};
	\vertex[fill] (a4) at (3,0) [label=below:] {};
	\vertex[fill] (a5) at (4,0) [label=below:] {};
	\path
		(a1) edge (b1)
		(a1) edge (b2)
		(a1) edge (b3)				
		(a1) edge (b4)		
    	(a1) edge (b5)
    	
    	(a2) edge (b2)
		(a2) edge (b3)				
		(a2) edge (b4)		
    	(a2) edge (b5)
    	
    	(a3) edge (b3)				
		(a3) edge (b4)		
    	(a3) edge (b5)
    	
    	(a4) edge (b4)		
    	(a4) edge (b5)
    	
    	(a5) edge (b5)
				
			;
\end{tikzpicture}\]
Figure 2: The half-graph of order 5.
\end{center}

We will need the following remark in the proof of the main theorem of this section.

\begin{rem}\label{inchainofneighbsnovibelongtoNvj}
Assume that $N(v_1)\subseteq \ldots \subseteq N(v_n)$ is an increasing chain of neighbourhoods of some vertices in a loopless graph. Then for any $1 \leqslant i,j \leqslant n$ we have $v_i \not \in N(v_j)$.
\end{rem}
\begin{proof} Assume for contradiction that $v_i \in N(v_j)$ for some $i$ and $j$. We consider two cases.
If $j \leqslant i$ then since $N(v_j) \subseteq N(v_i)$, we have $v_i \in N(v_i)$. It follows that $G$ has a loop which is a contradiction. 
Now assume that $i < j$. Since $v_i \in N(v_j)$ we have $v_j \in N(v_i)$ and since 
$N(v_i) \subseteq N(v_j)$ we have $v_j \in N(v_j)$ which again follows that $G$ has a loop which is a contradiction. 
\hfill $\square$
\end{proof}

\vspace{1mm}

The following theorem is the main result of this section and characterizes the neighbourhood-well-graded loopless graphs and relates them to the half-graphs.

\begin{thm}\label{Thmcharacterizeneighbourhoodpathdensegraphs}
Let $G$ be a 
loopless graph. Then $G$ is a neighbourhood-well-graded graph if and only if it is either a nonempty set of isolated vertices or a union of some disjoint half-graphs and a nonempty set of isolated vertices. 
\end{thm}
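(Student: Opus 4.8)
The plan is to reduce to the classification of self-and-dual well-graded families in Theorem~\ref{characterizationselfanddualpathdense} and then decode what each of its three shapes means for the graph. For the forward implication, suppose $G$ is loopless and neighbourhood-well-graded. By Remark~\ref{neighbourhoodpathdenseimpliesselfanddualpathdense} the system $(V(G),\mathcal{N})$ is self-and-dual well-graded, so by Theorem~\ref{characterizationselfanddualpathdense}(\ref{selfanddualpathdenseiffchainorstrongstarlike}) it is a full-chain system, a downward-starlike system, or an upward-starlike system. Two of these collapse immediately under looplessness. If $\mathcal{N}$ is a full chain and $G$ had an edge $uw$, then $u\in N(w)$ and $w\in N(u)$, while comparability, say $N(u)\subseteq N(w)$, forces $w\in N(w)$, a loop; hence $G$ is edgeless, i.e.\ a nonempty set of isolated vertices with $\mathcal{N}=\{\emptyset\}$. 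A downward-starlike system requires $V(G)\in\mathcal{N}$, i.e.\ $N(v)=V(G)$ for some $v$, which again forces the loop $v\in N(v)$; so this case never occurs. The upward-starlike case is the substantial one.

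In that case $\emptyset\in\mathcal{N}$, so the set $I$ of isolated vertices is nonempty, $v_\emptyset$ is the source-centre of the starlike tree $G_\mathcal{N}$, all edge-labels are distinct, and the essential elements are exactly the non-isolated vertices $V_1:=V(G)\setminus I$; since the system is well-graded, distinct vertices of $V_1$ have distinct neighbourhoods (Remark~\ref{pathdensex,ysametypeoutsideess}), so $x\mapsto N(x)$ identifies $V_1$ with the non-$\emptyset$ nodes of $G_\mathcal{N}$ and each $x\in V_1$ is the label of a unique edge. For $x\in V_1$ I would record two wings: $\nu(x)$, the wing carrying the node $N(x)$, and $\lambda(x)$, the wing carrying the edge labelled $x$. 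Reading the labels from $v_\emptyset$ up to a node $N(x)$ gives exactly the elements of $N(x)$, so $x\sim y$ holds iff $y$ is a label below $N(x)$, i.e.\ iff $\lambda(y)=\nu(x)$, and symmetrically $\lambda(x)=\nu(y)$. Propagating this dictionary along a path inside a connected component of $G[V_1]$ shows that $\nu$ takes only two values $W,W'$ on that component: the vertices with neighbourhood on $W$ form one side and those with neighbourhood on $W'$ the other, all edges run between the two sides, and along each wing the neighbourhoods form a full increasing chain whose successive members differ by one element. Writing out these two nested chains yields $N(a_i)=\{b_1,\dots,b_i\}$ on one side, which is precisely the adjacency $a_i\sim b_j\iff j\le i$ of Definition~\ref{defHalfgraphs}; thus the component is a half-graph. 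Distinct components occupy disjoint pairs of wings and disjoint vertex sets, so $G$ is a disjoint union of half-graphs together with the nonempty isolated set $I$.

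The delicate point, and the place where Remark~\ref{inchainofneighbsnovibelongtoNvj} is needed, is to rule out a wing pairing with itself, i.e.\ to show $\nu(x)\neq\lambda(x)$ so that the reconstructed component is genuinely bipartite rather than a degenerate single-wing object. Indeed, if $N(x)$ and the edge labelled $x$ sat on the same wing, then $x$ would appear in the neighbourhood $N(x')$ of a vertex $x'$ whose neighbourhood lies on that same increasing chain, contradicting Remark~\ref{inchainofneighbsnovibelongtoNvj}. I would also check that the induced pairing of wings is a well-defined perfect matching and that no edge of $G$ joins two different components; both follow from the same adjacency dictionary.

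For the converse I would just compute $\mathcal{N}(G)$ directly. The case of isolated vertices alone is trivial, with $\mathcal{N}=\{\emptyset\}$. If $G$ is a disjoint union of half-graphs $H_1,\dots,H_k$ and a nonempty isolated set $I$, then each $H_t$ of order $n_t$ contributes the two nested chains $N(a^t_i)=\{b^t_i,\dots,b^t_{n_t}\}$ and $N(b^t_j)=\{a^t_1,\dots,a^t_j\}$, and every isolated vertex contributes $\emptyset$. Neighbourhoods coming from different parts or different half-graphs are disjoint and nonempty, hence differ by at least two elements, so in the one-inclusion graph the only edges are those inside each chain and those joining $\emptyset$ to the size-one bottom of each chain, and all labels are distinct. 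Therefore $G_\mathcal{N}$ is a starlike tree with source-centre $v_\emptyset$, the vertices of $I$ provide a domain element lying in no member, and the system is upward-starlike, hence well-graded by Remark~\ref{factsaboutfullchainupwarddownwardstarlike}(\ref{everyupordownstarlikesatisfiesTheorem}). This computation also shows why the isolated vertices are indispensable: a half-graph on its own has $\emptyset\notin\mathcal{N}$, and its two chains are then disconnected in the one-inclusion graph.
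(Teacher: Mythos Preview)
Your argument is correct and follows essentially the same route as the paper: both reduce via Remark~\ref{neighbourhoodpathdenseimpliesselfanddualpathdense} and Theorem~\ref{characterizationselfanddualpathdense}, dispose of the downward-starlike and nontrivial full-chain cases by looplessness, and then decode the upward-starlike structure into paired wings that encode half-graphs (your $\nu,\lambda$ bookkeeping is a clean repackaging of the paper's ``accompanying wings''). Two small points to tighten: the displayed equivalence $x\sim y \Leftrightarrow \lambda(y)=\nu(x)$ is only an implication (the label $y$ could sit above $N(x)$ on the same wing), though you only ever use the forward direction; and you should say explicitly why a component exhausts its two wings---any two vertices with $\nu$-value $W$ have nested nonempty neighbourhoods on $W$ and hence share a common neighbour, so they lie in the same component---which is exactly what the paper's more hands-on construction of the accompanying wing makes transparent.
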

\begin{proof} 
$\Leftarrow$) Clearly, the neighbourhood-set system of every graph of the form of a nonempty set of isolated vertices consists of only one member $\emptyset$ and so is a well-graded family. 
Also It is not hard to see that for every graph of the form of a union of some disjoint half-graphs and a nonempty set of isolated vertices, the neighbourhood-set system is an upward-starlike system. Therefore, by Remark \ref{factsaboutfullchainupwarddownwardstarlike}(\ref{everyupordownstarlikesatisfiesTheorem}), the neighbourhood-set system of the graph is again a well-graded family.

\vspace{1mm}

$\Rightarrow$) Let $(V,\mathcal{N})$ be the neighbourhood-set system of $G$, 
where $V=V(G)$ and $\mathcal{N}$ is the family of neighbourhoods. Also assume that 
$(V,\mathcal{N})$ is a well-graded family.
So by Remark \ref{neighbourhoodpathdenseimpliesselfanddualpathdense}, $(V,\mathcal{N})$ is a self-and-dual well-graded family. 
Thus, by Theorem \ref{characterizationselfanddualpathdense}(\ref{selfanddualpathdenseiffchainorstrongstarlike}), $(V,\mathcal{N})$ is either a full-chain or an upward-starlike or a downward-starlike system and also its additionality is 1 which means that
$|\mathcal{N}|=|ess_{\mathcal{N}}(V)|+1$. 
Moreover, we have $|V| \geqslant |\mathcal{N}|$ since every member of $\mathcal{N}$ corresponds to (neighbourhood of) some vertex in $V$. 
Hence, $|V| \geqslant |ess_{\mathcal{N}}(V)|+1$. 
So $V \setminus ess_{\mathcal{N}}(V) \not = \emptyset$. 
Fix an arbitrary $v_0 \in V \setminus ess_{\mathcal{N}}(V)$. Thus, either $v_0$ belongs to every member of $\mathcal{N}$ or to no one.
Since $G$ is loopless, for every $v \in V$, $v \not \in N(v)$. So there is no $v \in V$ which belongs to every member of $\mathcal{N}$. Thus, $v_0$ does not belong to any member of $\mathcal{N}$. Hence, $v_0$ (and every other vertex in $V \setminus ess_{\mathcal{N}}(V)$) is an isolated vertex of $G$.
So $G$ contains a nonempty set of isolated vertices. Also $V \not \in \mathcal{N}$ which follows that $(V,\mathcal{N})$ is not a downward-starlike system.
Furthermore, we have $N(v_0) \in \mathcal{N}$ and $N(v_0)=\emptyset$ which implies that $\mathcal{N}$ contains $\emptyset$. 
Now we show that $(V,\mathcal{N})$ cannot be a full-chain system of length greater than $1$.
Assume for contradiction that $N(v_1) \subseteq \ldots \subseteq N(v_n)$ is a full-chain of elements of $\mathcal{N}$ and contains all members of $\mathcal{N}$ where $n \geqslant 2$. 
By Remark \ref{inchainofneighbsnovibelongtoNvj}, one concludes that 
no member of $\mathcal{N}$ contains $v_2$. It follows that $v_2$ is an isolated vertex. But this is a contradiction since by definition of a full-chain, $N(v_2) \not = \emptyset$.
Therefore, one sees that the only possibility for $(V,\mathcal{N})$ to be a full-chain system is that $\mathcal{N}=\{\emptyset\}$ and $V$ consists of a nonempty set of isolated vertices.

\vspace{1mm}

So far we have shown that $(V,\mathcal{N})$ is not a downward-starlike system and also is not a full-chain system unless $G$ is a set of isolated vertices.
So now we only need to consider the case that $(V,\mathcal{N})$ is an upward-starlike system and see what $G$ looks like in this case.
We recall from Definition \ref{defupwarddownwardstarlike} that each of the maximal 
full-chains starting from element $\emptyset$ in an upward-starlike system 
is called a wing of that system. 
For every wing $W$, by the length of $W$ 
we mean the number of members of $W$ except $\emptyset$. Also we call the set of elements of the domain of the set system which appear in the members of $W$ by the domain of $W$ and denote it by $D(W)$. 
Note that by definition of upward-starlike systems, labels of the edges of the one-inclusion graph of $(V,\mathcal{N})$ are distinct.
It follows that for every two different wings $W_1$ and $W_2$ of $(V,\mathcal{N})$, we have $D(W_1) \cap D(W_2)=\emptyset$.
For two different wings $W_1$ and $W_2$ of $(V,\mathcal{N})$, we say that $W_2$ accompanies $W_1$ if there are 
$\{x_1,\ldots,x_n\}$ and $\{y_1,\ldots,y_n\}$, vertices in $V$ and all distinct, such that members of $W_1$ consist of $\emptyset$ and $N(x_i)$'s, members of $W_2$ consist of $\emptyset$ and $N(y_i)$'s and moreover, for each $1 \leqslant i \leqslant n$, $N(x_i)=\{y_1,\ldots,y_i\}$ and $N(y_i)=\{x_i,\ldots,x_n\}$.
It is not hard to see that if a wing $W_2$ accompanies a wing $W_1$, then $W_1$ accompanies $W_2$ too.
So we say that $W_1$ and $W_2$ accompany each other if one accompanies the other one or equivalently each one accompanies the other one. We will see below that in fact accompanying pairs correspond to half-graph subgraphs of $G$.

\vspace{1mm}

Fix a wing $W$ of $(V,\mathcal{N})$ and assume that it consists of the increasing sets $\emptyset$, $N(v_1), \ldots, N(v_n)$ for some distinct $v_i$'s in $V$ and some $n$. Since $\emptyset \subseteq N(v_1) \subseteq \ldots \subseteq N(v_n)$ is a full-chain,
there are distinct $u_i$'s in $V$ such that $N(v_i)=\{u_1,\ldots,u_i\}$ for every $1 \leqslant i \leqslant n$. 
By using Remark \ref{inchainofneighbsnovibelongtoNvj}, we have $\{u_1,\ldots,u_n\} \cap \{v_1,\ldots,v_n\}=\emptyset$.
Also one can see that for every $1 \leqslant i \leqslant n$, we have $\{v_i,\ldots,v_n\} \subseteq N(u_i)$. 
We claim that $N(u_i)=\{v_i,\ldots,v_n\}$ for every $1 \leqslant i \leqslant n$. 
Assume for contradiction that there is some $h \in N(u_{i_0}) \setminus \{v_{i_0},\ldots,v_n\}$ for some $i_0$.
Note that $h$ belongs to some member(s) of $\mathcal{N}$ for example $N(u_{i_0})$, and does not belong to some other one(s) such as $N(h)$.
Thus, $h \in ess_{\mathcal{N}}(V)$. So by Remark \ref{pathdensex,ysametypeoutsideess},
no other member of $V$ has the same $\mathcal{N}$-type (see Definition \ref{eqrelhavingsameFtype}) as $h$.
Consequently, for every $v \in V$, we have $N(h) \not = N(v)$. 
In particular, $N(h) \not = N(v_i)$ for every $1 \leqslant i \leqslant n$.
Hence, $N(h)$ does not belong to the wing $W$.
It can be seen that since $(V,\mathcal{N})$ is an upward-starlike system, 
every member of $\mathcal{N}$ except $\emptyset$ appears in exactly one wing. 
So $N(h)$ is a member of a wing different from $W$.
Hence, since $u_{i_0}$ belongs to the domain of $W$ and the domains of every two different wings are disjoint (as mentioned earlier), $u_{i_0} \not \in N(h)$.
Thus, $h \not \in N(u_{i_0})$ which is a contradiction.
Therefore, $N(u_i)=\{v_i,\ldots,v_n\}$ for every $1 \leqslant i \leqslant n$.

\vspace{1mm}

There is a unique wing, say $W'$, containing $N(u_1)$ as a member.
Thus, $\{v_1,\ldots,v_n\} = N(u_1) \subseteq \ D(W')$.
So for every $2 \leqslant j \leqslant n$, we have $N(u_j) \subseteq D(W')$ since $N(u_j) \subseteq N(u_1)$. Hence, again since domains of different wings are disjoint and every member of $\mathcal{N}$ except $\emptyset$ appears in exactly one wing, for every $2 \leqslant j \leqslant n$, $N(u_j)$ must belong to the wing $W'$. 
Thus, $\emptyset \subseteq N(u_n) \subseteq N(u_{n-1}) \subseteq \ldots \subseteq N(u_1)$ are some starting members (in the sense of order of inclusion) 
of the wing $W'$. We claim that $W'$ does not have any other member. Assume for contradiction that there exists some $z$ different from $u_i$'s such that $N(z) \in W'$ and $N(z)$ is different from $N(u_i)$'s. 
It is easily seen that $N(z)$ appears after $N(u_1)$ in the order of   inclusions in the wing $W'$ and $N(u_1) \subset N(z)$.
Thus, $v_1 \in N(z)$ (and also any other $v_i$ for $i \leqslant n$).
So $z \in N(v_1)=\{u_1\}$. Hence, $z=u_1$ which is a contradiction with our assumption that $z$ is different from $u_i$'s. Consequently, $W'$ has no more members than what mentioned above and the lengths of $W$ and $W'$ are the same.
Now one sees that $W$ and $W'$ accompany each other.
Also it is easy to see that any wing different from $W$ and $W'$ does not accompany $W$ or $W'$.

\vspace{1mm}

So far, it is shown that starting with any wing, one finds a unique different wing in such a way that they two accompany each other and moreover, 
any other wing accompanies none of them.
Using this, it is not hard to observe that the family of wings of $\mathcal{N}$ can be partitioned into some pairs of wings where in each pair, two wings accompany each other.
Furthermore, it is easy to verify that if one ignores the element $\emptyset$ in the wings,
then these pairs of wings form a partitioning of $\mathcal{N}\setminus \{\emptyset\}$.
It is worth mentioning that although the length of both wings in each pair of accompanying wings are the same, wings of different pairs can have different lengths.
It is easy to see that each pair $(W,W')$ of accompanying wings in the mentioned partitioning, corresponds to a half-graph in $G$ with the order equal to the length of the wings in that pair.
More precisely, ignoring the element $\emptyset$ of $W$ and $W'$, the other members of $W$ and $W'$ in fact form the family of neighbourhoods of a subset of vertices of $G$ such that the induced subgraph on that subset is a half-graph and also every vertex of that half-graph is not adjacent to any vertex outside of that half-graph.
Now it is not difficult to use the above mentioned partitioning of the family of wings to pairs of accompanying wings and conclude that $G$ is the union of some disjoint half-graphs and a nonempty set of isolated vertices.
\hfill $\square$
\end{proof}

\begin{cor}\label{characterizationneighbextremalgraphs}
A loopless graph is neighbourhood-extremal if and only if it is neighbourhood-well-graded if and only if it is of the form described in Theorem \ref{Thmcharacterizeneighbourhoodpathdensegraphs}.
\end{cor}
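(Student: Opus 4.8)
The plan is to prove the chain of equivalences as a cycle of implications among the three properties: (A) $G$ is neighbourhood-extremal, (B) $G$ is neighbourhood-well-graded, and (C) $G$ has the form described in Theorem \ref{Thmcharacterizeneighbourhoodpathdensegraphs}. The equivalence (B) $\Leftrightarrow$ (C) is nothing but the content of Theorem \ref{Thmcharacterizeneighbourhoodpathdensegraphs} itself, so the only genuinely new work is to splice extremality into this chain, i.e.\ to prove (A) $\Rightarrow$ (B) and (C) $\Rightarrow$ (A). The first of these is immediate: by Remark \ref{Grecohamdisequalsgraphdis} every extremal system is a well-graded family, so if the neighbourhood-set system $(V,\mathcal{N})$ of $G$ is extremal then it is in particular well-graded, which is exactly (B).

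For the implication (C) $\Rightarrow$ (A) I would argue directly from the structural dichotomy of Theorem \ref{Thmcharacterizeneighbourhoodpathdensegraphs}, splitting into its two cases. If $G$ is a nonempty set of isolated vertices, then $\mathcal{N}=\{\emptyset\}$ is a one-element system; such a system is $0$-maximum and hence extremal by Remark \ref{maximumsareextremal}. If instead $G$ is a union of disjoint half-graphs together with a nonempty set of isolated vertices, then, as already observed in the ($\Leftarrow$) direction of the proof of Theorem \ref{Thmcharacterizeneighbourhoodpathdensegraphs}, the neighbourhood-set system $(V,\mathcal{N})$ is an upward-starlike system. By Remark \ref{factsaboutfullchainupwarddownwardstarlike}(\ref{everyupordownstarlikeisselfanddualextremal}), every upward-starlike system is (self-and-dual) extremal, so $(V,\mathcal{N})$ is extremal and $G$ is neighbourhood-extremal. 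This closes the cycle (A) $\Rightarrow$ (B) $\Rightarrow$ (C) $\Rightarrow$ (A) and yields all three equivalences at once.

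I do not expect any serious obstacle here, since the corollary is essentially an assembly of Theorem \ref{Thmcharacterizeneighbourhoodpathdensegraphs} with the inclusions $MAX \subseteq EXT \subseteq WG$ recorded in remarks \ref{maximumsareextremal} and \ref{Grecohamdisequalsgraphdis} and the self-duality facts of Remark \ref{factsaboutfullchainupwarddownwardstarlike}. The one point that requires a moment of care is the degenerate case of a pure set of isolated vertices: here the neighbourhood-set system is the singleton $\{\emptyset\}$, which is not upward-starlike, so it must be handled separately as a trivially extremal (indeed maximum) system rather than being folded into the upward-starlike case. Once that case is dispatched, the remaining verification is purely a matter of quoting the cited results.
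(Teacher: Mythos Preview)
Your proposal is correct and follows essentially the same approach as the paper: both close the cycle by using Remark \ref{Grecohamdisequalsgraphdis} for (A)$\Rightarrow$(B), invoking Theorem \ref{Thmcharacterizeneighbourhoodpathdensegraphs} for (B)$\Leftrightarrow$(C), and then handling (C)$\Rightarrow$(A) by treating the degenerate $\{\emptyset\}$ case separately and appealing to Remark \ref{factsaboutfullchainupwarddownwardstarlike} for the upward-starlike case. The only cosmetic difference is that the paper cites part (\ref{everyupordownstarlikesatisfiesTheorem}) of that remark (upward-starlike systems satisfy clause \ref{extofvc1} of Theorem \ref{characterizeextvc=1withequalities,right=impliesleft=}, hence are extremal), whereas you cite part (\ref{everyupordownstarlikeisselfanddualextremal}); either reference works.
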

\begin{proof}
By using Theorem \ref{Thmcharacterizeneighbourhoodpathdensegraphs} and the fact that every extremal system 
is a well-graded family (see Remark \ref{Grecohamdisequalsgraphdis}), it is enough to show that every graph of the form of a nonempty set of isolated vertices or a union of some disjoint half-graphs and a nonempty set of isolated vertices is neighbourhood-extremal. 
But this is also clear since the neighbourhood-system of each of such graphs is either $\{\emptyset\}$ (in the case that $G$ is a nonempty set of isolated vertices) or an upward-starlike system which by Remark \ref{factsaboutfullchainupwarddownwardstarlike}(\ref{everyupordownstarlikesatisfiesTheorem}) is seen that is extremal. \hfill $\square$
\end{proof}

\begin{cor}\label{characterizationneighbmaximumgraphs}
A loopless graph is neighbourhood-maximum if and only if it is a nonempty set of isolated vertices.
\end{cor}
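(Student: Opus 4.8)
The plan is to prove the two implications separately, leaning almost entirely on the structural dichotomy already established in Theorem \ref{Thmcharacterizeneighbourhoodpathdensegraphs} together with the non-maximality of starlike systems recorded in Remark \ref{factsaboutfullchainupwarddownwardstarlike}(\ref{noupwarddownwardstarlikeismaximum}). For the easy direction, suppose $G$ is a nonempty set of isolated vertices. Then every neighbourhood $N_G(v)$ is empty, so $\mathcal{N}(G)=\{\emptyset\}$ is a single-member system. I would simply check that this system has VC-dimension $0$ and that for every $Y\subseteq V(G)$ one has $|\mathcal{N} \cap Y|=1=\binom{|Y|}{0}$, so the Sauer-Shelah inequality (Theorem \ref{SauerShelahLemma}) is an equality for every $Y$; hence by Definition \ref{defmaximumsystems} the system is $0$-maximum and $G$ is neighbourhood-maximum.

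For the converse, suppose $G$ is neighbourhood-maximum. First I would note that a maximum system is extremal and hence a well-graded family by remarks \ref{maximumsareextremal} and \ref{Grecohamdisequalsgraphdis} (equivalently, invoke Corollary \ref{characterizationneighbextremalgraphs}), so $G$ is neighbourhood-well-graded. Theorem \ref{Thmcharacterizeneighbourhoodpathdensegraphs} then forces $G$ into exactly one of two forms: either a nonempty set of isolated vertices, or a union of some disjoint half-graphs together with a nonempty set of isolated vertices. The whole task reduces to ruling out the second alternative. To do this I would recall from the proof of Theorem \ref{Thmcharacterizeneighbourhoodpathdensegraphs} that whenever $G$ contains at least one half-graph, its neighbourhood-set system is an upward-starlike system; then Remark \ref{factsaboutfullchainupwarddownwardstarlike}(\ref{noupwarddownwardstarlikeismaximum}), which says no upward-starlike system is maximum, directly contradicts the hypothesis. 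Consequently only the first alternative survives, and $G$ is a nonempty set of isolated vertices, completing the proof.

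The main (and essentially only) obstacle is the exclusion of the half-graph case, and here the key observation is that the loopless hypothesis guarantees that a nonempty set of isolated vertices is present in the second form as well; such an isolated vertex belongs to no member of $\mathcal{N}(G)$, which is precisely the feature making the associated upward-starlike system non-maximum (its positive VC-dimension would otherwise require every singleton to be shattered). Thus the appeal to Remark \ref{factsaboutfullchainupwarddownwardstarlike}(\ref{noupwarddownwardstarlikeismaximum}) is legitimate and no further case analysis is needed.
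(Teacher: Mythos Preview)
Your proof is correct and follows essentially the same route as the paper: both directions invoke Theorem \ref{Thmcharacterizeneighbourhoodpathdensegraphs} for the converse and exclude the half-graph alternative via the isolated vertex that is not shattered. The only cosmetic difference is that you package this exclusion by citing Remark \ref{factsaboutfullchainupwarddownwardstarlike}(\ref{noupwarddownwardstarlikeismaximum}) (no upward-starlike system is maximum), whereas the paper argues directly that the isolated vertex forces VC-dimension $0$; the underlying idea is identical.
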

\begin{proof}
Clearly a nonempty set of isolated vertices is neighbourhood-maximum. For the converse, assume that $G=(V,E)$ is a neighbourhood-maximum graph. Then by using remarks \ref{maximumsareextremal} and \ref{Grecohamdisequalsgraphdis}, it is neighbourhood-well-graded and by Theorem \ref{Thmcharacterizeneighbourhoodpathdensegraphs}, it is of the form of a nonempty set of isolated vertices or a union of some disjoint half-graphs and a nonempty set of isolated vertices.
Note that each isolated vertex of $G$ cannot be shattered (as a single element subset of $V$) by the neighbourhood-set system. Therefore, since the neighbourhood-set systems is maximum, its VC-dimension should be zero. It follows that there is no half-graph in $G$ and $G$ is of the form of just a nonempty set of isolated vertices since otherwise, one can see that the VC-dimension of the neighbourhood-set system would be larger than zero.
\hfill $\square$
\end{proof}

\subsection{Graphs with well-graded families of closed-neighbourhoods}\label{graphswithpartialcubeclosedneighbourhoodsystems}

\begin{dfn}
A co-half-graph of order $n \geqslant 1$ 
is a graph with vertices $a_1,\ldots,a_n$ and $b_1,\ldots,b_n$ such that $a_i \sim b_j$ for every $i > j$,
$a_i \sim a_j$ and $b_i \sim b_j$ for every $i,j$ with $i \not = j$ and these are the only edges. 
\end{dfn}

For a loopless graph $G$, by $G^c$ we mean a loopless graph with the same vertex set such that two distinct vertices are connected if and only if they are not connected in $G$.
We call $G^c$ the \textit{complement} of $G$.
One can see that for any $n \geqslant 1$, the co-half-graph of order $n$ is exactly the complements of the half-graph of order $n$.

\begin{dfn}
By the \textit{between-full-union} of to graphs $G_1$ and $G_2$ we mean a graph $G_3$ with vertex set consisting of disjoint union of $V(G_1)$ and $V(G_2)$ and $E(G_3)=E(G_1) \cup E(G_2) \cup \{(v,w):v \in V(G_1), w \in V(G_2)\}$.
\end{dfn}

The following theorem characterizes closed-neighbourhood-well-graded, closed-neighbourhood-extremal and closed-neighbourhood-maximum graphs as defined in Definition \ref{def(closed)neighbWGExtMax}.

\begin{thm}\label{Thmcharacterizeclosedneighbourhoodpathdensegraphs}
Assume that $G$ is a 
loopless graph. 
Then $G$ is a closed-neighbourhood-well-graded graph if and only if it is a closed-neighbourhood-extremal graph if and only if it is either a complete graph
or a between-full-union of some co-half-graphs and a complete graph. 
Also all of these are equivalent to say that $G^c$ is a neighbourhood-well-graded graph.
Moreover, $G$ is closed-neighbourhood-maximum if and only if 
$G^c$ is neighbourhood-maximum if and only if $G$ is a complete graph.
\end{thm}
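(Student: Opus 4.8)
The plan is to reduce the entire statement to the open-neighbourhood case already settled in Theorem \ref{Thmcharacterizeneighbourhoodpathdensegraphs} and its corollaries, by passing to the complement graph $G^c$. First I would record the basic identity relating closed neighbourhoods in $G$ to open neighbourhoods in $G^c$. Since $G$ is loopless, for every vertex $v$ we have $N_{G^c}(v) = V(G) \setminus (N_G(v) \cup \{v\}) = V(G) \setminus N_G[v] = (N_G[v])^c$, the complement being taken inside the common domain $V(G) = V(G^c)$. Consequently the neighbourhood-set system of $G^c$ is precisely the complement system (in the sense of Definition \ref{defFbarreverse}) of the closed-neighbourhood-set system of $G$; that is, $\mathcal{N}(G^c) = \overline{\mathcal{N}_{cl}(G)}$ on $V(G)$.

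Next I would invoke Remark \ref{complementofpartcubeispartcube}, which says that a set system is well-graded, extremal or maximum if and only if its complement is. Applying this to $\mathcal{N}_{cl}(G)$ and to $\overline{\mathcal{N}_{cl}(G)} = \mathcal{N}(G^c)$ yields the three key equivalences: $G$ is closed-neighbourhood-well-graded (respectively extremal, maximum) if and only if $G^c$ is neighbourhood-well-graded (respectively extremal, maximum). In particular the claimed equivalence with ``$G^c$ is neighbourhood-well-graded'' is immediate, and the chain ``closed-neighbourhood-well-graded $\Leftrightarrow$ closed-neighbourhood-extremal'' follows by combining these equivalences with Corollary \ref{characterizationneighbextremalgraphs}, which establishes the same coincidence for the open-neighbourhood notions on $G^c$.

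Then I would feed the structural output of Theorem \ref{Thmcharacterizeneighbourhoodpathdensegraphs} and Corollary \ref{characterizationneighbmaximumgraphs} back through the complement. For the well-graded/extremal case, $G^c$ is neighbourhood-well-graded if and only if $G^c$ is either a nonempty set of isolated vertices or a disjoint union of half-graphs together with a nonempty set of isolated vertices. Taking complements turns disjoint union into between-full-union, a nonempty set of isolated vertices into a complete graph, and each half-graph into a co-half-graph (the latter being exactly the stated fact that the co-half-graph of order $n$ is the complement of the half-graph of order $n$). Hence $G = (G^c)^c$ is either a complete graph or a between-full-union of some co-half-graphs and a complete graph, which is precisely the desired description. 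For the maximum case, Corollary \ref{characterizationneighbmaximumgraphs} gives that $G^c$ is neighbourhood-maximum if and only if $G^c$ is a nonempty set of isolated vertices, and complementing this says $G$ is a complete graph.

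The conceptual content is entirely carried by Theorem \ref{Thmcharacterizeneighbourhoodpathdensegraphs}; the only genuine work here is the complement bookkeeping in the last step. The point requiring the most care is verifying that the complement of a disjoint union is the iterated between-full-union of the complements of the pieces, and checking the degenerate cases, in particular that the always-present block of isolated vertices in $G^c$ complements to the complete-graph summand and never disappears, so that every graph in the stated list genuinely arises. Once this dictionary is in place, each ``if and only if'' in the statement is obtained by reading the corresponding equivalence for $G^c$ backwards through complementation.
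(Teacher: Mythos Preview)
Your proposal is correct and follows essentially the same approach as the paper's own proof: both establish the identity $N_G[v]=(N_{G^c}(v))^c$, invoke Remark~\ref{complementofpartcubeispartcube} to transfer each of the three properties (well-graded, extremal, maximum) between $\mathcal{N}_{cl}(G)$ and $\mathcal{N}(G^c)$, and then read off the structural conclusions from Theorem~\ref{Thmcharacterizeneighbourhoodpathdensegraphs} and Corollaries~\ref{characterizationneighbextremalgraphs} and~\ref{characterizationneighbmaximumgraphs} through the complement dictionary (disjoint union $\leftrightarrow$ between-full-union, isolated vertices $\leftrightarrow$ complete graph, half-graph $\leftrightarrow$ co-half-graph).
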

\begin{proof}
First we note that for every vertex $v$ of $G$ we have $N_G[v]=(N_{G^c}(v))^c$. So the set system $(V(G),\mathcal{N}_{cl}(G))$ 
is the same as the set system 
$(V(G),\overline{\mathcal{F}})$ (see Definition \ref{defFbarreverse}) where $\mathcal{F}=\mathcal{N}(G^c)$. 
Note that by Remark 
\ref{complementofpartcubeispartcube}, the system 
$(V(G),\overline{\mathcal{F}})$ is a well-graded family, extremal system or maximum system if and only if the system $(V(G),\mathcal{F})$ is a well-graded family, extremal system or maximum system respectively.
So $G$ is closed-neighbourhood-well-graded (closed-neighbourhood-extremal) if and only if $G^c$ is neighbourhood-well-graded (neighbourhood-extremal). 
But by Corollary \ref{characterizationneighbextremalgraphs},
$G^c$ is neighbourhood-well-graded if and only if $G^c$ is neighbourhood-extremal if and only if $G^c$ is either a nonempty set of isolated vertices or a union of some disjoint half-graphs and a nonempty set of isolated vertices.
So $G$ is closed-neighbourhood-well-graded if and only if it is closed-neighbourhood-extremal if and only if it is the complement of 
such mentioned graphs
or equivalently is either a complete graph or a between-full-union of some co-half-graphs and a complete graph.

Similarly, $G$ is closed-neighbourhood-maximum if and only if $G^c$ is neighbourhood-maximum, which is, by Corollary \ref{characterizationneighbmaximumgraphs}, equivalent to say that $G^c$ is a nonempty set of isolated vertices. It follows that $G$ is closed-neighbourhood-maximum if and only if it is a complete graph.
\hfill $\square$
\end{proof}

\subsection{Graphs with well-graded families of cliques and independent sets}\label{graphswithpartialcubeCliqueIndepsystems}

In this subsection, we consider the set systems associated to graphs obtained by cliques and independent sets of the graphs. 
By a \textit{clique} (\textit{independent set}) in a graph $G$, we mean a subset of vertices of $G$ such that every two distinct vertices of them are adjacent (non-adjacent) in $G$.
We consider $\emptyset$ and every single element subsets of the vertices of $G$ both as a clique and an independent set.
By the \textit{clique-number} (\textit{independence-number}) of $G$, we mean the size of the largest clique (independent set) of $G$.

\begin{dfn}
Let $G=(V,E)$ be a graph. We call a set system $(V,\mathcal{F})$, the set system of cliques (independent sets) of $G$ if elements of $\mathcal{F}$ are the cliques (independent sets) of $G$ where we remind that we consider $\emptyset$ and every single element subsets of $V$ both as a clique and an independent set.
\end{dfn}

We call a set system $(X,\mathcal{F})$ \textit{down-closed} if for every $A \in \mathcal{F}$ and $B \subseteq A$, we have $B \in \mathcal{F}$. One can see that every down-closed set system is an extremal system (and so a well-graded family).
Also it is easy to verify that the VC-dimension of every down-closed set system is the same as the size of the largest member of the system.

\begin{rem}
Let $G=(V,E)$ be a graph and $(V,\mathcal{F})$ be the set system of cliques (independent sets) of $G$. Then the following hold.
\begin{enumerate}
\item{The set system $(V,\mathcal{F})$ is down-closed and therefore an extremal system.
}\label{cliquesystemispartialcube}
\item{The VC-dimension of $(V,\mathcal{F})$ is equal to the clique-number (independence-number) of the graph $G$.}\label{VCofcliquesystemiscliquenumber} 
\end{enumerate} 
\end{rem}
\begin{proof}
\ref{cliquesystemispartialcube}) Assume that $A \in \mathcal{F}$. So $A$ is a clique (independent set) of $G$. Thus, every subset $B \subseteq A$ is also a clique (independent set) of $G$. Hence, $B \in \mathcal{F}$. Therefore, 
$(V,\mathcal{F})$ is down-closed and so an extremal system. 

\vspace{1mm}

\ref{VCofcliquesystemiscliquenumber})
Since $(V,\mathcal{F})$ is down-closed, its VC-dimension is equal to the size of the largest member of $\mathcal{F}$ which is the same as the clique-number (independence-number) of $G$. 
\hfill $\square$
\end{proof}

\vspace{1mm}

It is not very hard to see that for a graph $G$ with clique-number (independence number) at least 2, the set system of cliques (independent sets) is maximum if and only if $G$ is a complete graph (a set of isolated vertices).

\vspace{2.5mm}

\noindent {\bf Acknowledgement}
\vspace{2.5mm}

The author is indebted to Institute for Research in Fundamental Sciences, IPM, for support.

\vspace{5.5mm}

\end{document}